\DeclareMathAlphabet{\pazocal}{OMS}{zplm}{m}{n}
\numberwithin{equation}{section}
\newcommand{\logif}{\text{ \textbf{if} }}
\newcommand{\din}{d^{-}} 
\newcommand{\dout}{d^{+}} 
\newcommand{\Lin}{L^{-}} 
\newcommand{\Lout}{L^{+}}
\newcommand{\Lio}{L^{\pm}}
\newcommand{\IG}{I_{\ell^2(G)}}
\DeclareRobustCommand{\cev}[1]{%
	\mathpalette\do@cev{#1}%
}
\newcommand{\do@cev}[2]{%
	\fix@cev{#1}{+}%
	\reflectbox{$\m@th#1\vec{\reflectbox{$\fix@cev{#1}{-}\m@th#1#2\fix@cev{#1}{+}$}}$}%
	\fix@cev{#1}{-}%
}
\newcommand{\fix@cev}[2]{%
	\ifx#1\displaystyle
	\mkern#23mu
	\else
	\ifx#1\textstyle
	\mkern#23mu
	\else
	\ifx#1\scriptstyle
	\mkern#22mu
	\else
	\mkern#22mu
	\fi
	\fi
	\fi
}
\newcommand{\dist}{{\operatorname{dist}}}
\renewcommand{\Re}{\operatorname{Re}}
\newtheoremstyle{definition_style}
{15pt}
{10pt}
{\itshape}
{}
{\bfseries}
{ }
{\newline}
{}
\theoremstyle{definition}
\newtheorem{Th}{Theorem}[section]
\newtheorem{Prop}[Th]{Proposition}
\newtheorem{Def}[Th]{Definition}
\newtheorem{Thm}[Th]{Theorem}
\newtheorem{Lem}[Th]{Lemma}
\newtheorem{Rem}[Th]{Remark}
\newtheorem{Ex}[Th]{Example}
\newtheorem{Cor}[Th]{Corollary}
\newtheorem{Ass}[Th]{Assumption}
\title[Effective Graph Laplacians and resolvent convergence
]{Di-graphs with tightly connected clusters: Effective 
	Graph 
	Laplacians and resolvent convergence
}
\author{Christian Koke}
\newtheorem{prop}{Proposition}[section]
\begin{document}
	
	\vspace*{-5mm}
	
	\maketitle
	
	\begin{abstract}
		In this note, we study Laplacians on graphs for which connectivity within certain sub-graphs tends to infinity. Our main focus 
		are graphs 	sharing
				a common node set on which
		  edge weights within certain clusters grow to infinity.
		As intra-cluster connectivity increases, we show that the corresponding graph Laplacians converge—in the resolvent sense—to an effective graph Laplacian. This effective limit Laplacian is defined on a coarsened graph, where each highly connected cluster is collapsed into a single node.
		In the undirected setting, the effective Laplacian arises naturally from agregating over tightly connected clusters.
		  In the directed case, the limiting graph structure depends on the precise manner in which connectivity increases; with the corresponding effects mediated by the left and right kernel structure of the Laplacian restricted to high-connectivity clusters.
		Our results shed light on the emergence of coarse-grained dynamics in large-scale networks and contribute to spectral graph theory of directed graphs.
	\end{abstract}
	
	\section{Introduction}\label{intro}
Graphs are a fundamental tool for modeling pairwise interactions in systems across a wide range of disciplines, including biology \cite{barabasi2000large}, physics \cite{watts1998collective}, computer science \cite{cormen2009introduction}, as well as social sciences \cite{wasserman1994social}.
Fundamentally graphs may be distinguished by whether they are \emph{undirected} or \emph{directed}.
While directed graphs are mathematically somewhat more subtle and less well understood than their undirected counterparts \cite{Veerman2017RandomWO,Veerman2020APO}, they are essential for modeling a diverse array of systems. Applications include the structure of the internet \cite{broder2000graph}, social networks \cite{carrington2005models}, food webs \cite{may1973qualitative}, epidemic spread \cite{jombert2011reconstructing}, chemical reaction networks \cite{rao2013graph}, graph-structured databases \cite{angles2008survey}, communication networks \cite{ahlswede2000network}, and coordination in multi-agent systems \cite{fax2004information}.

In many such settings, tightly connected sub graphs -- often referred to as communities or clusters -- emerge and play a central role in shaping the system’s global behavior. Detecting such communities has been shown to be essential for uncovering functional subunits and understanding how local organization drives global dynamics \cite{fortunato2007resolution, newman2006modularity, fortunato2010community, nematzadeh2014optimal, onella2007structure}.

.

Given the prevalence of such hierarchical structure, it 
is natural to investigate
representations of networks in which tightly connected subgraphs are abstracted as single nodes. This process, commonly referred to as coarse-graining \cite{Loukas2019, Safro2015}, not only reduces the complexity of the system but also reveals its large-scale organization \cite{Villegas2023, Poggialini2025},
facilitates scalable computation \cite{10.24963/ijcai.2023/752, 10.1145/3447548.3467256} and provides insight into the mechanisms by which local interactions influence global behavior \cite{PhysRevResearch.4.033196, PhysRevResearch.7.013065}.
In this present paper  we then rigorously address the question: \emph{What is the appropriate effective representation of a network containing such strongly connected clusters?}

\medskip

 To introduce our approach, consider as a motivating example a graph modeling an ensemble of coupled heat reservoirs. From the perspective of heat flow, edge weights then correspond to thermal conductivities. In the limit where the conductivity between two nodes tends to infinity, heat exchange becomes instantaneous, and the nodes equilibrate immediately---effectively behaving as a single unit. 
 
 \medskip
 
Heat flow over a graph is governed by the diffusion equation
$
	\partial_t \psi(t) = L \psi(t)
$, with graph Laplacian $L$ and global-in-time solution $\psi(t) = e^{-t L} \psi_0$. Heuristically, we expect this fast-equilibration behavior to be reflected  in the evolution operator (i.e. the heat kernel): If two nodes are tightly coupled by a very large edge weight '$\beta$', then we expect
$
	e^{-t L_\beta} \rightarrow J_\uparrow e^{-t \underline{L}} J_\downarrow
$ as $\beta \rightarrow \infty$. Here $L_\beta$  is the gaph Laplacian on the original graph.
 $\underline{L}$ is the Laplacian of a coarsened graph in which the tightly connected nodes are collapsed into a single ``supernode''. $J_\downarrow$ and $J_\uparrow$ interpolate between original and reduced graph.

\medskip

In this work, we formalize this intuition by studying the \emph{asymptotic behavior of graph Laplacians} associated with a sequence of weighted graphs $G_\beta$  in which connectivity within specified clusters increases without bound. We show that, under suitable conditions, this sequence of Laplacians converges to a limiting operator: the \emph{effective Laplacian} on a coarsened graph. The structure of this coarsified graph $\underline{G}$ is determined by the original graph $G$ and the 
spectral projection onto the kernel of the restriction of the original graph Laplacian to the tightly knit clusters.
 Instead of establishing convergence of heat-kernels directly, we find it more convenient to instead establish
 \begin{align}\label{res_conv_intro}
 		(L_\beta - z I_{\ell^2(G)})^{-1} \rightarrow  J^\uparrow  (\underline{L} - z I_{\ell^2(\underline{G})})^{-1} J^\downarrow, \quad \text{as $\beta \rightarrow \infty$};
\end{align}
i.e.  convergence of \emph{resolvents}.
This then also implies convergence of heat kernels.

\medskip
 In the  reduced graph $\underline{G}$, each high-connectivity cluster is represented as a single node. Inter-cluster connections on the original $G$ are then aggregated to determine the graph structure of $\underline{G}$ for which the convergence (\ref{res_conv_intro}) holds. The way this aggregation proceeds -- and hence also the resulting graph structure on $\underline{G}$ --
  differs between the undirected and directed settings: 
In the \emph{undirected setting}, we find that the limit graph structure is
insensitive to the specific way in which connectivity within subclusters diverges. 
In the \emph{directed case}, however, the situation is subtler: The structure of the effective graph $\underline{G}$ depends not only on the clusters themselves, but on  \emph{how} exactly connectivity is taken to infinity. This dependence is mediated by the \emph{left and right kernel structures} of the Laplacian restricted to each cluster. These kernel components encode asymmetries and directional flow patterns, introducing a rich variety of possible limiting graphs even for the same cluster partitioning. 

\medskip

Taken together, these results provide a rigorous framework for understanding coarse-graining and effective descriptions for potentially directed networks,
and contribute to the broader development of spectral graph theory in potentially directed and weighted settings.
Our results also have direct implications for spectral methods in clustering, 
community detection and graph based machine learning \cite{GraphScatteringBeyond, limitless, koke2024holonets}.\footnote{Additional in-progress work using methods developed in this current paper have presented at non-archival venues \cite{koke2023resolvnet, koke2024transferability, koke2025multiscale, koke2025graphscale, koke2025incorporating}}

Our paper is organized as follows: Section \ref{background} discusses the general parlance in which our results are formulated. In particular it recalls the notion of in- and out-degree Laplacians and characterizes the  kernels of these operators in terms of the connectivity structure of the corresponding directed graphs. Section \ref{undirected_convergence} then covers convergence results for undirected graphs.  Section \ref{directed_convergence} discusses convergence in the technically more involved directed setting.

	\section{Graphs and Graph Laplacians}\label{background}
	\subsection{Graphs and the notion of Connectedness}\label{graphs_and_connectedness}

The principal objects of our study are weighted and generically directed graphs \cite{anne2018sectoriality,balti2017, balti2017a, Huang2013SelfAdjoint, KostenkoNicolussi2023}):
	
	\begin{Def}
	A (weighted and directed) graph $G$  is a tuple $G = (V, m, E, a)$ where 
		\begin{enumerate}[label=(\roman*), topsep=0pt, itemsep=-1em]	
			\item $V$ is an at most countably-infinite set of vertices,\\
			\item $m: V \rightarrow (0, \infty)$ is a function assigning a positive mass $m(v) > 0$ to each  $v \in V$, \\
			\item 	$E \subseteq V \times V$ is the (locally finite) set  of edges in the graph $G$,\\
	\item  $a: V \times V \rightarrow [0,\infty)$ is a weight function satisfying $a(v,w) > 0$ iff $(v,w) \in E$.
	\end{enumerate}
The transpose $G^\intercal$ of an original graph $G$ is the tuple $G^\intercal = (V, m, E^{\intercal}, a^\intercal)$, with $E^{\intercal} = \{(i,j) \in V \times V: (j,i) \in E\}$ and $a^\intercal(i,j) := a(j,i)$.
A graph $G$ is \textit{undirected}, if $G = G^{\intercal}$.
		\end{Def}
Given a graph with edge set $E$, we will interpret the edge  $(i,j) \in E$ as originating at node $j$ and terminating at node $i$.\footnote{In contrast, e.g. \cite{Veerman2020APO} read edges 'from left to right'; interreting the edge  $(i,j)$  as originating at  $i$.}
We denote by '$i\rightsquigarrow_E j$' the fact that there exists a directed path $p = \{(i,v_{n-1}),(v_{n-1},v_{n-2}),..., (v_1,j)\} \subseteq E$ from $j$ to $i$, along which the weight function $a$ never vanishes. Informally, we say that  node $i$ ``sees"  node $j$.

\medskip

In an \emph{undirected} graph,
 this notion
 is reflexive: If $i \rightsquigarrow_E j$ then clearly also  $j \rightsquigarrow_E i$ simply by traversing the original path
 backwards.
 In the undirected setting, it is thus straightforward to define a notion of connectedness: A graph is connected if all nodes see each other. Similarly the set of connected components is determined, simply by taking the quotient with respect to the equivalence relation induced by $i \rightsquigarrow_E j$:

\begin{Def}\label{connected_components}
	The set of connected components of  an \emph{undirected} graph $G = (V, m, E, a)$
is defined as	$\mathfrak{C}_{E} = V/ \sim_{E}$. 
\end{Def}
It should be noted that the quotient space $\mathfrak{C}_{\hat{E}}$ indeed only depends on the edge set $E$, and not on the actual values $a(i,j) > 0$ takes on these edges. We will hence simply consider $\mathfrak{C}$ as the set of connected components of the graph $(V, E)$; suppressing the edge weight function in our notation.
Below, we will -- for certain graphs -- also write $\underline{V}:= \mathfrak{C}_{E} $ to denote this set of connected components of $(V, E)$.

\medskip

In the \emph{directed} setting, reflexivity of the statement '$j$ sees $i$'
  is lost. Thus '$i \rightsquigarrow_E j$' no longer induces an equivalence relation and the notion of 'connected component' as in Definition \ref{connected_components} becomes vacuous. Here, the notion of \emph{reaches} instead supersedes the concept of connected components \cite{caughveer,veerkumm, Veerman2020APO}:

\begin{Def} \label{def:reaches}
Given a graph $G$, we make the following standard definitions:
	\begin{enumerate}[label=(\roman*), topsep=0pt, itemsep=-1em]	
		\item Let \( i \in V \). The \emph{reachable set} \( R(i) \) is the set of all vertices \( j \in V \) such that there exists a path \( i \rightsquigarrow_E j \). A \emph{reach} \( R \) is a maximal reachable set; i.e. a maximal set $R$ satifying the condition 'there exists at least one node from which all other nodes in $R$ may be reached'.	\\
		\item A \emph{cabal} \( B \subseteq R \) is a subset of vertices from which all other vertices in the reach \( R \) are reachable.\footnote{Note that every reach has exactly one non-empty cabal \cite{Veerman2020APO}.}\\ 
		\item The \emph{exclusive part} \( H \subseteq R \) consists of those vertices in \( R \) that do see nodes of any other reach.\\
		\item The \emph{common part} \( C \subseteq R \) consists of those vertices in \( R \) also see vertices in other reaches.
	\end{enumerate} 
By maximality, 'being in the same reach' is an equivalence relation, determined by the set of edges $E$.  We denote this relation by '$\sim_{{E}}$' and write  $\mathfrak{R}_{{E}} := V/\sim_{E}$ for the set of reaches.
 For certain graphs, we will also use the notation $\underline{V}:= \mathfrak{R}_{E} $ below.
\end{Def}

\medskip

In the undirected setting, the notion of reaches $\mathfrak{R} = \{R\}$ clearly coincides with the concept of connected components $\mathfrak{C} = \{C\}$, so that it is indeed possible to unambiguously use the notation  '$\sim_E$' for either equivalence relation.
As important point of caution, it should however be noted that in the general directed setting, neither the total number of reaches nor the specific reaches themselves are stable under orientation-reversion ($G \mapsto G^{\intercal}$); as evident from Examples \ref{weight_vector_example_writing} \& \ref{weight_vector_example_writing_II} below, and discussed in detail e.g.  in \cite[Section 2]{Veerman2020APO}.

\subsection{Associated Hilbert spaces and Laplacians }
On the graph $G$,  we introduce the space of functions $f: V \rightarrow \mathds{C}$ square summable with respect to the measure $m$ as
\begin{align}
	\ell^2(G) = \left\{ f: V \rightarrow \mathds{C}: \sum\limits_{v \in V}  |f(v)|^2 m(v) < \infty      \right\}.
\end{align}
This is a Hilbert space, with inner product given by
$
	\langle f,g\rangle_{\ell^2(V)} = \sum_{x \in V} \overline{f(x)} g(x) m(x)
	$.
In the generically directed setting, we then introduce the in- and out-degree Laplacians on $G$ via their respective actions on elements $f \in \ell^2(V)$:

\begin{Def}\label{laplacian_definition}
	The \emph{in}-degree ($\Lin$) and \emph{out}-degree ($\Lout$) graph Laplacians are:
	\begin{align}
		[\Lin f](x) &= \frac{1}{m(x)} \left(\din(x)  f(x)   - \sum\limits_{y \in V} a(x,y) f(y) \right),\\
		[\Lout f](x) &= \frac{1}{m(x)} \left(\dout(x)  f(x)   - \sum\limits_{y \in V} a(x,y) f(y) \right).
	\end{align}
\end{Def}
Here we have made use of the in-degree $d^-$ and out-degree $d^+$ as:
\begin{align}
	\din(v) = \sum\limits_{i \in V} a(v,i) , \quad 	\dout(v) = \sum\limits_{j \in V} a(j,v).
\end{align} 
We will here focus on the setting where these operators of Definition \ref{laplacian_definition} are indeed defined on all of $\ell^2(G)$.
To ensure the necessary boundedness, we make the following assumption on interplay of degree and node-mass throughout the paper:
\begin{Ass}\label{well_behavedness_assumption}
	In- and out-degrees are uniformly bounded relative to node-mass:
		\begin{align}
			\exists C > 0 :  \quad	\din(v)	\leq C m(v) \quad \text{and} \quad \dout(v) \leq C m(v), \quad \forall v \in V .
		\end{align} 
\end{Ass}

This assumption generalizes standard boundedness characterizations in the undirected setting \cite{KostenkoNicolussi2023} and indeed ensures that the operators of Definition \ref{laplacian_definition} are bounded:

\begin{Prop}\label{tame_Laplacians}
With $C$ as in Assumption \ref{well_behavedness_assumption}, we have 	$\|\Lin\|, \|\Lout\| \leq 2C$. 
\end{Prop}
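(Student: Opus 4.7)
The plan is to split each of the two Laplacians into a diagonal multiplication operator and an off-diagonal ``averaging'' operator, bound each piece separately, and combine by the triangle inequality. Concretely, I would write
\[
\Lin = D^- - A, \qquad \Lout = D^+ - A,
\]
where $D^{\pm}$ denotes multiplication by $\din(\cdot)/m(\cdot)$ and $\dout(\cdot)/m(\cdot)$ respectively, and $A$ is the off-diagonal piece defined by $[Af](x) = \tfrac{1}{m(x)}\sum_{y \in V} a(x,y) f(y)$.

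For the diagonal operators, Assumption \ref{well_behavedness_assumption} immediately gives $\din(v)/m(v) \le C$ and $\dout(v)/m(v) \le C$, hence $\|D^-\|, \|D^+\| \le C$ as multiplication operators on $\ell^2(G)$.

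The main work is estimating $\|A\|$. I would apply the weighted Cauchy--Schwarz inequality pointwise, using the nonnegative weights $a(x,y)$:
\[
|[Af](x)|^2 \;\le\; \frac{1}{m(x)^2}\Bigl(\sum_{y} a(x,y)\Bigr)\Bigl(\sum_{y} a(x,y)|f(y)|^2\Bigr) \;=\; \frac{\din(x)}{m(x)^2}\sum_{y} a(x,y)|f(y)|^2.
\]
Multiplying by $m(x)$, summing over $x$, and using $\din(x)/m(x)\le C$ gives
\[
\|Af\|_{\ell^2(G)}^2 \;\le\; C\sum_{x,y} a(x,y)|f(y)|^2 \;=\; C\sum_{y}\dout(y)|f(y)|^2 \;\le\; C^2\|f\|_{\ell^2(G)}^2,
\]
by Fubini and the bound $\dout(y)\le C m(y)$. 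Thus $\|A\|\le C$.

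Combining by the triangle inequality, $\|\Lin\|\le \|D^-\|+\|A\|\le 2C$ and $\|\Lout\|\le \|D^+\|+\|A\|\le 2C$, completing the proof. I do not anticipate any real obstacle here: the only subtlety is ensuring the double sum interchange is justified, which follows from nonnegativity of all terms (so Tonelli applies unconditionally), and from observing that the finiteness of $\|Af\|^2$ is guaranteed a posteriori by the final bound $C^2\|f\|^2$.
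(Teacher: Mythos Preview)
Your proposal is correct and follows essentially the same approach as the paper: both split $\Lio = D^\pm - A$, bound the diagonal piece by $C$ directly from Assumption~\ref{well_behavedness_assumption}, and then bound $\|A\|\le C$ using the in- and out-degree controls. The only cosmetic difference is that the paper invokes the Schur test by name (with kernel $k(x,y)=a(x,y)/(m(x)m(y))$ and row/column bounds $K_1=\sup_x \din(x)/m(x)$, $K_2=\sup_y \dout(y)/m(y)$), whereas you carry out the underlying weighted Cauchy--Schwarz and Tonelli argument by hand; these are the same proof at different levels of packaging.
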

\begin{proof}
We will conduct the proof for $\Lin$; the corresponding arguments for the out-degree Laplacian proceed in complete analogy. We can split $\Lin$ as $\Lin = D^{-}-A$ into its diagonal and off-diagonal action. For the action of $D^{-}$, we find:
\begin{equation}
	\|D^{-} f\|^2 = \sum\limits_{x \in V} \frac{1}{m(x)} [\din(x)]^2 |f(x)|^2 \leq C^2   \sum\limits_{x \in V}   |f(x)|^2 m(x)  \leq C^2 \|f\|^2. 
\end{equation}
The remaining off-diagonal operator $A$ acts on $\ell^2(G)$ as an integral operator with kernel 
\begin{align}
	k(x,y) = \frac{a(x,y)}{m(x) m(y)}.
\end{align}
The Schur test (c.f. e.g. \cite[Theorem 5.2]{HalmosSunder1978}) allows us to bound $\|A\| \leq \sqrt{K_1K_2}$, with 
\begin{align}
K_1 &=	\sup\limits_{x \in V} \int_V |k(x,y)| dm(y)= \sup\limits_{x \in V} \left( \sum\limits_{y \in V} \left| \frac{a(x,y)}{m(x) m(y)} \right| m(y) \right) = 
	\sup\limits_{x \in V} \left( \frac{\din(x)}{m(x)} \right),\\
K_2 &=	\sup\limits_{y \in V} \int_V |k(x,y)| dm(x)= \sup\limits_{y \in V} \left( \sum\limits_{x \in V} \left| \frac{a(x,y)}{m(x) m(y)}\right|  m(x) \right) = 
	\sup\limits_{y \in V} \left( \frac{\dout(y)}{m(y)} \right).
\end{align}
Since both $K_1,K_2 \leq C$, we thus find $A \leq \sqrt{C^2} = C$. By a triangle inequality argument it then follows that $\|\Lin\| \leq 2C$. 
\end{proof}

The notions of in- and out-degree Laplacian clearly coincide if $G$
 is undirected. Moreover, since Proposition \ref{tame_Laplacians} guarantees boundedness, it is not hard to verify that if the underlying graph is undirected, \emph{the} graph Laplacian $L = \Lin = \Lout$ then is self-adjoint and positive semi-definite. In this setting, the Laplacian is then canonically associated to the Dirichlet-form $\mathcal{E}_G$ determined by $G$ \cite{KostenkoNicolussi2023}:
\begin{Def}
	The Dirichlet form of an undirected graph $G$ is defined as 
	\begin{align}\label{dirichlet_form}
		\mathcal{E}_G(f,f) :=  \sum\limits_{(i,j) \in E} a(i,j) |f(i) - f(j)|^2 = \langle f, L f\rangle_{\ell^2(G)}.
	\end{align}
\end{Def}

\subsection{Kernel structure of graph Laplacians  }
\label{chap:kernels}
We are interested in the effective Laplacian describing a graph in the limit of very large connectivity within certain clusters. As we will see in Section \ref{directed_convergence} this reduced oeprator will crucially depend on the kernel structure of the orginal Laplacian restricted to these tightly knit clusters. We will here thus briefly charaterize  the structure of these kernels.

\subsubsection{Kernels of Laplacians on undirected Graphs}
In the undirected setting, $L$ is self-adjoint, which makes the study of its kernel straightforward. With $\mathfrak{C}$ the set of connected components of a graph $G$,  $\chi_C$ the characteristic function of the connected component $C \subseteq V$,  and the cluster mass $\underline{m}(C) = \sum_{v \in C} m(v)$, we have the following:
\begin{prop}
	The kernel of $L$ on $G = (V, m, E, a)$ is given as 
	\begin{align}
	\ker(L) = \overline{\operatorname{span} \left\{ \chi_C : C \in \mathfrak{C} \text{ and } \underline{m}(C) < \infty \right\}}.
	\end{align}
\end{prop}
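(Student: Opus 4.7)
My strategy is to reduce the kernel characterization to an energy identity via the Dirichlet form, then treat the $\ell^2$ integrability constraint component by component.

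\emph{Step 1 (Energy identity).} Since $G$ is undirected, the text established that $L$ is bounded, self-adjoint and positive semi-definite, with $\langle f, Lf\rangle_{\ell^2(G)} = \mathcal{E}_G(f,f)$. Positivity gives $Lf = 0$ if and only if $\langle f, Lf\rangle_{\ell^2(G)} = 0$; this is the standard argument $\|L^{1/2}f\|^2 = \langle f, Lf\rangle$. Hence
\[
f \in \ker(L) \iff \sum_{(i,j)\in E} a(i,j)\,|f(i)-f(j)|^2 = 0 \iff f(i) = f(j) \text{ whenever } a(i,j) > 0.
\]
Propagating this equality along paths, $f$ must be constant on every connected component $C \in \mathfrak{C}$; conversely, any such function annihilates the Dirichlet form.

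\emph{Step 2 (Integrability forces vanishing on infinite-mass components).} Writing $f = \sum_{C \in \mathfrak{C}} c_C\,\chi_C$ as a pointwise sum of component-constants, the $\ell^2$ norm is
\[
\|f\|_{\ell^2(G)}^2 = \sum_{C \in \mathfrak{C}} |c_C|^2\,\underline{m}(C).
\]
Finiteness of this sum forces $c_C = 0$ whenever $\underline{m}(C) = \infty$. Thus $\ker(L)$ coincides exactly with the set of functions constant on each connected component and vanishing on every connected component of infinite mass.

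\emph{Step 3 (Closure of the span).} Let $\mathfrak{C}_{\mathrm{fin}} = \{C \in \mathfrak{C} : \underline{m}(C) < \infty\}$ and let $V_0$ denote $\operatorname{span}\{\chi_C : C \in \mathfrak{C}_{\mathrm{fin}}\}$. Each $\chi_C$ with $C \in \mathfrak{C}_{\mathrm{fin}}$ lies in $\ell^2(G)$ and in $\ker(L)$, so $V_0 \subseteq \ker(L)$, and since $\ker(L)$ is closed (being the kernel of a bounded operator), $\overline{V_0} \subseteq \ker(L)$. Conversely, for $f \in \ker(L)$ with coefficients $(c_C)_{C \in \mathfrak{C}_{\mathrm{fin}}}$ as in Step 2, enumerate $\mathfrak{C}_{\mathrm{fin}} = \{C_1, C_2, \dots\}$ and set $f_N := \sum_{n=1}^N c_{C_n}\,\chi_{C_n} \in V_0$. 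Then
\[
\|f - f_N\|_{\ell^2(G)}^2 = \sum_{n > N} |c_{C_n}|^2\,\underline{m}(C_n) \xrightarrow{N \to \infty} 0
\]
by the tail estimate for the convergent series in Step 2, establishing $f \in \overline{V_0}$ and finishing the proof.

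\emph{Expected obstacle.} The only subtlety I anticipate is handling arbitrary (possibly uncountable) collections of connected components: the pointwise decomposition in Step 2 must be justified without assuming $\mathfrak{C}$ is countable. This is painless, however, because the square-summability condition $\sum_C |c_C|^2 \underline{m}(C) < \infty$ forces at most countably many nonzero $c_C$, so the enumeration in Step 3 is legitimate. All other steps reduce to routine Hilbert-space arguments.
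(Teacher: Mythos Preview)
Your proof is correct and follows essentially the same route as the paper: use positivity to reduce $Lf=0$ to vanishing Dirichlet energy, conclude that $f$ is constant on each connected component, and then invoke square-summability to force vanishing on infinite-mass components. Your Step~3 spells out the closure argument more explicitly than the paper does, and your ``expected obstacle'' is moot here since the paper assumes $V$ is at most countable (hence so is $\mathfrak{C}$), but none of this deviates from the paper's approach.
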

\begin{proof}
Clearly $\operatorname{span} \left\{ \chi_C : C \in \mathfrak{C} \text{ and } m(C) < \infty \right\} \subseteq \ker(L)$ and $\ker(L)$ is closed, so that "$\supseteq
$"  is proved.
To also prove "$\subseteq$", let now $f \in \ker(L)$ be arbitrary.  Since $f \in \ker(L)$,  clearly also $\langle f, L f \rangle_{\ell^2(G)} = 0$, and hence the Dirichlet form (\ref{dirichlet_form}) vanishes on $f$. Thus we see, that a necessary condition for $f$ to be in the kernel of $L$ is, that $f$ is constant on each connected component of $G$, as otherwise $\langle f, L f \rangle_{\ell^2(G)} = \mathcal{E}_G(f,f) > 0$. Suppose now there is a connected component $C$ of $G$ on which the restriction $f{\restriction_C} = \lambda \neq 0$ does not vanish. Then $|\lambda|^2 \cdot m(C) \leq \|f\|^2_{\ell^2(G)} < \infty$ and hence the mass of this connected component is finite.
\end{proof}
Since $L$ is self-adjoint, the left-kernel of $L$ is simply the transpose of the right kernel of $L$ (i.e. the image 
of the right-kernel 
under the Riesz isomorphism).
Moreover for finite graphs, the algebraic and geometric multiplicities of  $0 \in \sigma(L)$ clearly coincide and the dimension of the corresponding kernel of $L$ is equal to the number of connected components of $G$  \cite{chung1997spectral}.
\medskip

\subsubsection{Left- and Right-Kernels of Laplacians on finite directed graphs}\label{directed_kernels}
In contrast to the well established undirected setting, spectral graph theory for directed graphs  is currently still in development \cite{ Veerman2017RandomWO, Veerman2020APO, chung2005laplacians, li2012digraph}. 
 As we will see in Section \ref{directed_convergence} below, it will be sufficient for our purposes to merely understand the kernel structure of \emph{finite} directed graph Laplacians. For the in-degree Laplacian on graphs without node-weights ($m \equiv 1$) the kernel structure has recently been investigated in  \cite{Veerman2017RandomWO,Veerman2020APO}.
 Here we will thus review and extend these results. 
 We may already note
that since in $L^\pm$ is no longer self-adjoint, its right- and left- kernels generically no longer agree. Furthermire, while now the notion of \emph{reaches} suprsedes the concept of connected components, the relationship between connectivity and kernel structure becomes somwehat more involved: While  dimension and structure of $\ker(\Lio)$ are still tied to the reaches of $G$, the exact  relationship is  somewhat more subtle and dependent on whether in- or out-degree Laplacians are considered.
We will thus  consider 
these two
settings separately:

\medskip

\noindent \textbf{In-degree Laplacian $\Lin$:} For the right kernel of the in-degree Laplacian, we have:
\begin{Prop}\label{right_kernel_Lin}\cite{Veerman2020APO}
	Let $G$ be a finite digraph with reaches $ \mathfrak{R} = \{R\}$. 
	A basis of the  \emph{right} kernel of an in-degree Laplacian $\Lin$ is given by 
	 $\{\vec \gamma^{-}_{R}\}_{R \in \mathfrak{R}} \subseteq  \ell^2(G)$ , where:
	\begin{align}
		\left\{\begin{matrix}
			\vec	\gamma^{-}_{R}(j)  =1 & \logif & j\in H(R) &\textrm{(exclusive)}\\
			\vec	\gamma^{-}_{R}(j) \in (0,1) & \logif & j\in C(R) &\textrm{(common)}\\
			\vec	\gamma^{-}_{R}(j)=0 & \logif & j\not\in R &\textrm{(not in reach)}\\
		\end{matrix} \right.
	\end{align}
The values $\vec	\gamma^{-}_{R}(j) \in (0,1) $ for $j$ in the common part $C(R)$ may be calculated as 
$
		\vec{\gamma}^{-}_{R}{\restriction_{C(R)}} =   [\chi_{C(R)}\cdot\Lin{\restriction_{C(R)}} ]^{-1} [\chi_{C(R)}\cdot\Lin\chi_{H(R)}].
		$
Furthermore for all $j \in V$ arbitrary, we have $\sum_{R \in \mathfrak{R}}\,	\vec\gamma^{-}_{R}(j) = 1$. 
	\end{Prop}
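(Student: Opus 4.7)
The plan is to verify the proposition in four stages: well-definedness of the common-part formula, kernel membership, basis property, and the sum identity. For well-definedness, I would first show that $\chi_{C(R)} \Lin \chi_{C(R)}$ is invertible so the defining formula for $\vec\gamma^{-}_R|_{C(R)}$ makes sense. The row sums of this restricted block equal $\sum_{y \notin C(R)} a(x,y) \geq 0$, giving weak diagonal dominance; strict dominance holds at those $x \in C(R)$ receiving edges from outside $C(R)$ (from $H(R)$ or from other reaches). Since every $x \in C(R)$ is connected upstream within $R$ to such a node---the cabal $B(R)$ lies in $H(R)$ and all of $R$ is reachable from it---a Perron--Frobenius argument, or equivalently a maximum principle, forces invertibility.

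Next, I would verify $\Lin \vec\gamma^{-}_R = 0$ pointwise using the equivalent condition $\din(x) \vec\gamma^{-}_R(x) = \sum_y a(x,y) \vec\gamma^{-}_R(y)$, split into three cases. If $x \notin R$ and $a(x,y) > 0$, then the edge $y \to x$ combined with $y \in R$ would force $x$ into the reachable set of $B(R)$, contradicting the maximality of $R$; hence all contributing neighbors satisfy $\vec\gamma^{-}_R(y) = 0$. If $x \in H(R)$ and $a(x,y) > 0$, then $x$ sees $y$ via the edge, so $y \in R$; moreover, if $y$ were in $C(R)$, concatenating the edge $y \to x$ with a path from some node in another reach to $y$ would violate $x \in H(R)$, so $y \in H(R)$ and $\vec\gamma^{-}_R(y) = 1 = \vec\gamma^{-}_R(x)$, closing the equation. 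For $x \in C(R)$ the kernel equation is precisely the displayed linear system.

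For the basis property, disjointness of the supports on $\{H(R)\}_R$ gives linear independence: if $v \in H(R) \cap R'$ with $R' \neq R$, then $v$ is reached from $B(R') \subseteq R' \setminus R$, so $v$ sees a node outside $R$, contradicting $v \in H(R)$. For spanning, take $f \in \ker(\Lin)$; the kernel equation makes $f(x)$ a convex combination of upstream values, so since nodes in $H(R)$ only receive edges from $H(R)$, the restriction $f|_{H(R)}$ is harmonic on the induced subgraph. A maximum principle---exploiting that every node of $H(R)$ is downstream of $B(R) \subseteq H(R)$---forces $f|_{H(R)} \equiv \lambda_R$ for some constant $\lambda_R$. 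Then $g := f - \sum_R \lambda_R \vec\gamma^{-}_R$ lies in $\ker(\Lin)$, vanishes on every $H(R)$, and invertibility from the first stage yields $g|_{C(R)} = 0$; since $V = \bigcup_R R$, we conclude $g = 0$. Finally, the sum formula follows from $\vec 1 \in \ker(\Lin)$ (row sums vanish): expanding $\vec 1 = \sum_R c_R \vec\gamma^{-}_R$ and evaluating at any $j \in H(R_0)$ yields $c_{R_0} = 1$.

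The main obstacle is the maximum-principle step establishing that $f|_{H(R)}$ is constant. Making this rigorous requires exploiting the cabal structure---that every node of $H(R)$ is downstream of $B(R) \subseteq H(R)$---together with a uniqueness argument for the harmonic equation on a finite directed subgraph. A secondary technical point is the invertibility of the restricted Laplacian on $C(R)$, which requires a careful upstream connectivity argument linking each node of $C(R)$ to an entry point admitting strict diagonal dominance.
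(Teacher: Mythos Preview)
Your proof plan is correct and complete in its structure, but it takes a substantially more elaborate route than the paper. The paper's proof consists of two sentences: it cites \cite[Theorem 5.1]{Veerman2020APO} for the case $m \equiv 1$, and then observes that since $\Lin = M^{-1}(D^- - A)$ with $M$ the diagonal node-mass matrix, left-multiplication by the invertible matrix $M^{-1}$ does not alter the right kernel, so the Veerman result transfers verbatim to arbitrary positive node weights.

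By contrast, you reconstruct the Veerman argument from scratch: invertibility of the restricted block on $C(R)$ via weak diagonal dominance and upstream connectivity to $H(R)$, pointwise verification of $\Lin \vec\gamma^{-}_R = 0$ split by the partition $V = (V\setminus R) \cup H(R) \cup C(R)$, linear independence from disjointness of the $H(R)$, spanning via a maximum principle forcing $f|_{H(R)}$ constant, and the partition-of-unity identity from $\vec 1 \in \ker(\Lin)$. This is the standard proof strategy in the cited reference, so your approach is not really different from Veerman's---it is Veerman's proof. What you did \emph{not} isolate is the one new observation the present paper needs, namely that passing from $m\equiv 1$ to general $m$ is trivial for the right kernel; this is the only content of the paper's own proof. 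Your self-contained argument is more informative for a reader without access to \cite{Veerman2020APO}, but for the purposes of this paper the two-line reduction is all that is required.
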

The fact that $\sum_{R \in \mathfrak{R}}\,	\vec\gamma^{-}_{R}(j) = 1$  exactly ensures that the elements of the basis $\{\vec \gamma^{-}_{R}\}_{R \in \mathfrak{R}}$ constitute a (non-negative) resolution of the identity in $\ell^2(G)$.  Moreover, we note that $[\chi_{C(R)}\cdot\Lin\chi_{H(R)}]$ arises from applying $\Lin$ to the characteristic function of the exclusive part $H(R)$ and subsequently restricting the result to the common part $C(R)$ via  scalar ('entry-wise') multiplication with the characteristic function of the common part $C(R)$. The operator $[\chi_{C(R)}\cdot\Lin{\restriction_{C(R)}} ]$ constitutes the restriction of $\Lin$ to elements supported on the common part $C(R)$, post-composed with an orthogonal projection onto the common part again. It is invertible by \cite{Veerman2020APO}[Theorem 5.1].
\begin{proof}[Proof of Proposition \ref{right_kernel_Lin}]
For the case of node weights constant to one ($m \equiv 1$), this was proved in \cite[Theorem 5.1]{Veerman2020APO}. Varying node-weights however does not modify the right kernel of $\Lin$, as is evident from Definition \ref{laplacian_definition}. Hence the corresponding results also apply here.
	\end{proof}

	To proceed further and characterize the left kernel, we recall that a directed tree, is a directed graph where each vertex has at most one incoming edge, and the underlying undirected graph is both acyclic and connected (i.e., it forms a tree). When tracing the edges backwards from any vertex, we eventually reach the same vertex, known as the root. If the connectivity condition is removed, we obtain a directed forest, which is simply a disjoint union of directed trees. A \emph{spanning tree} of a (sub-)graph is a tree which visits each node in this (sub-)graph exactly once. Derived from this, is the notion of the \emph{weight-vector} of  a reach, which below we will use give a characterization of the \emph{left}-kernel structure in the in-degree setting:

	\begin{Def}\label{weight_vector} Let $\mathcal{T}^R_i$ be the set of all spanning trees of the reach $R$ that are rooted at node $i\in R$. Let  $\tau_i$ be such a spanning tree beginning at node $i$. We define the weight-vector of the reach $R$ as 
		$
		w_R(i) = \sum\limits_{\tau_i \in \mathcal{T}^R_i} \prod\limits_{(x,y)\in\tau_i} a(x,y).
		$
	\end{Def}

\begin{Ex}\label{weight_vector_example_writing}
As an example, let us consider the networks of Fig.~\ref{weight_vector_example}:
The graph $G_1$ has a single reach, with a sole spanning tree, which is centered at node '$a$'.  
For the weight vector $\omega_R$, we find $\omega_R(a) = \alpha \cdot \gamma \cdot \delta$ and 
$\omega_R(b) = \omega_R(c) = \omega_R(d) = 0$. 
Graph $G_2$ has two reaches, which we call $R = \{a,b\}$ and $S = \{c,d\}$. 
Each reach has one spanning tree, rooted at nodes $a$ and $c$ respectively. 
For the corresponding weight vectors, we thus find 
$\omega_R(a) = \alpha$, 
$\omega_R(b) = \omega_R(c) = \omega_R(d) = 0$, and 
$\omega_S(c) = \delta$, 
$\omega_S(a) = \omega_S(b) = \omega_S(d) = 0$.
Finally, graph $G_3$ has one reach and three spanning trees; all rooted at node '$a$'. 
Thus we find
$
\omega_R(a) = \alpha \cdot \eta \cdot \gamma  
+ \alpha \cdot \gamma \cdot \delta    
+ \alpha \cdot \rho \cdot \delta, \quad 
\omega_R(b) = \omega_R(c) = \omega_R(d) = 0.
$
\end{Ex}

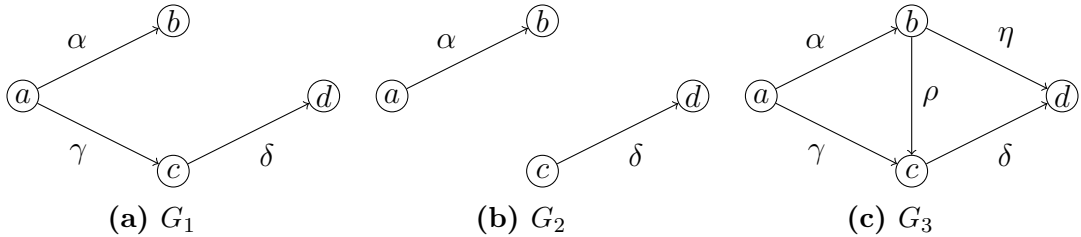
\begin{figure}[H]
	\centering
	\begin{subfigure}[b]{0.25\textwidth}
		\centering
		\begin{tikzpicture}
			\node[draw, circle, minimum size=12pt, inner sep=0pt] (a) at (0,1) {$a$};
			\node[draw, circle, minimum size=12pt, inner sep=0pt] (b) at (2,2) {$b$};
			\node[draw, circle, minimum size=12pt, inner sep=0pt] (c) at (2,0) {$c$};
			\node[draw, circle, minimum size=12pt, inner sep=0pt] (d) at (4,1) {$d$};
			
			\draw[->] (a) -- node[above left] {$\alpha$} (b);
			\draw[->] (a) -- node[below left] {$\gamma$} (c);
			\draw[->] (c) -- node[below right] {$\delta$} (d);
		\end{tikzpicture}
		\caption{$G_1$}
	\end{subfigure}
	\hspace{0.05\textwidth}
	\begin{subfigure}[b]{0.25\textwidth}
		\centering
		\begin{tikzpicture}
			\node[draw, circle, minimum size=12pt, inner sep=0pt] (a) at (0,1) {$a$};
			\node[draw, circle, minimum size=12pt, inner sep=0pt] (b) at (2,2) {$b$};
			\node[draw, circle, minimum size=12pt, inner sep=0pt] (c) at (2,0) {$c$};
			\node[draw, circle, minimum size=12pt, inner sep=0pt] (d) at (4,1) {$d$};
			
			\draw[->] (a) -- node[above left] {$\alpha$} (b);
			\draw[->] (c) -- node[below right] {$\delta$} (d);
		\end{tikzpicture}
		\caption{$G_2$}
	\end{subfigure}
	\hspace{0.05\textwidth}
	\begin{subfigure}[b]{0.25\textwidth}
		\centering
		\begin{tikzpicture}
			\node[draw, circle, minimum size=12pt, inner sep=0pt] (a) at (0,1) {$a$};
			\node[draw, circle, minimum size=12pt, inner sep=0pt] (b) at (2,2) {$b$};
			\node[draw, circle, minimum size=12pt, inner sep=0pt] (c) at (2,0) {$c$};
			\node[draw, circle, minimum size=12pt, inner sep=0pt] (d) at (4,1) {$d$};
			
			\draw[->] (a) -- node[above left] {$\alpha$} (b);
			\draw[->] (a) -- node[below left] {$\gamma$} (c);
			\draw[->] (c) -- node[below right] {$\delta$} (d);
			\draw[->] (b) -- node[right] {$\rho$} (c);
			\draw[->] (b) -- node[above right] {$\eta$} (d);
		\end{tikzpicture}
		\caption{$G_3$}
	\end{subfigure}
	\caption{Example graphs with different reaches on the same node set}
	\label{weight_vector_example}
\end{figure}

In \cite[Theorem 1]{sahi2014harmonic}, it was proved that the \emph{weight vector of an entire graph} is harmonic\footnote{In \cite{sahi2014harmonic} 'harmonic' was taken to mean that a vector is in the right-kernel of a certain out-degree Laplacian, which in our setting corresponds to the transpose $[\Lin]^\intercal$; c.f. also  Prop. \ref{in_out_transpose_prop} below.}. Here, we extend this result, and completely characterize the left-kernel of the in-degree Laplacian $\Lin$ in terms of \emph{weight vectors of the reaches} of $G$. We will use the notation  $e_j = \delta_{j} = \chi_{\{j\}}$ for the characeristic function of the set $\{j\}$:

\begin{Prop}\label{prop:leftkernel_-}
	A basis of the \emph{left} kernel of an in-degree Laplacian $\Lin$ is 
	given by the subset
	$\{\langle \cev\gamma^{-}_{R},\cdot \rangle_{\ell^2(G)}\}_{R \in \mathfrak{R}}$ of the dual space $[\ell^2(G)]^* \cong \ell^2(G)$ indexed by the reaches $\mathfrak{R}$ of $G$. Each basis element corresponding to a reach is parallel to the weight vector of said reach (i.e. $\cev \gamma^{-}_{R} \parallel \omega_R$). Furthermore we may choose a normalization so that  $\sum_{j}\, \langle \cev \gamma^{-}_{R}, e_j \rangle =1 $ with  $	\langle \cev \gamma^{-}_{R}, e_j \rangle >0 $ iff $j \in B(R)$,  $	\langle \cev \gamma^{-}_{R}, e_j \rangle  = 0 $ if $j \notin B(R)$ 
	and we have 
	the  interplay $
	\langle \cev\gamma^{-}_{R}, \vec\gamma^{-}_{S}\rangle_{\ell^2(G)} = \delta_{RS}$ with the basis $\{\vec \gamma^{-}_{R}\}_{R \in \mathfrak{R}} \subseteq  \ell^2(G)$ of the right-kernel of $\Lin$, as characterized by Proposition \ref{right_kernel_Lin} above.
\end{Prop}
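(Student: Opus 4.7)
The plan is to identify, for each reach $R$, the covector $\cev\gamma^{-}_R$ as a positive scalar multiple of the weight vector $\omega_R$ (extended by zero outside $R$), and then verify the stated properties by reducing to a single-reach subgraph and invoking the matrix-tree identity of \cite{sahi2014harmonic}. Since the left-kernel equation $\cev\gamma(j) d^-(j) = \sum_x a(x,j)\cev\gamma(x)$ does not involve the node-mass function $m$, I first reduce to $m \equiv 1$; the mass reenters only in the final normalization.

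Three structural observations about reaches drive the argument. First, $\omega_R$ is supported exactly on the cabal $B(R)$, since a spanning tree of $R$ rooted at $i$ requires $i$ to reach every other node of $R$, which is the defining property of $B(R)$. Second, cabals are contained in exclusive parts: $B(R) \subseteq H(R)$; if $b \in B(R) \cap R'$ with $R' \neq R$, then some $b' \in B(R')$ would reach all of $R$ via $b$, forcing $R \subseteq R'$, in contradiction with the incomparability of distinct reaches. Third, cabal nodes have no incoming edges from outside $R$, as such an edge $y \to b$ would allow $y \notin R$ to reach all of $R$ via $b$, again violating maximality. Together these imply that cabals of distinct reaches are disjoint, that $d^-_G$ agrees with $d^-_{G_R}$ on $B(R)$ (where $G_R$ is the subgraph induced on $R$), and that $G_R$ has the single reach $R$ with cabal $B(R)$ (since paths from any $b \in B(R)$ remain inside its downstream closure $R$).

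For the central identity I apply \cite[Theorem 1]{sahi2014harmonic} to $G_R$ to obtain $\omega_R^{\intercal} \Lin_{G_R} = 0$, and then transfer to $\Lin_G$ by verifying $\omega_R(j) d^-_G(j) = \sum_x a(x,j) \omega_R(x)$ case-by-case in $j$. For $j \notin R$, both sides vanish: the left since $\omega_R(j) = 0$, the right since the third observation forbids $a(x,j) > 0$ for $x \in B(R)$. For $j \in R$, the right-hand side receives contributions only from $x \in B(R) \subseteq R$, where $a_G(x,j) = a_{G_R}(x,j)$; if $j \in B(R)$ then $d^-_G(j) = d^-_{G_R}(j)$ by the third observation and the identity reduces to Sahi's, while if $j \in R \setminus B(R)$ then $\omega_R(j) = 0$ and Sahi's identity on $G_R$ independently forces the right side to vanish. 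Linear independence of the $\{\omega_R\}_{R \in \mathfrak{R}}$ follows immediately from the disjointness of their supports, and the dimension count from Proposition \ref{right_kernel_Lin} combined with the finite-dimensional equality of left and right nullity shows they span the entire left kernel.

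Finally I set $\cev\gamma^{-}_R := \omega_R / \sum_j \omega_R(j) m(j)$, which is well-defined and positive on the nonempty cabal. Then $\langle \cev\gamma^{-}_R, e_j \rangle = \cev\gamma^{-}_R(j) m(j)$ yields the claimed support, positivity, and normalization $\sum_j \langle \cev\gamma^{-}_R, e_j \rangle = 1$. For biorthogonality: when $R \neq S$, the supports $B(R) \subseteq H(R)$ and $S$ are disjoint by the second observation, so $\langle \cev\gamma^{-}_R, \vec\gamma^{-}_S \rangle = 0$; when $R = S$, we have $\vec\gamma^{-}_R \equiv 1$ on $H(R) \supseteq B(R)$, which collapses the inner product to the normalization sum and gives $1$. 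The step I expect to require the most care is the transfer from $\Lin_{G_R}$ to $\Lin_G$, where one must verify that the in-degree correction $d^-_G - d^-_{G_R}$ and the edges exiting $R$ neither corrupt nor cancel the Sahi identity at nodes where $\omega_R$ is supported — which ultimately reduces to the isolation of the cabal from the exterior of $R$.
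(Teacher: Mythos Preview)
Your proof is correct and follows the same overall architecture as the paper's: identify $\omega_R$ as a left-null vector, use disjointness of supports for linear independence, match dimensions, then normalize and verify biorthogonality. The one genuine difference is in how you establish that $\omega_R$ annihilates $\Lin$. The paper re-runs Sahi's dangle bijection argument directly inside the reach $R$ (pairing $i$-dangles with $j$-trees plus an edge), and then extends the resulting identity $\sum_{j\in R}\omega_R(j)a(j,i)=\sum_{k\in R}\omega_R(i)a(i,k)$ from $R$ to $V$ using the observation that cabal nodes have no incoming edges from outside $R$. You instead invoke Sahi's theorem as a black box on the induced subgraph $G_R$, having first checked that $G_R$ has the single reach $R$ so that its graph weight vector coincides with $\omega_R$, and then transfer the identity to $G$ via the same cabal-isolation observation. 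Your route is slightly more modular and avoids rewriting the combinatorial core; the paper's is more self-contained. Both rely on the same structural fact about $B(R)$, and your three preliminary observations (support of $\omega_R$, $B(R)\subseteq H(R)$, no incoming edges to $B(R)$ from outside $R$) are exactly what the paper uses implicitly. For the dimension count the paper cites \cite[Theorem 4.6]{Veerman2020APO} directly for the multiplicity of $0$, whereas you go through Proposition~\ref{right_kernel_Lin} and row--column rank equality; both are fine in the finite setting assumed here.
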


It should be noted that parallelity with the weight vector $\omega_R$ already implies that $	\langle \cev \gamma^{-}_{R}, e_j \rangle \neq 0 $ if and only if $j \in B(R)$ (i.e. $j$ is in the cabal of  $R$). For all other $j \in V \setminus  B(R)$ we have $	\langle \cev \gamma^{-}_{R}, e_j \rangle = 0 $ instead, since by definition of
 'cabal' (c.f. Definition \ref{def:reaches}) there are no spanning trees of reach '$R$' that are rooted at any vertex outside the cabal $B(R)$ of this reach. 
\begin{proof}[Proof of Proposition \ref{prop:leftkernel_-}]
By \cite[Theorem 4.6]{Veerman2020APO} the algebraic and geometric multiplicities of the eigenvalue $0 \in \sigma(\Lin)$ is equal to the number of reaches   $|\mathfrak{R}|$ in the graph $G$. Next we note that the $\{\omega_R\}_{R \in \mathfrak{R}}$ have pairwise disjoint support and are thus linearly independent. Hence to show that $\{\omega_R\}_{R \in \mathfrak{R}}$ provides a basis of the left kernel of $\Lin$, it suffices to prove that each $\omega_R$ is harmonic in the sense that $\langle \omega_R, \Lin \cdot \rangle_{\ell^2(G)} = 0$.

To establish this, we follow the proof strategy of \cite[Theorem 1]{sahi2014harmonic} and first recall that a \textit{dangle} is defined as a directed graph $D$ that consists of an edge-disjoint combination of a directed forest $F$ and a directed cycle $C$ that connects the roots of the trees in $F$. The graph $D$ uniquely determines both $C$ and $F$, with $C$ being the sole simple cycle in the structure \cite{sahi2014harmonic}. Given a reach $R$ with $i \in R$, an  \textit{i-tree} then is a directed spanning tree of $R$ with root $i$.  An \textit{i-dangle} is a spanning dangle of $R$ where the cycle contains vertex $i$. 
The weight, $wt(\Gamma)$, of a subgraph $\Gamma$ of $G$ is defined as the product of the weights of all the edges contained in $\Gamma$.

\medskip 

Let $\Gamma$ be an i-dangle in $R$ with cycle $C$, and let $(j,i)$ and $(i,k)$ be the unique outgoing and incoming edges at vertex $i$ in $C$. Deleting one of these edges from $\Gamma$ gives rise to a $j$-tree and an $i$-tree, respectively. The dangle can be recovered uniquely from each of the two trees by reconnecting the respective edges. Thus, let $\mathcal{T}^R_i$ denote the set of $i$-trees for reach $R$. We obtain bijections from the set of $i$-dangles to the sets
$
\{ ((j,i), t) : t \in \mathcal{T}^R_j \}$ and $ \{ ((i,k), t) : t \in \mathcal{T}^R_i \}$ respectively.
Here $(j,i)$ and $(i,k)$ range over all outgoing and incoming edges at $i \in R$.

Thus, if $v_i$ is the weighted sum of all $i$-dangles, we have
$
\sum_{j\in R} \omega_R(j)  a(j,i) = v_i = \sum_{k \in R} \omega_R(i) a(i,k) 
$.
We can extend the left sum to a sum over all $j \in V$, since $\omega_R$ vanishes outside $R$. The sum on the right-hand side may similarly be extended to a sum over all $k \in V$: We first note that $\omega_R(i)$ is nonzero only if $i \in B(R)$. However, for such an $i$, all incoming edges must necessarily already originate in the cabal $B(R)$ of $R$ and hence in particular a part of the reach $R$. Thus  for $i \in B(R)$, $a(i,k)$ vanishes if $k \notin B(R)$ and we can extend the sum over $k$ to range over all of $V$.
Thus we have
\begin{align}
0 = \omega_R(i)\sum_{k \in R}   a(i,k)   -   \sum_{j\in R} \omega_R(j)  a(j,i)  = \omega_R(i) \din(i)  -   \sum_{j\in R} \omega_R(j)  a(j,i)  = \langle \omega_R, \Lin e_i \rangle_{\ell^2(G)}.
\end{align}
Since $i\in V$ was chosen arbitrarily, we indeed have $\langle \omega_R, \Lin \cdot\rangle_{\ell^2(G)} = 0$.

\medskip

To see the validity of the proposed normalization, note that $\omega_R \geq 0$, and as argued above $\omega_R(i) > 0$ precisely if $i \in B(R)$. By scaling if necessary, it is then clearly possible to normalize so that  $\sum_{j}\, \langle \cev \gamma^{-}_{R}, e_j \rangle =1.$ It remains to verify that with this normalization we also have $
\langle \cev\gamma^{-}_{R}, \vec\gamma^{-}_{S}\rangle_{\ell^2(G)} = \delta_{RS}$. Clearly $
\langle \cev\gamma^{-}_{R}, \vec\gamma^{-}_{S}\rangle_{\ell^2(G)} = 0 $ if $R \neq S$. Moreover if $R = S$, we have $
\langle \cev\gamma^{-}_{R}, \vec\gamma^{-}_{R}\rangle_{\ell^2(G)}  = \sum_{j \in B(R)}\, \langle \cev \gamma^{-}_{R}, e_j \rangle = \sum_{j \in V}\, \langle \cev \gamma^{-}_{R}, e_j \rangle =1$.
\end{proof}

\medskip

\noindent \textbf{Out-degree Laplacian}
Let us now consider the out-degree setting.    The first key observation we make, is that $\Lout$ on $G$ is the adjoint of
$\Lin$ on $G^\intercal$:
\begin{Prop}\label{in_out_transpose_prop}
	Let $\Lout_G$ be the out degree Laplacian on the graph $G$. Let $\Lin_{G^\intercal}$ be the in-degree Laplacian on the transposed graph $G^\intercal$. Then $\Lout_G = [\Lin_{G^\intercal}]^\star$; where '$^\star$' denotes the Hilbert space adjoint on $\ell^2(G)$.
\end{Prop}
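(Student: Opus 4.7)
The plan is to verify the identity by a direct computation of sesquilinear pairings and then invoke boundedness to conclude. The first observation is bookkeeping at the level of the degree functions: since $a^{\intercal}(v,i) = a(i,v)$, one has $\din_{G^{\intercal}}(v) = \sum_{i} a^{\intercal}(v,i) = \sum_i a(i,v) = \dout_G(v)$, so that
\begin{equation*}
[\Lin_{G^{\intercal}} g](x) = \frac{1}{m(x)}\left(\dout_G(x)\, g(x) - \sum_{y \in V} a(y,x)\, g(y)\right).
\end{equation*}
This places both $\Lin_{G^{\intercal}}$ and $\Lout_G$ in a form in which only the weight function $a$ and the masses $m$ appear, making the subsequent comparison clean.

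Next I would compute $\langle f, \Lin_{G^{\intercal}} g\rangle_{\ell^2(G)}$ by unfolding the definition of the inner product. The prefactor $1/m(x)$ inside $\Lin_{G^{\intercal}}$ cancels against the mass $m(x)$ from the inner product, so that
\begin{equation*}
\langle f, \Lin_{G^{\intercal}} g\rangle_{\ell^2(G)} = \sum_{x} \overline{f(x)}\, \dout_G(x)\, g(x) \;-\; \sum_{x,y} \overline{f(x)}\, a(y,x)\, g(y).
\end{equation*}
A relabeling $x \leftrightarrow y$ in the off-diagonal double sum yields $\sum_{x,y} \overline{f(y)}\, a(x,y)\, g(x)$. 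Re-inserting and then extracting a factor of $m(x)$ so as to recognize a pairing in $\ell^2(G)$ identifies the right-hand side as $\langle h, g\rangle_{\ell^2(G)}$ with $h(x) = m(x)^{-1}\bigl(\dout_G(x) f(x) - \sum_y a(x,y) f(y)\bigr)$; this is precisely $[\Lout_G f](x)$.

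Finally, I would invoke Proposition \ref{tame_Laplacians} to conclude: since $\Lin_{G^{\intercal}}$ and $\Lout_G$ are both bounded and everywhere-defined on $\ell^2(G)$ under Assumption \ref{well_behavedness_assumption}, the identity $\langle f, \Lin_{G^{\intercal}} g\rangle_{\ell^2(G)} = \langle \Lout_G f, g\rangle_{\ell^2(G)}$ for arbitrary $f,g \in \ell^2(G)$ characterizes $[\Lin_{G^{\intercal}}]^{\star}$ uniquely, giving $\Lout_G = [\Lin_{G^{\intercal}}]^{\star}$. The main obstacle is purely notational: because $m(x)^{-1}$ appears as an overall prefactor in the Laplacian while $m(x)$ appears in the inner product, one must be careful that the mass factor cancels and does not inadvertently attach to the off-diagonal sum after relabeling. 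No analytic subtleties concerning domains arise, thanks to the boundedness granted by Proposition \ref{tame_Laplacians}.
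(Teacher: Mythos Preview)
Your proposal is correct and follows essentially the same approach as the paper: a direct computation of the sesquilinear pairing, using the cancellation of $m(x)$ between the Laplacian and the inner product, followed by a relabeling in the off-diagonal double sum. The only cosmetic difference is that the paper starts from $\langle f,\Lout_G g\rangle$ and identifies it with $\langle \Lin_{G^\intercal} f, g\rangle$, while you start from $\langle f,\Lin_{G^\intercal} g\rangle$; your explicit invocation of Proposition~\ref{tame_Laplacians} to justify the adjoint identification is slightly more careful than the paper's presentation.
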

\begin{proof}
	This is a straightforward calculation. For $f,g \in \ell^2(G)$, we find
	\begin{align}
		\langle f, \Lout_G g \rangle_{\ell^2(G)} &= \sum\limits_{i \in V} \overline{f}(i)
		\left( \sum_{v \in V}    a(v.i)    \right)g(i)  
		+
		\sum\limits_{i \in V}
		\overline{f}(i)
		\sum_{w \in V}    a(i.w)    g(w)  \\
		&= 
		\sum\limits_{i \in V} \overline{ \left( \sum_{v \in V}    a^\intercal(i,v)    \right) f(i)}
		g(i)  
		+
		\sum_{w \in V}  
		\overline{ \sum\limits_{i \in V} a^\intercal(w,i) f(i)}
		g(w)  \\
		&= 	\langle \Lin_{G^\intercal} f,  g \rangle_{\ell^2(G)},
	\end{align}
which proves the claim. Here we used $\ell^2(G) = \ell^2(V,m) = \ell^2(G^\intercal)$.
\end{proof}

In order to understand the left and right kernel structure of $\Lout$, we can thus make use of our knowledge about the structure of  left- and right kernels of in-degree Laplacians, gained above. We begin by applying the definition of  weight vectors to reaches $R^\intercal$ making up the transposed graph $G^\intercal$:

\begin{Def}\label{weight_vector_of_trranspose} Let $\mathcal{T}^{R^\intercal}_i$ be the set of all spanning trees of the reach $R^\intercal$ that are rooted at node $i\in R^\intercal$. Let  $\tau^\intercal_i$ be such a spanning tree beginning at node $i$. We define the weight-vector of the reach $R^\intercal$ as 
	$
	(w_R^\intercal)[i] = \sum\limits_{\tau^\intercal_i \in \mathcal{T}^{R^\intercal}_i} \prod\limits_{(x,y)\in\tau^\intercal_i} a^\intercal(x,y).
	$
\end{Def}

\begin{Ex}\label{weight_vector_example_writing_II}
	Let us revisit the graphs of Figure \ref{weight_vector_example}. We have depicted the corresponding transposed graphs in Fig. \ref{weight_vector_example_transposed} below. Graph $G_1^\intercal$ contains two reaches (as opposed to $G_1$ before transposing; which contained a single reach only). We denote the corresponding reaches by $R^\intercal = \{a,b\}$ and $S^\intercal = \{a,c,d\}$. The corresponding weight vectors are easily found to be given by $\omega_{R^\intercal}(a) = \alpha $, $\omega_{R^\intercal}(b) = \omega_{R^\intercal}(c) = \omega_{R^\intercal}(d) = 0$ and $\omega_{S^\intercal}(d) = \delta \cdot \gamma$, $\omega_S(a) = \omega_S(b) = \omega_S(c) = 0$. Finally graph $G_3^\intercal$ contains a single reach, with corresponding weight vector $\omega_{R^\intercal}(d) = \eta \cdot \alpha \cdot \delta + \delta \cdot \gamma \cdot \eta + \delta \cdot \rho \cdot \gamma$, $\omega_{R^\intercal}(a) = \omega_{R^\intercal}(b) = \omega_{R^\intercal}(c) = 0$.
\end{Ex}

\begin{figure}[H]
	\centering
	\begin{subfigure}[b]{0.25\textwidth}
		\centering
		\begin{tikzpicture}
			\node[draw, circle, minimum size=12pt, inner sep=0pt] (a) at (0,1) {$a$};
			\node[draw, circle, minimum size=12pt, inner sep=0pt] (b) at (2,2) {$b$};
			\node[draw, circle, minimum size=12pt, inner sep=0pt] (c) at (2,0) {$c$};
			\node[draw, circle, minimum size=12pt, inner sep=0pt] (d) at (4,1) {$d$};
			
			\draw[->] (b) -- node[above left] {$\alpha$} (a);
			\draw[->] (c) -- node[below left] {$\gamma$} (a);
			\draw[->] (d) -- node[below right] {$\delta$} (c);
		\end{tikzpicture}
		\caption{$G^\intercal_1$}
	\end{subfigure}
	\hspace{0.05\textwidth}
	\begin{subfigure}[b]{0.25\textwidth}
		\centering
		\begin{tikzpicture}
			\node[draw, circle, minimum size=12pt, inner sep=0pt] (a) at (0,1) {$a$};
			\node[draw, circle, minimum size=12pt, inner sep=0pt] (b) at (2,2) {$b$};
			\node[draw, circle, minimum size=12pt, inner sep=0pt] (c) at (2,0) {$c$};
			\node[draw, circle, minimum size=12pt, inner sep=0pt] (d) at (4,1) {$d$};
			
			\draw[->] (b) -- node[above left] {$\alpha$} (a);
			\draw[->] (d) -- node[below right] {$\delta$} (c);
		\end{tikzpicture}
		\caption{$G^\intercal_2$}
	\end{subfigure}
	\hspace{0.05\textwidth}
	\begin{subfigure}[b]{0.25\textwidth}
		\centering
		\begin{tikzpicture}
			\node[draw, circle, minimum size=12pt, inner sep=0pt] (a) at (0,1) {$a$};
			\node[draw, circle, minimum size=12pt, inner sep=0pt] (b) at (2,2) {$b$};
			\node[draw, circle, minimum size=12pt, inner sep=0pt] (c) at (2,0) {$c$};
			\node[draw, circle, minimum size=12pt, inner sep=0pt] (d) at (4,1) {$d$};
			
			\draw[->] (b) -- node[above left] {$\alpha$} (a);
			\draw[->] (c) -- node[below left] {$\gamma$} (a);
			\draw[->] (d) -- node[below right] {$\delta$} (c);
			\draw[->] (c) -- node[right] {$\rho$} (b);
			\draw[->] (d) -- node[above right] {$\eta$} (b);
		\end{tikzpicture}
		\caption{$G^\intercal_3$}
	\end{subfigure}
	\caption{Example graphs of Fig \ref{weight_vector_example}.  with reversed edge directions}
	\label{weight_vector_example_transposed}
\end{figure}
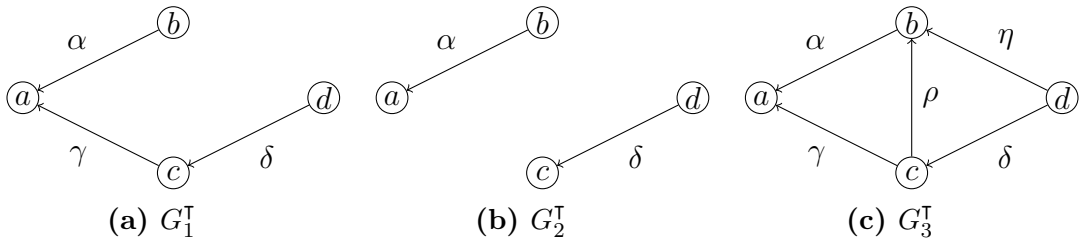

With Definition \ref{weight_vector_of_trranspose},  we find the following for the left-kernel of $\Lout$:

\begin{prop}\label{right_kernel_+}
	Let $G$ be a finite digraph, with  $ \mathfrak{R}^{\intercal} = \{R^\intercal\}$ the reaches of $G^\intercal$.  
	A basis of the \emph{right} kernel of the out-degree Laplacian $\Lout$ consists of elements $\{\vec\gamma^{+}_{R^\intercal}\}_{R^\intercal \in \mathfrak{R}^\intercal}$, with each element  parallel to the corresponding weight vector ($\vec\gamma^{+}_{R^\intercal} \parallel \omega_{R^\intercal}$), and
	  normalization	$\sum_{j \in V}\, \langle e_j, \vec\gamma^{+}_{R^\intercal}\rangle =1$, with 	$\langle e_j,	\vec	\gamma^{+}_{R^\intercal}\rangle >0$ iff $ j\in B(R^\intercal)$ and zero otherwise.
	\end{prop}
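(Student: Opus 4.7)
\medskip
\noindent\textbf{Proof plan.} The approach is to identify the right kernel of $\Lout$ on $G$ with the left kernel of $\Lin$ on the transposed graph $G^\intercal$, and then to invoke Proposition \ref{prop:leftkernel_-}. The starting point is Proposition \ref{in_out_transpose_prop}, which gives $\Lout_G = [\Lin_{G^\intercal}]^\star$ as bounded operators on the common Hilbert space $\ell^2(G) = \ell^2(V,m) = \ell^2(G^\intercal)$. Taking kernels and using the standard identity $\ker(A^\star) = \operatorname{Range}(A)^\perp$ on a Hilbert space, one obtains
\begin{align}
\ker(\Lout_G) \;=\; \ker\!\big([\Lin_{G^\intercal}]^\star\big) \;=\; \operatorname{Range}(\Lin_{G^\intercal})^\perp,
\end{align}
which under the Riesz identification $[\ell^2(G)]^\ast \cong \ell^2(G)$ is precisely the left kernel of $\Lin_{G^\intercal}$ as characterized in Proposition \ref{prop:leftkernel_-}.

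Next, I would apply Proposition \ref{prop:leftkernel_-} to the finite digraph $G^\intercal$ itself. By Definition \ref{def:reaches} the reaches of $G^\intercal$ are precisely $\mathfrak{R}^\intercal$, so Proposition \ref{prop:leftkernel_-} yields a basis $\{\cev\gamma^{-}_{R^\intercal}\}_{R^\intercal \in \mathfrak{R}^\intercal}$ of the left kernel of $\Lin_{G^\intercal}$, with each element parallel to the weight vector $\omega_{R^\intercal}$ defined in Definition \ref{weight_vector_of_trranspose}. Setting $\vec\gamma^{+}_{R^\intercal} := \cev\gamma^{-}_{R^\intercal}$ then produces the claimed basis of $\ker(\Lout_G)$ parallel to the weight vectors $\omega_{R^\intercal}$.

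Finally, the normalization and sign properties transfer directly. Proposition \ref{prop:leftkernel_-} provides a normalization with $\sum_{j \in V} \langle \cev\gamma^{-}_{R^\intercal}, e_j\rangle_{\ell^2(G^\intercal)} = 1$ and with the individual pairings strictly positive precisely on the cabal $B(R^\intercal)$ (and zero elsewhere). Because the weight vector $\omega_{R^\intercal}$ has nonnegative real entries, the basis vector $\vec\gamma^{+}_{R^\intercal}$ can be chosen real and nonnegative, so the sesquilinear asymmetry of the inner product is immaterial and $\langle \cev\gamma^{-}_{R^\intercal}, e_j\rangle_{\ell^2(G)} = \langle e_j, \vec\gamma^{+}_{R^\intercal}\rangle_{\ell^2(G)}$. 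The claimed normalization $\sum_{j \in V} \langle e_j, \vec\gamma^{+}_{R^\intercal}\rangle = 1$ and the positivity/vanishing pattern on $B(R^\intercal)$ are therefore immediate.

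\medskip
\noindent\textbf{Main obstacle.} There is essentially none of substance: the proposition is a corollary of Proposition \ref{prop:leftkernel_-} via the adjoint relation of Proposition \ref{in_out_transpose_prop}. The only care required is bookkeeping of the Riesz identification and of the conjugation convention in the inner product, both of which are trivialized by the fact that weight vectors are real and nonnegative.
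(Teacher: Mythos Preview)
Your proposal is correct and follows essentially the same approach as the paper: identify the right kernel of $\Lout_G$ with the left kernel of $\Lin_{G^\intercal}$ via Proposition \ref{in_out_transpose_prop}, then invoke Proposition \ref{prop:leftkernel_-} applied to $G^\intercal$. Your treatment is in fact more detailed than the paper's, which dispatches the whole argument in two sentences without spelling out the Riesz-identification or normalization bookkeeping.
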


	\begin{proof}
By Proposition \ref{in_out_transpose_prop}, the right kernel of $\Lout_G$ is the same as the left-kernel of $\Lin_{G^\intercal}$. By Proposition \ref{prop:leftkernel_-}, a basis of the left kernel of $\Lin_{G^\intercal}$ is exactly given by the family $\{\vec\gamma^{+}_{R^\intercal}\}_{R^\intercal \in \mathfrak{R}^\intercal}$ specified above.
	\end{proof}
	
For the corresponding right-kernel, we have the following:

\begin{Prop}\label{prop:leftkernel_+}
	A basis of the \emph{left} kernel of an out-degree Laplacian $\Lout$ consists of elements $\{\langle \cev{\gamma}^{+}_{R^\intercal},\cdot \rangle_{\ell^2(G)}  \}_{R^\intercal \in \mathfrak{R}^\intercal}$ of the dual space $[\ell^2(G)]^* \cong \ell^2(G)$, so that:
	\begin{align}
		\left\{\begin{matrix}
			\cev\gamma^{+}_{R^\intercal}(j) =1 & \logif & j\in H(R^\intercal) &\textrm{(exclusive)}\\
			\cev\gamma^{+}_{R^\intercal}(j) \in (0,1) & \logif & j\in C(R^\intercal) &\textrm{(common)}\\
			\cev\gamma^{+}_{R^\intercal}(j) =0 & \logif & j\not\in R^\intercal &\textrm{(not in reach)}\\			
		\end{matrix} \right.
	\end{align}
		Furthermore, regarding  interplay with
	the
	right-kernel we have  $
	\langle \cev\gamma^{+}_{R^\intercal}, \vec\gamma^{+}_{S^\intercal}\rangle_{\ell^2(G)} = \delta_{R^\intercal S^\intercal}$. Finally for all $j \in V$ arbitrary, we have $\sum_{R^\intercal \in \mathfrak{R}^\intercal}\,	\cev\gamma^{+}_{R^\intercal}(j) = 1$.
\end{Prop}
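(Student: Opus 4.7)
The plan is to reduce Proposition \ref{prop:leftkernel_+} for the out-degree Laplacian on $G$ to the already-established results for the in-degree Laplacian on the transposed graph $G^\intercal$, using the adjoint relation in Proposition \ref{in_out_transpose_prop}. Since $\Lout_G = [\Lin_{G^\intercal}]^\star$ on $\ell^2(G) = \ell^2(G^\intercal)$, a vector $v \in \ell^2(G)$ defines an element of the \emph{left} kernel of $\Lout_G$ precisely when $\langle v, \Lout_G g\rangle_{\ell^2(G)} = \langle \Lin_{G^\intercal} v, g\rangle_{\ell^2(G)}$ vanishes for every $g$, i.e.\ when $v$ lies in the \emph{right} kernel of $\Lin_{G^\intercal}$. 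So defining
\begin{equation*}
\cev\gamma^{+}_{R^\intercal} \ := \ \vec\gamma^{-}_{R^\intercal},
\end{equation*}
where the right-hand side is the right-kernel basis element of $\Lin_{G^\intercal}$ associated to the reach $R^\intercal$ of $G^\intercal$, immediately turns the problem into a direct application of Proposition \ref{right_kernel_Lin} applied to $G^\intercal$.

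The three-case description (equal to $1$ on $H(R^\intercal)$, in $(0,1)$ on $C(R^\intercal)$, and $0$ outside $R^\intercal$), as well as the partition-of-unity statement $\sum_{R^\intercal \in \mathfrak{R}^\intercal}\cev\gamma^{+}_{R^\intercal}(j) = 1$, are then inherited verbatim from Proposition \ref{right_kernel_Lin}. Linear independence of the $\cev\gamma^{+}_{R^\intercal}$ and the fact that they span the full left kernel follow because Proposition \ref{right_kernel_Lin} already asserts this for the right kernel of $\Lin_{G^\intercal}$, and because the dimension of $\ker([\Lin_{G^\intercal}]^\star)$ equals that of $\ker(\Lin_{G^\intercal})$ in the finite-dimensional setting (equivalently, by the adjoint correspondence $\operatorname{range}(T)^\perp = \ker(T^\star)$ together with the rank-nullity theorem).

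The biorthogonality relation $\langle \cev\gamma^{+}_{R^\intercal}, \vec\gamma^{+}_{S^\intercal}\rangle_{\ell^2(G)} = \delta_{R^\intercal S^\intercal}$ is obtained by combining this identification with the analogous identification used in Proposition \ref{right_kernel_+}: there, $\vec\gamma^{+}_{S^\intercal}$ was shown to coincide with $\cev\gamma^{-}_{S^\intercal}$, the left-kernel basis element for $\Lin_{G^\intercal}$. Hence
\begin{equation*}
\langle \cev\gamma^{+}_{R^\intercal}, \vec\gamma^{+}_{S^\intercal}\rangle_{\ell^2(G)} \ = \ \langle \vec\gamma^{-}_{R^\intercal}, \cev\gamma^{-}_{S^\intercal}\rangle_{\ell^2(G)} \ = \ \overline{\langle \cev\gamma^{-}_{S^\intercal}, \vec\gamma^{-}_{R^\intercal}\rangle_{\ell^2(G)}} \ = \ \delta_{R^\intercal S^\intercal},
\end{equation*}
where the last equality is exactly the biorthogonality statement in Proposition \ref{prop:leftkernel_-} applied to $G^\intercal$ (and we use that these scalars are real).

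There is essentially no genuine obstacle here; the work has already been done in Propositions \ref{right_kernel_Lin}, \ref{prop:leftkernel_-}, and \ref{in_out_transpose_prop}. The only point requiring a small amount of care is verifying that the dimension of the left kernel of $\Lout_G$ matches the number of reaches $|\mathfrak{R}^\intercal|$ of the \emph{transposed} graph (as opposed to $|\mathfrak{R}|$), which is immediate from the identification $\ker(\Lout_G) \cong \ker(\Lin_{G^\intercal})$ (as dim of left kernel equals dim of right kernel in finite dimensions) combined with \cite[Theorem 4.6]{Veerman2020APO}; the graphs $G$ and $G^\intercal$ may have different numbers of reaches, as Examples \ref{weight_vector_example_writing} and \ref{weight_vector_example_writing_II} illustrate, so this bookkeeping should be stated explicitly.
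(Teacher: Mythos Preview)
Your proposal is correct and follows essentially the same approach as the paper's own proof: identify the left kernel of $\Lout_G$ with the right kernel of $\Lin_{G^\intercal}$ via Proposition \ref{in_out_transpose_prop}, then invoke Proposition \ref{right_kernel_Lin} on $G^\intercal$. You supply more detail on the biorthogonality and dimension-count than the paper's terse two-sentence proof, but the strategy is identical.
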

\begin{proof}
By Proposition \ref{in_out_transpose_prop}, the left kernel of $\Lout_G$ is the same as the right-kernel of $\Lin_{G^\intercal}$. By Proposition \ref{right_kernel_Lin}, a basis of the right kernel of $\Lin_{G^\intercal}$ is exactly given by the family $\{\cev\gamma^{+}_{R^\intercal}\}_{R^\intercal \in \mathfrak{R}^\intercal}$ specified above.
\end{proof}

	\section{Convergence under increasing Connectivity I: Undirected Graphs}\label{undirected_convergence}
	
	We are now interested in the setting where the connectivity within certain clusters within $G$ tends to infinity. 
	We first consider only undirected graphs and begin by specifying what we mean by 'a collection of clusters' in this setting:
	\begin{Def}\label{clusters} Given a graph $G$, a collection of undirected clusters $\{\hat E_i\}_{i \in I}$ is a countable family  of edge sets ($\hat E_i \subseteq E$) with total set $\hat{E} = \cup_{i \in I} \hat{E}_i$ so that 
		\begin{enumerate}[label=(\roman*), topsep=0pt, itemsep=-1em]
			\item Each $\hat E_i$ has finite mass: $ \sum_i m(i) < \infty$\\
			\item $\{\hat E_i\}_{i \in I}$ is pairwise node-disjoint:  $(a,b) \in E_i \Rightarrow \forall v \in V$: $\nexists j \neq i$: $(a,v) \in \hat E_j$.\\
			\item Each $\hat E_i$ determines a connected component of $(V,\hat E)$: If there are edges $(a,b), (c,d) \in \hat E_i$, then there exists a path $a \leftrightsquigarrow d $ completely contained within $\hat E_i$.  
		\end{enumerate}
Associated to each cluster $\hat E_i$ is the corresponding node set $\hat{V}_i =  \{v| \exists j \in V: (v,a) \in \hat E_i  \}$ of the vertices connected by edges in the cluster  $E_i$.
	\end{Def}
In particular, node-disjointness clearly implies edge-disjointness ($\forall i \neq j: \hat E_i \cap \hat E_j = \emptyset$), and that we may always ensure (iii) by further subdivision of each $E_i$ if necessary.

	\subsection{Strong resolvent convergence}\label{undirected_strong_resolvent_convergence}
		We are now interested in the setting where the connectivity within the individual clusters $i \in I$ increases unboundedly.  More specifically, we consider a setting where we modify our initial graph $G = (G,m,E,a)$ into a family of graphs $G_{\beta} = (G,m,E,a_{\beta})$ indexed by $\beta \in [0,\infty)$ so that $G_0 = G$ and 
	\begin{align}\label{defining_kato_setting}
		\text{for $\tilde{\beta} >  \beta $}: 
		\quad
	\begin{cases}
				a_{\tilde{\beta}}(i,j) = a(i,j), & \text{if } (i,j) \in E\setminus \hat{E}  \\
	 a_{\tilde{\beta}}(i,j) > a_\beta(i,j), & \text{if }  (i,j) \in  \hat{E},
	\end{cases}
	\end{align}
with $a_{\beta}(i,j) \rightarrow \infty$ for any $(i,j) \in \hat{E}$, as $\beta \rightarrow \infty$.

\medskip

		With the Dirichlet form $	\mathcal{E}_{G_\beta}$ of (\ref{dirichlet_form}), we then have 	for $\tilde{\beta} > \beta$  that:
	\begin{align}\label{def_quadratic_form}
			\mathcal{E}_{G_\beta}(f,f) =  \frac12 \left(\sum\limits_{(i,j) \in E \setminus \hat{E}} a(i,j) |f(i) - f(j)|^2 +   \sum\limits_{(i,j) \in  \hat{E}} a_\beta(i,j) |f(i) - f(j)|^2 \right)\leq  	\mathcal{E}_{G_{\tilde{\beta}}}(f,f)
	\end{align}
This inequality is strict unless $f$ is constant on all  sets $\{V_i\}_{i \in I}$ of cluster nodes.
The prototypical example of this setting arises by modifying the edge-weight function of the original graph $G$ as $a \mapsto \hat{a}_\beta$, with 
\begin{align}\label{prototypical_example}
\hat{a}_{\beta}(x,y) =  a(x,y) + \beta\cdot \chi_{\hat{E}}((x,y)) a(x,y).
\end{align}
Here $\chi_{\hat{E}}$ is the characteristic function of the total-set $\hat{E}$ of cluster-edges.
	\medskip

We may decompose the  Laplacian
$L_\beta$ corresponding to $\mathcal{E}_{G_\beta}$ as 
\begin{align}\label{original_decomposition}
	L_\beta = L_{E \setminus \hat{E}} +  L_{\hat{E},\beta},
\end{align}
with the two constituent operators defined as

	\begin{align}\label{split_laplacians}
		[L_{\hat{E},\beta} f](x) &= \frac{1}{m(x)} \left(d_{\hat{E},\beta}(x)  f(x)   - \sum\limits_{y \in V}[\chi_{\hat{E}}((x,y)) a_\beta(x,y)] f(y) \right), \\
		[L_{E \setminus \hat{E}} f](x) &= \frac{1}{m(x)} \left(d_{E \setminus \hat{E}}(x)  f(x)   - \sum\limits_{y \in V}[\chi_{E \setminus \hat{E}}((x,y)) a(x,y)] f(y) \right). 
	\end{align}

Here $d_{\hat{E},\beta}(x) = \sum_{y \in V} \chi_{\hat{E}}((x,y)) a_\beta(x,y)$ represents the degree of a node $x \in V$ in the graph determined by the edges in $\hat{E}$, while $d_{E\setminus \hat{E}}(x) = \sum_{y \in V} \chi_{E\setminus\hat{E}}((x,y)) a(x,y)$ is the degree of the same node in the graph determined by the remaining edges $E \setminus \hat{E}$. Clearly $d_\beta(x) = d_{\hat{E},\beta}(x)  + d_{E\setminus \hat{E}}(x)$, with $d_\beta(x) = \sum_{y \in V} a_\beta(x,y)$ the degree of node $x$ in the original graph $G$. It is straightforward to verify, that
	the Laplacians of (\ref{split_laplacians}) are induced by the graph structures $G_{\hat{E}} = (V, m, \hat{E}, a_{\hat{E},\beta})$ and $G_{E \setminus \hat{E}} = (V, m, E\setminus \hat{E}, a_{E\setminus \hat{E}})$, with $a_{ \hat{E}, \beta}, a_{E\setminus \hat{E}}$ the restrictions of $a_{\beta}$ to the respective sets.

We can thus think of the Laplacian in (\ref{original_decomposition}) as being given by a fixed background Laplacian $L_{E \setminus \hat{E}}$ determined by the graph structure $G_{E \setminus \hat{E}}$,
subjected to a (large) perturbation $L_{\hat{E}, \beta}$ determined by the clusters in $G_{\hat{E}}$ on which edge weights diverge.

\medskip

Clearly the perturbing operator $L_{\hat{E},\beta}$ as well as the total Laplacian $L_\beta = L_{E \setminus \hat{E}} +  L_{\hat{E},\beta}$ diverge in norm ($\|L_\beta\|, \|L_{\hat{E},\beta}\| \rightarrow \infty$). To establish a well defined notion of convergence, we hence instead consider the corresponding family of \emph{resolvents} $\{(L_\beta - z \IG)^{-1}\}_{\beta}$, which remain uniformly bounded in $\beta$: $
	\|(L_\beta - z \IG)^{-1}\| \leq 1/\dist(z, \mathds{R}_+
$).

\medskip

Below, we will use a modified version of Kato's monotone convergence theorem 	\cite{kato1995perturbation,simon1978canonical}, to establish convergence of the family of resolvents $\{(L_\beta - z \IG)^{-1}\}$ towards the resolvent of a limit operator defined on a subspace of $\ell^2(G)$. We then interpret this limit operator in graph theoretic terms as an effective Laplacian defined on a 'coarse grained' graph $\underline{G}$. We begin by first introducing this coarse grained graph:

\begin{Def}\label{und_coarse_G_def} Given a graph $G = (V, m, E, a)$ and a countable collection $\{E_i\}_{i \in I}$ of clustered edges,
	we define a reduced graph $\underline{G} = \{\underline{V}, \underline{m}, \underline{E},\underline{a}\}$ as follows:
	Define an equivalence relation "$\sim_{\hat{E}}$" on the node-set $V$ by setting $a \sim_{\hat{E}} b $ iff
	 $a$ and $b$ are contained in the same connected component of the graph $(V, \hat{E})$.
	\begin{enumerate}[label=(\roman*), topsep=0pt, itemsep=-.2em]
		\item 	The node set $\underline{V}$, is taken to be the set of connected components $\mathfrak{C}_{\hat{E}}$ of $(V, \hat{E})$:
		\begin{align}
			\underline{V} : =  \mathfrak{C}_{\hat{E}} = V/ \sim_{\hat{E}}
		\end{align}
		\item The weight of a node $C \in \underline{V}$ is given as the sum of the weights in the corresponding connected component $C \subseteq V$ of the original graph $(V, \hat{E})$: 
		\begin{align}\label{mass_def_undirected}
			\underline{m}(C) := \sum\limits_{v \in C} m(v).
		\end{align}
		\item The edge-weight function $\underline{a}:\underline{V} \times \underline{V} \rightarrow [0, \infty)$ arises similarly via aggregation:
		\begin{align}\label{edge_def_undir}
			\underline{a}(C, D) := \sum\limits_{v \in C, w \in D} a(v,w)
		\end{align}
		\item The set of edges $\underline{E}$ is then given as the support of $\underline{a}$ on $\underline{V} \times \underline{V}$:
		\begin{equation}
			\underline{E} = \{(C,D)\in\underline{V}\times\underline{V}: \underline{a}(C,D) >0 \}
		\end{equation}
	\end{enumerate}
\end{Def}
Here we have denoted by $(V,\hat{E})$ the graph with node set $V$, edge set $\hat{E}$ and all non-zero edge weights and node masses set to one. Note that a connected component of $(V,\hat{E})$ may also  consist only of a single node not connected to any other node 
via an edge in $\hat{E}$.

\medskip
 We can  canonically  associate a Laplacian to the graph $\underline{G}$ via Definition \ref{laplacian_definition}. Since this operator is defined on the Hilbert space $\ell^2(\underline{G})$  as opposed to $\ell^2(G)$, we need \emph{projection}- and  \emph{interpolation} operators translating between these two spaces:

 \begin{Def}\label{proj_interp_undirected}
 	At the level of Hilbert spaces, we define the bounded projection- $J_\downarrow: \ell^2(G) \rightarrow \ell^2(\underline{G})$ and interpolation- $J_\uparrow: \ell^2(\underline{G}) \rightarrow \ell^2(G)$ operators as 
 	\begin{align}\label{undirected_Js}
 		[J_\downarrow f](C) = \langle \chi_{C}, f \rangle_{\ell^2(V)}/\underline{m}(C),\quad \text{and} \quad J_\uparrow \underline{f} = \sum_{v \in C} \underline{f}(C) \chi_{C}.
 	\end{align}
 With this Definition, $J_\uparrow J_{\downarrow}= P$, with $P$ the orthogonal projection onto the subspace of $\ell^2(G)$ given by elements $f$ which are constant on each $V_i$ ($i \in I$). This space is precisely the kernel of the Laplacian $L_{\hat{E},\beta}$ (for any $\beta$).
 \end{Def}

 In principle, there is one scalar degree of fredom per cluster $C$ in the definition of $J_{\uparrow\downarrow}$: For any collection $\{b_C\}_{C \in \mathfrak{C}}$ with    $b_C \neq 0$, making the simultaneous change $[J_\downarrow f](C) \mapsto b_c \cdot \langle \chi_{C}, f \rangle_{\ell^2(V)}/\underline{m}(C)$ and  $J_\uparrow \underline{f} \mapsto \sum_{v \in C} \underline{f}(C)/b_C$   leads to the same projection $P = J_\uparrow J_\downarrow $.
For the choice $b_C \equiv 1$ of Definition \ref{proj_interp_undirected}, it is not hard to see that $J_\uparrow: \ell^2(\underline{G}) \rightarrow \ell^2(G) $ is isometric and that we have $\|J_\downarrow\| \leq 1$.  However, even more can be said about the naturality of $J_{\uparrow \downarrow}$

\begin{Rem}
	 Denote by $\ell^1(G)$ the Banach space of functions on $G$ that are summable  with respect to $m$ ($\sum_{v \in V} |f(v)| m(v) < \infty$). Any such function $f \in \ell^2(G) \cap \ell^1(G)$ that only takes on non-negative values  and satisfies $\|f\|_{\ell^1(G)} = 1$ may then be interpreted as a probability distribution. It is well known that the heat kernel preserves the property of 'being a probbability distribution': If $f \geq 0$, then if $\|f\|_{\ell^1(G)} = 1$ also  $\|e^{-tL}f\|_{\ell^1(G)} = 1$ for any $t \geq 0$. Taking the limit $t \rightarrow \infty$ and using that $e^{-tL_{\hat{E},\beta}} \rightarrow P$ in norm, we also find $ 1 = \|f\|_{\ell^1(G)} = \|Pf\|_{\ell^1(G)} = \|J_\uparrow J_\downarrow f\|_{\ell^1(G)} $. The maps $J_{\uparrow \downarrow} $ are now chosen precisely so that  they map probability distributions to probability distributions: If $f \geq 0$ then $[J_\downarrow f] \geq 0$ and similarly if $\ell^2(\underline{G}) \cap \ell^1(\underline{G}) \ni \underline{f} \geq 0$ then $[J_\uparrow f] \geq 0$.  Furthermore we have $\|J_\downarrow f\|_{\ell^1(\underline{G})} = \|f\|_{\ell^1(G)}$ and 
	 $ \|J_\uparrow \underline{f}\|_{\ell^1(G)}  = \| \underline{f}\|_{\ell^1(\underline{G})} $.
\end{Rem}

\medskip
With $\underline{G}$ as in Definition \ref{und_coarse_G_def} we find:
\begin{Thm}\label{undir_appr_thm_strongly}
	Let $\underline{L}$ be the Laplacian on $\underline{G}$. We have the \emph{strong} convergence
	\begin{equation}\label{strong_undirected_resolvent_closeness}
		(L_\beta - z I_{\ell^2(G)})^{-1} \rightarrow  J^\uparrow  (\underline{L} - z I_{\ell^2(\underline{G})})^{-1} J^\downarrow.
	\end{equation}
\end{Thm}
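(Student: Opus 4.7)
The plan is to apply a monotone convergence theorem for quadratic forms (Kato--Simon) to the increasing family of Dirichlet forms $\mathcal{E}_{G_\beta}$, identify the limit form, and then recognise the associated limit operator as $\underline{L}$ via the isometry $J_\uparrow$.

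\textbf{Step 1: monotonicity of the forms.} By (\ref{def_quadratic_form}), the forms $q_\beta(f,f) := \mathcal{E}_{G_\beta}(f,f)$ on $\ell^2(G)$ are non-decreasing in $\beta$. Each $q_\beta$ is a closed, non-negative, symmetric form and is the one associated with the self-adjoint operator $L_\beta$ via (\ref{dirichlet_form}).

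\textbf{Step 2: identification of the limit form.} I would compute the pointwise limit
\begin{equation*}
q_\infty(f,f) := \lim_{\beta \to \infty} q_\beta(f,f) = \begin{cases} \mathcal{E}_{G_{E\setminus\hat{E}}}(f,f) & \text{if $f$ is constant on each $\hat{V}_i$,} \\ +\infty & \text{otherwise,} \end{cases}
\end{equation*}
the dichotomy following from the fact that the $\hat{E}$-part in (\ref{def_quadratic_form}) blows up unless the differences $f(i)-f(j)$ vanish on all $(i,j) \in \hat{E}$, which by (iii) of Definition \ref{clusters} is exactly the condition that $f$ is constant on each $\hat{V}_i$. By standard theory (supremum of an increasing family of closed forms is closed), $q_\infty$ is a closed form with finiteness domain $\mathcal{D}(q_\infty) = P\ell^2(G)$, where $P$ is the projection of Definition \ref{proj_interp_undirected}.

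\textbf{Step 3: apply the monotone convergence theorem.} The Kato--Simon monotone convergence theorem for forms \cite{kato1995perturbation,simon1978canonical} then yields strong resolvent convergence
\begin{equation*}
(L_\beta - z I_{\ell^2(G)})^{-1} \longrightarrow (L_\infty - z P)^{-1} P, \qquad \beta \to \infty,
\end{equation*}
where $L_\infty$ is the self-adjoint operator on the Hilbert space $P\ell^2(G)$ associated with the closed form $q_\infty|_{P\ell^2(G)}$, and on $(1-P)\ell^2(G)$ the right-hand side is understood as zero. Uniform boundedness $\|(L_\beta - z)^{-1}\| \le 1/\dist(z,\R_+)$ already noted in the excerpt gives the standing hypotheses.

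\textbf{Step 4: identify $L_\infty$ with $\underline{L}$.} A direct check from (\ref{undirected_Js}) and the definition (\ref{mass_def_undirected}) shows that $J_\uparrow : \ell^2(\underline{G}) \to \ell^2(G)$ is an isometry with range $P\ell^2(G)$, and $J_\downarrow = J_\uparrow^*$ (so that $J_\uparrow J_\downarrow = P$ and $J_\downarrow J_\uparrow = I_{\ell^2(\underline{G})}$). For $\underline{f} \in \ell^2(\underline{G})$ one computes, using that edges inside a cluster contribute zero whenever $f = J_\uparrow \underline{f}$ is constant on clusters,
\begin{equation*}
q_\infty(J_\uparrow \underline{f}, J_\uparrow \underline{f}) = \sum_{(i,j) \in E \setminus \hat{E}} a(i,j) |\underline{f}([i]) - \underline{f}([j])|^2 = \sum_{(C,D) \in \underline{E}} \underline{a}(C,D) |\underline{f}(C)-\underline{f}(D)|^2 = \mathcal{E}_{\underline{G}}(\underline{f},\underline{f}),
\end{equation*}
where I have simply reorganised the sum using the aggregation rule (\ref{edge_def_undir}). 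This identifies the form $q_\infty$ on $P\ell^2(G)$, transported through the unitary $J_\uparrow$, with the Dirichlet form of $\underline{G}$, and hence $J_\downarrow L_\infty J_\uparrow = \underline{L}$.

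\textbf{Step 5: conclusion.} Combining Steps 3 and 4, $(L_\infty - zP)^{-1} P = J_\uparrow (\underline{L} - z I_{\ell^2(\underline{G})})^{-1} J_\downarrow$, which gives (\ref{strong_undirected_resolvent_closeness}).

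\textbf{Main obstacle.} The formal Kato--Simon theorem is usually stated for densely defined limit forms; here the limit form is only densely defined on the proper subspace $P\ell^2(G)$. The small technical point to nail down is the interpretation of the resolvent on the orthogonal complement and its compatibility with the projection $P$; this is standard but requires quoting the correct version of the theorem (\cite{simon1978canonical} handles exactly this case). The remaining work -- checking that $J_\uparrow$ is isometric, that its range is $P\ell^2(G)$, and that it intertwines the two forms -- is purely algebraic.
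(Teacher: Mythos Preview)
Your proposal is correct and follows essentially the same route as the paper: both apply the Kato--Simon monotone convergence theorem for forms to the increasing family $\mathcal{E}_{G_\beta}$, identify the limit form as supported on $P\ell^2(G)$, and then recognise the limit operator as $\underline{L}$ transported via $J_\uparrow,J_\downarrow$. The only minor difference is in Step~4: the paper first shows $T=PL_{E\setminus\hat{E}}P$ by polarisation and then verifies $P(PLP-z)^{-1}P=J_\uparrow(\underline{L}-z)^{-1}J_\downarrow$ via an algebraic manipulation of resolvents using $J_\downarrow J_\uparrow=I_{\ell^2(\underline{G})}$, whereas you carry out the identification at the level of Dirichlet forms by computing $q_\infty(J_\uparrow\underline{f},J_\uparrow\underline{f})=\mathcal{E}_{\underline{G}}(\underline{f},\underline{f})$ directly---this is arguably the cleaner of the two, but the content is the same.
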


\begin{proof}
	The proof proceeds in two steps: We first establish generalized strong resolvent convergence of $L_\beta$ towards a limit operator $T$ defined on a subspace of $\ell^2(G)$. Then we use projection and interpolation operators as introduced in Definition \ref{proj_interp_undirected} to identify $T$ with a coarse grained graph Laplacian $\underline{L}$ on $\ell^2(\underline{G})$ as $T = J^\uparrow \underline{L} J^\downarrow$.
	
	\medskip

	To establish convergence towards a limit operator $T$, we first define the limit quadratic form 
		$
		q(f,f) = \lim_{\beta \rightarrow \infty} \mathcal{E}_{G_\beta}(f,f)
		$
whenever the finite limit exists. By \cite[VIII Theorem 3.13a]{kato1995perturbation}, $q$ is a closed symmetric form bounded from below. 
Clearly
\begin{align}\label{well_def_cond}
	 \lim\limits_{\beta \rightarrow \infty} \mathcal{E}_{G_\beta}(f,f) < \infty \Leftrightarrow \text{$f$ is constant on each $V_i$, $i \in I$}.
\end{align}
Let  now $P$ be the orthogonal projection onto $\{f \in \ell^2(G): \text{$f$ is constant on each $V_i$}\}$; i.e. the closed space of elements satisfying (\ref{well_def_cond}).
Then $q$ is a bounded quadratic form with domain all of $P\ell^2(G)$. Denote by $T$ the operator associated to $q$ via $q(g,g) = \langle g,Tg\rangle_{P\ell^2(G)}$. By \cite[Theorem 4.1]{simon1978canonical} we hence have the strong convergence $(L_\beta - z \IG)^{-1} \rightarrow P(T - z I_{P \ell^2(G)})^{-1}P$.

\medskip
Using positivity of $q$, polarization, the defining relation $
	q(f,f) =: \langle f, T f\rangle_{P \ell^2(G)}$
and the fact that for any $f \in P \ell^2(G)$ we have $q(f,f) = \langle f, L_\beta f\rangle_{\ell^2(G)} = \langle f, L_{E \setminus \hat{E}} f\rangle_{\ell^2(G)}$, it is straightforward to verify that $T = PL_{E \setminus \hat{E}}P =  PL_{\beta}P. $
\medskip Thus it remains to verify that
\begin{align}\label{intermediate_objective}
P(PLP - z I_{P \ell^2(G)})^{-1}P =  J_\uparrow  (\underline{L} - z I_{\ell^2(\underline{G})})^{-1} J_\downarrow.
\end{align}

Arguing from right to left in (\ref{intermediate_objective}),  we note that since \( J_\uparrow \) is a right-inverse for \( J_\downarrow \) and in the opposite direction \( J_\downarrow \) inverts \( J_\uparrow \) on its range, we have 
\begin{align}
	J_\uparrow  (J_\downarrow  L_{E \setminus \hat{E}}  J_\uparrow - z    I_{\ell^2(\underline{V})})^{-1} J_\downarrow &=
	J_\uparrow  ((J_\downarrow J_\uparrow) J_\downarrow  L_{E \setminus \hat{E}}  J_\uparrow  (J_\downarrow J_\uparrow)- z    (J_\downarrow J_\uparrow))^{-1} J_\downarrow
	\\
	=& 
	J_\uparrow  (J_\downarrow [J_\uparrow J_\downarrow  L_{E \setminus \hat{E}}  J_\uparrow  J_\downarrow - z  I_{\ell^2(V)}  ]J_\uparrow)^{-1} J_\downarrow
	\\
	=& J_\uparrow J_\downarrow  (J_\uparrow J_\downarrow  L_{E \setminus \hat{E}}  J_\uparrow J_\downarrow - z    I_{\ell^2(V)})^{-1}J_\uparrow J_\downarrow.
\end{align}
However, the product \( J_\uparrow J_\downarrow \) is simply the  projection \( P \) above, as is easily verified. 

\medskip

It remains to verify that the Laplacian $\underline{L}$ associated to $\underline{G}$ is exactly given as $\underline{L} = J_\downarrow L_{E\setminus \hat{E}} J_\uparrow$. But this is a straightforward calculation.
\end{proof}

From the proof of Theorem \ref{proj_interp_undirected}, we can now also understand the demand that the mass $m(V_i) = \sum_{v \in V_i} m(v)$ of each node set associated to the cluster $E_i$ be finite:
Pick an element $f \in \ell^2(G)$. From (\ref{def_quadratic_form}), we see that  $\lim_{\beta \rightarrow \infty}	\mathcal{E}_{G_\beta}(f,f) < \infty$ is possible  only if $f$ is constant on $V_i$. However, if $m(V_i)  = \infty$ any element $f$ constant on $V_i$ is no longer in $\ell^2(G)$. Assuming finite mass of clusters precisely avoids this degenerate case.

\begin{Rem}\label{naturality_remark_undirected}
	For future reference, we here note, that we may also express the definition of aggregated node masses and edge weights in a way that makes their relation to the (kernel of the) perturbing Laplacian $L_{\hat{E},\beta}$  manifest: The right kernel of $L_{\hat{E},\beta}$ is spanned by the vectors  $\{\vec{\gamma}_C\}_{C \in \mathfrak{C}}$ which are the indicator functions of the various clusters $C \in \mathfrak{C}$  (i.e. $\vec{\gamma}_C = {\chi_C}$). Since $L_{\hat{E},\beta}$ is self-adjoint on $\ell^2(G)$, its left kernel is canonically isomorphic to the right kernel via the Riesz representation theorem. We choose a basis $\{\cev\gamma_C\}_{C \in \mathfrak{C}}$, so that the basis elements $\langle \cev{\gamma}_C, \cdot \rangle_{\ell^2(G)}$ of the left kernel satisfy  $\langle \cev{\gamma}_C, \vec{\gamma}_B \rangle_{\ell^2(G)} = \delta_{CB}$. The orthogonal projection $P$  onto the kernel of $L_{\hat{E}, \beta}$ may then be written as
	$
	P = \sum_{C \in \mathfrak{C}} \vec{\gamma}_C \langle \cev{\gamma}_C, \cdot \rangle_{\ell^2(G)}
	$.\footnote{ This is exactly the spectral projection onto $0 \in \sigma(L_{\hat{E},\beta})$.}
	With this, we have
	\begin{align}
		\underline{m}(C) = \langle \chi_C, P \chi_C \rangle_{\ell^2(G)}, \quad 
		\underline{a}(C, B) := \underline{m}(C)\cdot\sum\limits_{c \in C, b \in B} 
		[
		\cev{\gamma}_{C}(c)
		a(c,b)
		\vec{\gamma}_B(b)
		].
	\end{align} 
\end{Rem}

	\medskip
	Let us  consider two fundamental examples to which Theorem \ref{undir_appr_thm_strongly} applies:
	
	\begin{Ex}\label{one_edge_div}	Let us first consider the setting of a single edge\\
\noindent		\begin{minipage}[b]{0.7\textwidth}
 tending to infinity. More  
 precisely (c.f. also Fig. \ref{fig:graphs}), consider a graph on three nodes ($V = \{a,b,c\}$), with edge weights  $a(a,b) = a(a,c) = 1$ and $a(b,c) = \beta \rightarrow \infty$, and thus are in the setting of (\ref{prototypical_example}). 
With node mass matrix $M =  \text{diag}(m(a), m(b), m(c))$, we have
\begin{align}\label{und_one_edge_div_laplacian}
	L_\beta =
	M^{-1}
	\cdot 	
	\left(
	\begin{pmatrix}
		2 & -1 & -1 \\
		-1 &1 & 0 \\
		-1 & 0 & 1
	\end{pmatrix}
	+ \beta
	\begin{pmatrix}
		0 & 0 & 0 \\
		0&1 & -1 \\
		0 & -1 & 1
	\end{pmatrix}
	\right).
\end{align}

The reduced node set is given as $\underline{V} = \{a, \{b,c\}\}$ and we have $\underline{m}(a) = m(a)$ and  $\underline{m}( \{b,c\}) = m(b) + m(c)$. We find 
\noindent
		\end{minipage}
		\begin{minipage}[b]{0.4\textwidth}
			\centering
			\begin{minipage}[b]{0.6\textwidth}
\raggedleft
				\begin{tikzpicture}
					\node[draw, circle, minimum size=12pt, inner sep=0pt] (a) at (1,1.73) {$a$};
					\node[draw, circle, minimum size=12pt, inner sep=0pt] (b) at (0,0) {b};
					\node[draw, circle, minimum size=12pt, inner sep=0pt] (c) at (2,0) {c};
					\draw (a) -- node[left] {$1$} (b);
					\draw[red] (b) -- node[black, below] {$\beta \gg 1$} (c);
					\draw (c) -- node[right] {$1$} (a);
				\end{tikzpicture}
			
				\par\vspace{0.5ex}
				\centering{	(a) $G$}
			\end{minipage}
			\hfill
			\begin{minipage}[b]{0.35\textwidth}
				\raggedright
				\begin{tikzpicture}
					\node[draw, circle, minimum size=12pt, inner sep=0pt] (a) at (1,1.73) {$a$};
					\node[draw, circle, minimum size=12pt, inner sep=0pt] (bc) at (1,0) {$b \cup c$};
					\draw (a) -- node[right] {$2$}  (bc);
				\end{tikzpicture}
				\par\vspace{0.5ex}
				(b) $\underline{G}$
			\end{minipage}
			\captionof{figure}{Original graph $G$ and reduced graph $\underline{G}$.}
			\label{fig:graphs}
		\end{minipage}

\noindent  
$J_\downarrow = (1,0)^\intercal\cdot (1,0,0) + (0,1)^\intercal \cdot (0,m(b), m(c))/[m(b) + m(c)]$ and
$J_\uparrow = (1,0,0)^\intercal \cdot (1,0) + (0,1,1)^\intercal \cdot (0,1) $. Finally, with $\underline{M}  = \text{diag}(m(a) , [m(b) + m(c)])$ we find the reduced Laplacian $\underline{L}$ acting  on $\ell^2(G)$ to be given as
\begin{align}
L = \underline{M}^{-1} \cdot	\begin{pmatrix}
		2 & -1 \\
		-2 & 2
	\end{pmatrix}.
\end{align}
	\end{Ex}

	\begin{Ex}\label{zero_is_accumulation_point} Next let us consider an infinite graph, with node set $V = \{\nu_i\}_{i = 1}^\infty$, node weights set to unity, and edge weights given as $a(\nu_{2n-1},\nu_{2n}) =1$ and $a(\nu_{2n},\nu_{2n+1}) =\beta/(2n)$ (c.f. Fig. \ref{fig:pathgraph}). We are then interested in the setting $\beta \rightarrow \infty$.
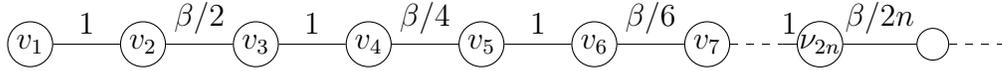
\begin{figure}[H]
\begin{tikzpicture}
	\foreach \i [evaluate=\i as \j using int(\i+1)] in {0,...,6} {
		\node[draw, circle, minimum size=17pt, inner sep=0pt] (v\j) at (\i*1.5,0) {$v_{\j}$};
	}
	
	\draw (v1) -- node[above] {$1$} (v2);
	\draw (v2) -- node[above] {$\beta/2$} (v3);
	\draw (v3) -- node[above] {$1$} (v4);
	\draw (v4) -- node[above] {$\beta/4$} (v5);
	\draw (v5) -- node[above] {$1$} (v6);
	\draw (v6) -- node[above] {$\beta/6$} (v7);
	
	\draw[dashed] (v7) -- ++(1.0,0);
	
	\node[draw, circle, minimum size=12pt, inner sep=0pt] (x1) at (10.5,0) {$\nu_{2n}$};
	\draw (v7) ++(1.0,0) -- node[above] {$1$} (x1);
	
	\node[draw, circle, minimum size=12pt, inner sep=0pt] (x2) at (12,0) {};
	\draw (x1) -- node[above] {$\beta/2n$} (x2);
	
	\draw[dashed] (x2) -- ++(1.0,0);
\end{tikzpicture}
	\captionof{figure}{Path graph $G$}
\label{fig:pathgraph}
\end{figure}
 \noindent It is not hard to see, that the limit graph is given by the node set $\underline{V} = \{\mu_i\}_{i = 1}^\infty$ with $\mu_i = \{\nu_{2i },\nu_{2i +1 }\}$ for $i > 1$ and $\mu_1 = \{\nu_1\}$. Node weights are set to $\underline{m}(\mu_i) = 2$ if $i > 1$  and $\underline{\mu_1} = 1$. Edge weights are given as $\underline{a}(\mu_i, 
 \mu_{i + 1}) = 1$ (c.f. also Fig. \ref{fig:pathgraph_reduced} below). 
  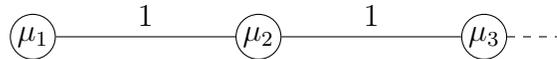
\begin{figure}[H]
\begin{tikzpicture}
	\foreach \i [evaluate=\i as \j using int(\i+1)] in {0,...,2} {
		\node[draw, circle, minimum size=17pt, inner sep=0pt] (v\j) at (\i*3.0,0) {$\mu_{\j}$};
	}
	
	\draw (v1) -- node[above] {$1$} (v2);
	\draw (v2) -- node[above] {$1$} (v3);
\draw[dashed] (v3) -- ++(1.0,0);
\end{tikzpicture}
	\captionof{figure}{Reduced path graph $\underline{G}$}
\label{fig:pathgraph_reduced}
\end{figure}

	\end{Ex}

We also note that we can now rigorously link our convergence results to our intuition about heat flow from Section \ref{intro}:

\begin{Cor}\label{norm_dynamical_convergence_strong}
	For any $t > 0$, we have $e^{-tL_\beta} \rightarrow J_\uparrow e^{-t\underline{L}} J_\downarrow $ strongly as $\beta \rightarrow \infty$.
\end{Cor}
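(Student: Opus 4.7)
My plan is to deduce semigroup convergence from the already-established strong resolvent convergence by invoking the spectral theorem, exploiting the fact that $L_\beta$ and $\underline{L}$ are both self-adjoint and positive semi-definite. This is standard for self-adjoint operators, but some care is required because the limit object $J_\uparrow \underline{L} J_\downarrow$ does not act on $\ell^2(G)$ but rather through the subspace $P\ell^2(G)$, so the convergence is a \emph{generalized} strong resolvent convergence in the sense of Kato/Reed--Simon.

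First, I would recast Theorem \ref{undir_appr_thm_strongly} in the form exhibited in its proof: the operators $L_\beta$ are self-adjoint, uniformly bounded below by $0$, and converge in the generalized strong resolvent sense to the self-adjoint operator $T = P L_{E\setminus \hat E} P$ defined on the closed subspace $P \ell^2(G)$ (where $P = J_\uparrow J_\downarrow$). Moreover $T$ is unitarily equivalent to $\underline{L}$ under the partial isometry $J_\uparrow : \ell^2(\underline{G}) \to P\ell^2(G)$ (with inverse $J_\downarrow \restriction_{P\ell^2(G)}$), since $J_\downarrow J_\uparrow = I_{\ell^2(\underline{G})}$ and $J_\downarrow L_{E\setminus \hat E} J_\uparrow = \underline{L}$ as verified in the proof of Theorem \ref{undir_appr_thm_strongly}.

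Second, I would apply the classical functional-calculus consequence of (generalized) strong resolvent convergence: for any bounded continuous function $f:\mathbb{R}\to\mathbb{C}$, the operators $f(L_\beta)$ converge strongly to $P f(T) P$. (This can be cited from the Trotter--Kato semigroup theorem, or derived directly by approximating $f$ uniformly on the common lower-bounded spectra using Stone--Weierstrass on resolvent polynomials $(x-z)^{-1}, (x-\bar z)^{-1}$ and using uniform boundedness of $\{f(L_\beta)\}$.) Choosing $f(x)=e^{-tx}$, which is bounded and continuous on $[0,\infty) \supseteq \bigcup_\beta \sigma(L_\beta)\cup\sigma(T)$, yields
\begin{equation*}
e^{-t L_\beta} \;\longrightarrow\; P\, e^{-tT}\, P \quad \text{strongly as } \beta\to\infty.
\end{equation*}

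Third, I would simplify the right-hand side using the identifications from step one. Since $J_\downarrow J_\uparrow = I_{\ell^2(\underline{G})}$ and $T\restriction_{P\ell^2(G)} = J_\uparrow \underline{L} J_\downarrow$, the spectral calculus gives $e^{-tT}\restriction_{P\ell^2(G)} = J_\uparrow e^{-t\underline{L}} J_\downarrow$, and using $P = J_\uparrow J_\downarrow$ together with $J_\downarrow J_\uparrow = I$ once more collapses $P e^{-tT} P$ to $J_\uparrow e^{-t\underline{L}} J_\downarrow$, establishing the claim. The main technical point—and arguably the only real obstacle—is the justification of the functional-calculus step when the limit lives on a proper subspace; once one is willing to invoke the generalized Trotter--Kato framework (or to carry out the Stone--Weierstrass approximation by hand on the uniformly lower-bounded spectra), the rest is a routine algebraic simplification.
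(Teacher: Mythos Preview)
Your proof is correct and takes a somewhat different route from the paper's. The paper dispatches the corollary in two lines by observing that each $L_\beta$ is bounded and non-negative, hence generates a bounded holomorphic semigroup, and then citing a black-box result from the approximation theory of degenerate semigroups (Arendt, \emph{Approximation of degenerate semigroups}, Theorem~5.2) which converts generalized strong resolvent convergence of generators into strong convergence of the associated semigroups. Your argument instead exploits self-adjointness directly: you approximate $x\mapsto e^{-tx}\in C_0([0,\infty))$ uniformly by polynomials in the resolvent and pass the strong limit through the functional calculus, carefully checking that the degenerate limit object $P(T-z)^{-1}P$ behaves multiplicatively so that the Stone--Weierstrass step goes through. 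This is more hands-on and entirely self-contained, requiring only the spectral theorem rather than an external semigroup result; the trade-off is that it is tied to the self-adjoint (undirected) setting, whereas the holomorphic-semigroup machinery the paper invokes is what one would reach for in the non-self-adjoint directed case treated later.
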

\begin{proof}
Since $L_\beta$ is bounded and non-negative, it generates a bounded holomorphic semigroup.
The claim is then a direct consequence of established results in approximation theory of degenerate semigroups, which guarantee that for uniformly bounded holomorphic semigroups, generalized strong resolvent convergence of generators translates to strong convergence of the associated semigroup \cite[Theorem 5.2]{arendt2001approximation}. 
\end{proof}
	
	\subsection{Norm Resolvent Convergence}\label{undirected_norm_resolvent_convergence}
We are now interested in determining under which conditions exactly, we can turn the \emph{strong} convergence of (\ref{strong_undirected_resolvent_closeness}) into norm resolvent convergence.
To this end, we first recall the definition of a spectral gap:
\begin{Def}The spectral gap of a non-negative operator $L$ is
$
	\gamma(L): = \inf\limits_{\lambda \neq 0} \sigma(L).
$
\end{Def}
Below, we will give a norm estimate on the difference
of $	(L_\beta - z I_{\ell^2(G)})^{-1}$  and $ J^\uparrow  (\underline{L} - z I_{\ell^2(\underline{G})})^{-1} J^\downarrow$ in terms of the spectral gap of $L_{\hat{E},\beta}$.
We  first establish a Lemma:

\begin{Lem}\label{conv_to_projection}
	 With $P$ the orthogonal spectral projection onto   $\ker(L_{\hat{E}})$, we have
	\begin{align}\label{convergence_to_projection}
		\left\|( L_{\hat{E},\beta} - z)^{-1} - P/(-z) \right\| =
1/|\gamma(L_{\hat{E},\beta}) - z|.
	\end{align}
	Moreover, with $\hat{E} = \cup_i \hat{E}_i$ as in Definition \ref{clusters}, we have $\gamma(L_{\hat{E}, \beta}) = \inf_{i \in I} \gamma(L_{\hat{E}_i, \beta})$.
\end{Lem}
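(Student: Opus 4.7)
The plan is to exploit the self-adjointness and non-negativity of the undirected perturbing Laplacian $L_{\hat{E},\beta}$, which ensures that $P$ — the orthogonal projection onto $\ker(L_{\hat{E},\beta}) = \ker(L_{\hat{E}})$ — is precisely the spectral projection of $L_{\hat{E},\beta}$ onto the eigenvalue $0$. In particular $P$ commutes with $L_{\hat{E},\beta}$ and therefore with $(L_{\hat{E},\beta}-z)^{-1}$, and on $\Range(P)$ the operator $L_{\hat{E},\beta}$ acts as zero. Consequently
\begin{equation*}
P \, (L_{\hat{E},\beta}-z)^{-1} \;=\; -\tfrac{1}{z}P,
\end{equation*}
and writing $I = P + (I-P)$ we may decompose
\begin{equation*}
(L_{\hat{E},\beta}-z)^{-1} - \tfrac{P}{-z} \;=\; (I-P)\,(L_{\hat{E},\beta}-z)^{-1}.
\end{equation*}

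Next, I would restrict $L_{\hat{E},\beta}$ to $(I-P)\ell^2(G) = \ker(L_{\hat{E}})^\perp$. On this reducing subspace the spectrum is exactly $\sigma(L_{\hat{E},\beta})\setminus\{0\}$, hence is contained in $[\gamma(L_{\hat{E},\beta}),\infty)$ by the definition of the spectral gap. The spectral theorem for bounded self-adjoint operators then yields
\begin{equation*}
\bigl\|(I-P)(L_{\hat{E},\beta}-z)^{-1}\bigr\| \;=\; \sup_{\lambda \in \sigma(L_{\hat{E},\beta})\setminus\{0\}}\frac{1}{|\lambda - z|},
\end{equation*}
which, for $z$ in the (implicit) region where $\lambda\mapsto|\lambda-z|$ is non-decreasing on $[\gamma(L_{\hat{E},\beta}),\infty)$ — e.g. for $z$ with $\Re(z)\le \gamma(L_{\hat{E},\beta})$, in particular for any $z\notin\mathbb{R}_+$ used in Theorem \ref{undir_appr_thm_strongly} — attains its supremum at $\lambda=\gamma(L_{\hat{E},\beta})$, giving the claimed equality $1/|\gamma(L_{\hat{E},\beta})-z|$.

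For the second assertion, I would invoke the node-disjointness property (ii) of Definition \ref{clusters}: the sets $\hat{V}_i$ are mutually disjoint, so the Hilbert space decomposes orthogonally as
\begin{equation*}
\ell^2(G) \;=\; \bigoplus_{i\in I}\ell^2(\hat V_i, m) \;\oplus\; \ell^2\!\bigl(V\setminus \textstyle\bigcup_i \hat V_i, m\bigr),
\end{equation*}
and inspection of the formula for $L_{\hat E,\beta}$ in \eqref{split_laplacians} shows that it maps each summand to itself, acting as $L_{\hat E_i,\beta}$ on $\ell^2(\hat V_i,m)$ and as zero on the remaining complement. Hence
\begin{equation*}
L_{\hat E,\beta} \;=\; \bigoplus_{i\in I} L_{\hat E_i,\beta} \;\oplus\; 0,
\end{equation*}
so that $\sigma(L_{\hat E,\beta})=\{0\}\cup\bigcup_{i\in I}\sigma(L_{\hat E_i,\beta})$, and each summand contributes $0$ to the spectrum (since every cluster is connected by (iii) of Definition \ref{clusters}). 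Taking the infimum of nonzero spectral values immediately gives $\gamma(L_{\hat E,\beta})=\inf_{i\in I}\gamma(L_{\hat E_i,\beta})$.

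The main obstacle I anticipate is the precise handling of the supremum: to obtain equality rather than a mere inequality $\le 1/|\gamma-z|$, one must restrict $z$ to a region where the minimum of $|\lambda-z|$ over $[\gamma,\infty)$ is attained at the endpoint — this is the standard situation used in the application (spectral parameters in a fixed compact subset of $\mathbb{C}\setminus\mathbb{R}_+$), and should be made explicit when invoking the lemma.
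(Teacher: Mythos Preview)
Your argument is correct and follows essentially the same route as the paper's proof, which simply reads ``a straightforward application of the spectral theorem'' for the first claim and invokes the decomposition $L_{\hat E,\beta}=\sum_{i\in I}L_{\hat E_i,\beta}$ into pairwise commuting summands for the second. Your version is in fact more careful: you correctly flag that the \emph{equality} in \eqref{convergence_to_projection} requires $\Re(z)\le\gamma(L_{\hat E,\beta})$ so that $\lambda\mapsto|\lambda-z|$ is minimized on $[\gamma,\infty)$ at the left endpoint, whereas for general $z$ only the inequality $\le 1/\dist(z,\sigma(L_{\hat E,\beta})\setminus\{0\})$ is available; the paper glosses over this, but in every application (Theorem~\ref{undir_appr_thm} and Corollary~\ref{norm_dynamical_convergence}) $z$ is fixed while $\gamma\to\infty$, so the hypothesis is eventually satisfied.
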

\begin{proof}
This is a straightforward application of the spectral theorem. The claim  $\gamma(L_{\hat{E}, \beta}) = \inf_{i \in I} \gamma(L_{\hat{E}_i, \beta})$ follows, since $L_{\hat{E}, \beta} = \sum_{i \in I } L_{\hat{E}_i, \beta}$ and elements of the family $\{L_{\hat{E}_i, \beta}\}_{i \in I}$ are pairwise commuting.
\end{proof}
	The norm resolvent estimate we then prove is the following :

\begin{Thm}\label{undir_appr_thm}
Let $\underline{L}$ be the Laplacian associated to $\underline{G}$. For any $z \in \mathds{C} \setminus [0, \infty)$:
	\begin{equation}\label{resolvent_closeness_app}
		\left\|(L_\beta - z I_{\ell^2(V)})^{-1} -  J^\uparrow  (\underline{L} - z I_{\ell^2(\underline{V})})^{-1} J^\downarrow\right\| 
\lesssim	\left\|( L_{\hat{E},\beta} - z)^{-1} - P/(-z) \right\|  \lesssim
			 1/\gamma(L_{\hat{E}, \beta}).
	\end{equation}
\end{Thm}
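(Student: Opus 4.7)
The plan is to derive an explicit operator identity for the difference $R_\beta - J_\uparrow\underline{R}J_\downarrow$ (with $R_\beta := (L_\beta - z\IG)^{-1}$ and $\underline{R} := (\underline{L}-z\IGu)^{-1}$) that factors through the spectral projection $I-P$, and then to bound each factor in the resulting product by Lemma \ref{conv_to_projection}.

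First I would derive the key identity. Starting from $J_\uparrow\underline{R}J_\downarrow = R_\beta(L_\beta - z)J_\uparrow\underline{R}J_\downarrow$ I exploit two structural facts: since $\mathrm{Ran}(J_\uparrow)\subseteq\ker(L_{\hat{E},\beta})$, the operator $L_\beta - z$ reduces to $L_{E\setminus\hat{E}} - z$ on the range of $J_\uparrow\underline{R}J_\downarrow$; and from the proof of Theorem \ref{undir_appr_thm_strongly} one has $PL_{E\setminus\hat{E}}P = J_\uparrow\underline{L}J_\downarrow$ together with $J_\downarrow J_\uparrow = \IGu$. Decomposing $\IG = P + (I-P)$ and using these relations one finds $P(L_{E\setminus\hat{E}} - z)J_\uparrow\underline{R}J_\downarrow = J_\uparrow(\underline{L}-z)\underline{R}J_\downarrow = P$, which yields
\begin{equation*}
R_\beta - J_\uparrow\underline{R}J_\downarrow \;=\; R_\beta(I-P)\bigl[\IG - L_{E\setminus\hat{E}}J_\uparrow\underline{R}J_\downarrow\bigr].
\end{equation*}
This is the step I expect to be the main obstacle: the rearrangement requires careful book-keeping of how $P$, $J_\uparrow$, $J_\downarrow$, and $L_{E\setminus\hat{E}}$ interact, and it is the mechanism that makes the factor $I-P$ appear (so that the spectral gap of $L_{\hat{E},\beta}$ can be used).

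Next I would rewrite $R_\beta(I-P)$ so that the spectral gap of $L_{\hat{E},\beta}$ appears explicitly. Writing $I - P = (L_{\hat{E},\beta}-z)R_{\hat{E},\beta}(I-P)$ with $R_{\hat{E},\beta} := (L_{\hat{E},\beta}-z)^{-1}$, applying the second resolvent identity in the form $R_\beta(L_{\hat{E},\beta}-z) = \IG - R_\beta L_{E\setminus\hat{E}}$, and using $R_{\hat{E},\beta}P = -P/z$ (so that $R_{\hat{E},\beta}(I-P) = R_{\hat{E},\beta} - P/(-z)$), I arrive at
\begin{equation*}
R_\beta(I-P) \;=\; \bigl(\IG - R_\beta L_{E\setminus\hat{E}}\bigr)\bigl[R_{\hat{E},\beta} - P/(-z)\bigr].
\end{equation*}

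Combining the two displayed identities, the norm of $R_\beta - J_\uparrow\underline{R}J_\downarrow$ is bounded by $(1+\|R_\beta\|\|L_{E\setminus\hat{E}}\|)\,\|R_{\hat{E},\beta} - P/(-z)\|\,(1+\|L_{E\setminus\hat{E}}\|\|J_\uparrow\|\|\underline{R}\|\|J_\downarrow\|)$. All three factors are uniformly controlled in $\beta$: $\|L_{E\setminus\hat{E}}\|\leq 2C$ by Proposition \ref{tame_Laplacians}, both $\|R_\beta\|$ and $\|\underline{R}\|$ are bounded by $1/\dist(z,[0,\infty))$ since their generators are non-negative, and $\|J_\uparrow\|=1$, $\|J_\downarrow\|\leq 1$ by the discussion after Definition \ref{proj_interp_undirected}. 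This yields the first $\lesssim$ in the theorem; the second is then immediate from Lemma \ref{conv_to_projection}.
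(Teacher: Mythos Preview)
Your proof is correct and takes a genuinely different route from the paper's. You derive an \emph{exact} factorisation
\[
R_\beta - J_\uparrow\underline{R}J_\downarrow = \bigl(\IG - R_\beta L_{E\setminus\hat{E}}\bigr)\bigl[R_{\hat{E},\beta} - P/(-z)\bigr]\bigl[\IG - L_{E\setminus\hat{E}}J_\uparrow\underline{R}J_\downarrow\bigr],
\]
which isolates the small factor $R_{\hat{E},\beta}-P/(-z)$ directly and yields the bound for \emph{every} $z\in\mathds{C}\setminus[0,\infty)$ without any largeness assumption on $\gamma(L_{\hat{E},\beta})$. The paper instead rewrites $R_\beta$ as $[-zR_{\hat{E},\beta}]\bigl(L_{E\setminus\hat{E}}[-zR_{\hat{E},\beta}]-z\bigr)^{-1}$, shows via block inversion and the identity theorem that $P(L_{E\setminus\hat{E}}P-z)^{-1}P = J_\uparrow\underline{R}J_\downarrow$, and then compares the two using a Neumann series that only converges once $\gamma(L_{\hat{E},\beta})\gg 1$ (the estimate for small $\gamma$ being trivial). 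Your argument is shorter, avoids the Neumann/holomorphic-continuation machinery, and makes the structural reason for the appearance of $I-P$ transparent; the paper's approach, on the other hand, produces the explicit $z$-dependent constants of (\ref{undir_bounding_eq_I})--(\ref{undir_bounding_eq_II}) that are later fed into the contour integral of Corollary \ref{norm_dynamical_convergence}. Your constants $(1+\|R_\beta\|\|L_{E\setminus\hat{E}}\|)(1+\|L_{E\setminus\hat{E}}\|\|\underline{R}\|)$ would serve that purpose equally well.
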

\begin{proof}
	By a straightforward calculation, we have that the Laplacian $\underline{L}$  associated via Definition \ref{laplacian_definition} to the graph $\underline{G}$ of Definition \ref{und_coarse_G_def}  is exactly given as $	\underline{L} = J^\downarrow  L_{E \setminus \hat{E}}  J^\uparrow$,
	with projection $J^\downarrow$ and interpolation $J^\uparrow$ as introduced in Definition \ref{proj_interp_undirected}. We are thus interested in establishing 
	\begin{equation}
		\left\|(L_\beta - z    I_{\ell^2(V)})^{-1} -  J^\uparrow  (J^\downarrow  L_{E \setminus \hat{E}}  J^\uparrow - z    I_{\ell^2(\underline{V})})^{-1} J^\downarrow\right\|\leq 
		C_{L_{E\setminus \hat{E}}}	\left\|( L_{\hat{E},\beta} - z)^{-1} - P/(-z) \right\|
	\end{equation}
	From the proof of Theorem \ref{undir_appr_thm_strongly}, we know that 
	\begin{align}
		J_\uparrow  (J_\downarrow  L_{E \setminus \hat{E}}  J_\uparrow - z    I_{\ell^2(\underline{G})})^{-1} J_\downarrow 
		=
P (P  L_{E \setminus \hat{E}}  P- z    I_{\ell^2(G)})^{-1}JP.
	\end{align}
Thus it suffices to prove
		\begin{equation}
		\left\|(L_\beta - z I_{\ell^2(V)})^{-1} -  P  (PL_{E \setminus \hat{E}} P - z I_{\ell^2(V)})^{-1} P\right\|\leq  
			C_{L_{E\setminus \hat{E}}}	\left\|( L_{\hat{E},\beta} - z)^{-1} - P/(-z) \right\| 
	\end{equation}
	Let us write 
	\begin{align}
		L_{E \setminus \hat{E}} P -z I_{\ell^2(G)} = 
			\begin{pmatrix}
				PL_{E \setminus \hat{E}} P -zP & 0   \\
			QL_{E \setminus \hat{E}} P    & -z Q
		\end{pmatrix}.
	\end{align}
With $ Q = I_{\ell^2(V)} - P$	we then find 
	\begin{align}\label{schur_inverse_resolvent}
			(L_{E \setminus \hat{E}} P -z I_{\ell^2(G)})^{-1}
				=
				\begin{pmatrix}
					 (PLP - zP)^{-1} P & 0 \\
					\frac{1}{z} Q L P  (PLP - zP)^{-1} P & -\frac{1}{z} Q
				\end{pmatrix}
	\end{align}
	by well known formulas for lower triangular block-inversion. Thus $	(L_{E \setminus \hat{E}} P -z I_{\ell^2(G)})$ is invertible on  $\ell^2(G)$ whenever $(PLP - zP)$ is invertible on  $\ell^2(\underline{G})$ This is true   in particular whenever $z \in \mathds{C}\setminus[0,\infty)$. Moreover, since $P$ is an orthogonal projection we may choose a $z \in \mathds{C}\setminus [0,\infty)$ so that  $|z| > \|L_{E \setminus \hat{E}}\| \geq  \|L_{E \setminus \hat{E}}P\| \geq \|PL_{E \setminus \hat{E}}P\|$. Then x $PL_{E \setminus \hat{E}} P - z$ and $L_{E \setminus \hat{E}} P - z$  are invertible on $\ell^2(G)$ by a Neumann series argument:
\begin{align}
	&P(L_{E \setminus \hat{E}} P -z I_{\ell^2(G)})^{-1} = P \frac{1}{z} \sum\limits_{k = 0}^{\infty} \left[ \frac{L_{E \setminus \hat{E}} P}{z}\right]^k \\
	= &P \frac{1}{z} \sum\limits_{k = 0}^{\infty} \left[ \frac{P L_{E \setminus \hat{E}} P}{z}\right]^k = P(P L_{E \setminus \hat{E}} P -z I_{\ell^2(G)})^{-1}.
\end{align}
By the identity theorem for Banach-valued holomorphic functions (e.g.  \cite[Proposition A.2]{Arendt2001}), it then follows that $P(L_{E \setminus \hat{E}} P -z I_{\ell^2(G)})^{-1} = P(P L_{E \setminus \hat{E}} P -z I_{\ell^2(G)})^{-1}$ for all $z \in \mathds{C}\setminus[0,\infty)$. Similarly, we also find $P(L_{E \setminus \hat{E}} P -z)^{-1} Q = 0$.

\medskip

Our goal is thus now to approximate $P(L_{E \setminus \hat{E}} P -z)^{-1} $ with $( L_{E \setminus \hat{E}} + L_{\hat{E},\beta} -z)^{-1} = [-z(L_{\hat{E},\beta} -z)^{-1}](L_{E \setminus \hat{E}}[-z(L_{\hat{E},\beta} -z)^{-1}] -z)^{-1} $. To this end, we first note for $\gamma(L_{E \setminus \hat{E}})  \gg 1$ sufficiently large, we by Lemma \ref{conv_to_projection}:
\begin{align}
	&\| P (L_{E \setminus \hat{E}} P -z)^{-1} - (-z)(L_{\hat{E},\beta} - z)^{-1} (L_{E \setminus \hat{E}} P -z)^{-1}   \|\\
	\leq& \| (L_{E \setminus \hat{E}} P - z)^{-1}\| 
	\cdot
	\| P - (-z)(L_{\hat{E},\beta} - z)^{-1}  \| \\
	=&  \| (L_{E \setminus \hat{E}} P - z)^{-1}\| 
	\cdot |z|/|\gamma(L_{E \setminus \hat{E}}) - z|
\end{align}
Next we note that for $\gamma(L_{\hat{E},\beta}) \gg 1$ sufficiently large, we have that $-z L_{E \setminus \hat{E}} (L_{\hat{E},\beta} -z)^{-1} - z = L_{E \setminus \hat{E}} P -z + L_{E \setminus \hat{E}} [-z (L_{\hat{E},\beta} -z)^{-1}- P]$ is boundedly invertible.
Indeed, above we already esgtablished bounded invertibility of $L_{E \setminus \hat{E}} P -z $ on $\ell^2(G)$. Next we note that we have 
$
	\|L_{E \setminus \hat{E}} [-z (L_{\hat{E},\beta} -z)^{-1}- P]\| \leq \|L_{E \setminus \hat{E}} \| \cdot |z|/|\gamma(L_{\hat{E},\beta})-z|
$,
so that the above expression can be made arbitrarily small by choosing $\gamma(L_{\hat{E},\beta}) \gg 1$ large enough. 

 A Neumann argument then establishes invertibility of $-z L_{E \setminus \hat{E}} (L_{\hat{E},\beta} -z)^{-1} - z$:
\begin{align}
	&(-z L_{E \setminus \hat{E}} (L_{\hat{E},\beta} -z)^{-1} - z)^{-1}\\
	 = &(L_{E \setminus \hat{E}} P -z )^{-1} \sum\limits_{k = 0}^{\infty} [-(L_{E \setminus \hat{E}} P -z )^{-1} L_{E \setminus \hat{E}}  [-z (L_{\hat{E},\beta} -z)^{-1}- P]]^k
\end{align}
Hence for $\gamma(L_{\hat{E},\beta}) \gg 1$ sufficiently large, $-z L_{E \setminus \hat{E}} (L_{\hat{E},\beta} -z)^{-1} (-z L_{E \setminus \hat{E}} (L_{\hat{E},\beta} -z)^{-1} - z)^{-1}$ is well defined and
$
[L_{E \setminus \hat{E}} + L_{\hat{E},\beta} -z]\cdot[-z(L_{\hat{E},\beta} -z)^{-1} (-z L_{E \setminus \hat{E}} (L_{\hat{E},\beta} -z)^{-1} - z)^{-1}] = Id_{\ell^2(G)}$.
Furthermore we may estimate:
\begin{align}
	&\left\| (L_{E \setminus \hat{E}} + L_{\hat{E},\beta} - z I)^{-1} - P ( L_{E \setminus \hat{E}} P - z I)^{-1} P \right\|\\
	=& \| [-z(L_{\hat{E},\beta} -z)^{-1} (-z L_{E \setminus \hat{E}} (L_{\hat{E},\beta} -z)^{-1} - z)^{-1}] -  P ( L_{E \setminus \hat{E}} P - z I)^{-1}    \| \\
	\leq &     
	\| [-z(L_{\hat{E},\beta} -z)^{-1} (-z L_{E \setminus \hat{E}} (L_{\hat{E},\beta} -z)^{-1} - z)^{-1}] -  [-z(L_{\hat{E},\beta} -z)^{-1}( L_{E \setminus \hat{E}} P - z I)^{-1} ]   \| \\
	+& \| [-z(L_{\hat{E},\beta} -z)^{-1}( L_{E \setminus \hat{E}} P - z I)^{-1}  -  P ( L_{E \setminus \hat{E}} P - z I)^{-1}    \|
\end{align}

For the first term, we find
\begin{align}
& 	\| [-z(L_{\hat{E},\beta} -z)^{-1} (-z L_{E \setminus \hat{E}} (L_{\hat{E},\beta} -z)^{-1} - z)^{-1}] -  [-z(L_{\hat{E},\beta} -z)^{-1}( L_{E \setminus \hat{E}} P - z I)^{-1} ]   \|\\
\leq & |z| \cdot \| (L_{\hat{E},\beta} -z)^{-1} \| \cdot \|
(-z L_{E \setminus \hat{E}} (L_{\hat{E},\beta} -z)^{-1} - z)^{-1} - ( L_{E \setminus \hat{E}} P - z I)^{-1} 
  \|\\
 \leq & |z| /\dist(z, \mathds{R}_+) \cdot
 \left\|(L_{E \setminus \hat{E}} P -z )^{-1} \sum\limits_{k = 1}^{\infty} [-(L_{E \setminus \hat{E}} P -z )^{-1} L_{E \setminus \hat{E}}  [-z (L_{\hat{E},\beta} -z)^{-1}- P]]^k\right\|\\
 \leq & 2|z| /\dist(z, \mathds{R}_+) \cdot \|   (L_{E \setminus \hat{E}} P -z )^{-1}     \|^2
 \cdot
 \|L_{E \setminus \hat{E}}\| \cdot \|-z (L_{\hat{E},\beta} -z)^{-1}- P\|.
\end{align}
From (\ref{schur_inverse_resolvent}) we infer $ \|   (L_{E \setminus \hat{E}} P -z )^{-1}     \| \leq [1/\dist(z, \mathds{R}_+) + 1/|z| + \|Q L P \|/(\dist(z, \mathds{R}_+) \cdot |z|) ]$ and thus together with Lemma \ref{conv_to_projection} we find
\begin{align}
	& 	\| [-z(L_{\hat{E},\beta} -z)^{-1} (-z L_{E \setminus \hat{E}} (L_{\hat{E},\beta} -z)^{-1} - z)^{-1}] -  [-z(L_{\hat{E},\beta} -z)^{-1}( L_{E \setminus \hat{E}} P - z I)^{-1} ]   \|\\
	\leq & \frac{2 |z|^2}{\dist(z, \mathds{R}_+)}  \left(\frac{1}{\dist(z, \mathds{R}_+)} + \frac{1}{|z|} + \frac{\|QLP\|}{|z| \dist(z, \mathds{R}_+)} \right) \frac{1}{|\gamma(L_{\hat{E},\beta}) - z| }.\label{undir_bounding_eq_I}
\end{align}

For the remaining second term, we find 
\begin{align}
	&\| [-z(L_{\hat{E},\beta} -z)^{-1}( L_{E \setminus \hat{E}} P - z I)^{-1}  -  P ( L_{E \setminus \hat{E}} P - z I)^{-1}    \|\\
\leq& 
\left(\frac{1}{\dist(z, \mathds{R}_+)} + \frac{1}{|z|} + \frac{\|QLP\|}{|z| \dist(z, \mathds{R}_+)} \right)
\cdot \frac{|z|}{|\gamma(L_{\hat{E},\beta}) - z| }.\label{undir_bounding_eq_II}
\end{align}
Thus in total we have indeed found:$ 
	\|(L_\beta - z I_{\ell^2(V)})^{-1} -  J^\uparrow  (\underline{L} - z I_{\ell^2(\underline{V})})^{-1} J^\downarrow\| 
\lesssim$ \\
\noindent$	\left\|( L_{\hat{E},\beta} - z)^{-1} - P/(-z) \right\|  \lesssim
	1/\gamma(L_{\hat{E}, \beta})
$.
\end{proof}

\medskip

\begin{Cor}\label{finite_dim_cor}
	From the above arguments we find in the setting of (\ref{prototypical_example}) that if there is only a finite number of clusters $\{E_i\}_{i=1}^N$ being considered, then since $\gamma(L_{\hat{E},\beta}) = \beta \cdot \gamma(L_{\hat{E},1})$ and we know that by finiteness $\gamma(L_{\hat{E},1}) > 0$, we have
\begin{align}
	\|
	(L_{E\setminus \hat{E}} + \beta  L_{ \hat{E}}  - z)^{-1} - J_\uparrow (\underline{L} - z)^{-1}J_\uparrow
	\|
	\lesssim 1/\beta.
\end{align}
\end{Cor}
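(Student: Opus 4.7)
The plan is to directly combine Theorem \ref{undir_appr_thm} with a scaling argument for $\gamma(L_{\hat{E},\beta})$. Theorem \ref{undir_appr_thm} already furnishes the bound
\[
\|(L_\beta - z I_{\ell^2(V)})^{-1} - J_\uparrow(\underline{L} - z I_{\ell^2(\underline{V})})^{-1} J_\downarrow\| \lesssim 1/\gamma(L_{\hat{E},\beta}),
\]
so the task reduces to showing $\gamma(L_{\hat{E},\beta}) \gtrsim \beta$ with a constant independent of $\beta$.

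First, I would observe that, in the prototypical example setting (\ref{prototypical_example}), the cluster Laplacian defined in (\ref{split_laplacians}) scales homogeneously in $\beta$. Indeed, writing $L_\beta = L_{E\setminus\hat{E}} + \beta L_{\hat{E}}$ as in the statement of the corollary, the edge-weight contribution to $L_{\hat{E},\beta}$ extracted by $\chi_{\hat{E}}$ is exactly $\beta \cdot a(x,y)$ on $\hat{E}$, so $L_{\hat{E},\beta} = \beta L_{\hat{E}}$ as bounded self-adjoint operators on $\ell^2(G)$. By the spectral mapping theorem applied to multiplication by $\beta$, this yields $\gamma(L_{\hat{E},\beta}) = \beta \cdot \gamma(L_{\hat{E}})$.

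Next, I would verify $\gamma(L_{\hat{E}}) > 0$ using the second claim of Lemma \ref{conv_to_projection}, which gives $\gamma(L_{\hat{E}}) = \inf_{i \in I} \gamma(L_{\hat{E}_i})$. Under the finiteness assumption $I = \{1, \ldots, N\}$ the infimum is a minimum, so it is enough to verify $\gamma(L_{\hat{E}_i}) > 0$ for each individual cluster. Each $L_{\hat{E}_i}$ is a self-adjoint, positive semi-definite Laplacian on the connected subgraph $(V_i, \hat{E}_i)$ determined by Definition \ref{clusters}(iii); its kernel is one-dimensional (spanned by $\chi_{V_i}$, by the undirected kernel characterization of Section \ref{chap:kernels}), and in the finite-dimensional case the next eigenvalue of the finite spectrum is strictly positive. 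Combining this with the scaling then gives $\gamma(L_{\hat{E},\beta}) \gtrsim \beta$, and substituting back into Theorem \ref{undir_appr_thm} produces the claimed $1/\beta$ rate.

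The one conceptual point I expect to be mildly subtle is the need to interpret the clusters as finite-dimensional (or at least to satisfy a uniform lower bound on node masses within each cluster): while Definition \ref{clusters}(i) only imposes finite \emph{mass}, positivity of the gap $\gamma(L_{\hat{E}_i})$ requires that $0$ be an isolated point of $\sigma(L_{\hat{E}_i})$. Example \ref{zero_is_accumulation_point} shows that $0$ can otherwise be an accumulation point of the spectrum for infinite weighted graphs with finite total mass, which would break the asymptotic $1/\beta$ rate. Once finite-dimensionality of each $\hat{E}_i$ is assumed, everything else is routine.
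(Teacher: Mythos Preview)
Your proposal is correct and follows exactly the approach the paper takes: the corollary's ``proof'' is embedded in its own statement, and you have simply unpacked the two ingredients (linear scaling of the gap under the homogeneous weight scaling of~(\ref{prototypical_example}), and positivity of the infimum via Lemma~\ref{conv_to_projection} together with finiteness of~$I$) with more care than the paper does. Your closing remark about finite mass versus finite dimensionality is a genuine and worthwhile caveat: the paper's phrase ``by finiteness $\gamma(L_{\hat{E},1}) > 0$'' tacitly assumes each cluster has finitely many vertices (so that each $L_{\hat{E}_i}$ acts on a finite-dimensional space and $0$ is isolated), whereas Definition~\ref{clusters}(i) literally only requires finite total mass.
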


Let us consider some examples to illustrate these results:

\begin{Ex}
In Example \ref{one_edge_div}, only one cluster (namely the edge $b \leftrightarrow c$)  is being considered. Thus Corollary \ref{finite_dim_cor} applies, and we also have \emph{norm} resolvent convergence, with norm-convergence speed $\mathcal{O}(\beta^{-1})$.
\end{Ex}

\begin{Ex}
In the setting of Example \ref{zero_is_accumulation_point}, the perturbing Laplacian $L_{\hat{E},\beta} = \beta \cdot L_{\hat{E},1}$ associated to the graph structure $(V,\hat{E})$, with edge weights $a(\nu_{2n-1},\nu_{2n}) =0$ and $a(\nu_{2n},\nu_{2n+1}) =\beta/(2n)$ has zero as an accumulation point in its spectrum. Thus $0 \in \sigma(L_{\hat{E},1})$ is not isolated and hence $0 = \gamma(L_{\hat{E},\beta}) = \beta \cdot \gamma(L_{\hat{E},1}) $. Thus we do not have \emph{norm}-reslvent convegence, but only \emph{strong}-resolvent convergence.
\end{Ex}

\begin{Ex}\label{clique_connectivity_example}
	  Theorem \ref{undir_appr_thm_strongly} is formualted in terms of the spectral gap of the perturbing Laplacain $L_{\hat{E},\beta}$ as opposed to edge-weights directly. Hence it allows us to consider settings of increasing connectivity where edge weights remain bounded. Instead we might increase connectivity by increasing the \emph{number} of edges inside clusters:
	 
\noindent	\begin{minipage}[b]{0.55\textwidth}
\noindent To this end, let us consider a graph $G_{N}$ with node set $V_N = \{\mu\} \cup \{\nu_i\}_{i = 1}^N$. We choose the corresponding edge set as $E_N = \hat{E}_N \cup \{(\mu,\nu_1), (\nu_1,\mu)\}$, with $\hat{E}_N$  so that $(V, \hat{E}_N)$ is exactly the $N$-clique $K_N$ (c.f. Fig. \ref{fig:a-k8}). All edge weights are chosen equal to one. We set the weight of node $\mu$ as $,m(\mu) = 1$ and the remaining weights as $m(\nu_n) = 1/N$ for $n \leq N$.  We now take $N \rightarrow \infty$ and compare the graph $G_N$ with the coarsified graph $\underline{G}$, where the clique $K_N$ is collapsed into a single node '$\nu$'. 
\end{minipage}
	\begin{minipage}[b]{0.5\textwidth}
	\centering
\begin{tikzpicture}[
	scale=2,
	every node/.style={circle, draw, minimum size=15pt, inner sep=0pt},
	labelnode/.style={draw=none, fill=none, inner sep=1pt}
	]

	\def\radius{0.5}
	\def\gap{1.2}

	\foreach \i in {1,...,8} {
		\node (v\i) at ({\radius * cos(180 + 360/8 * (\i - 1))}, {\gap + \radius * sin(180 + 360/8 * (\i - 1))}) {};
	}

	\node at (v1) {$\nu_1$};

	\node (a) at ({-\radius - 1}, \gap) {$\mu$};

	\node[labelnode] at ($(a)+(0,0.4)$) {(a) $G$};

	\draw (a) -- (v1);

	\foreach \i in {1,...,8} {
		\foreach \j in {1,...,8} {
			\ifnum\i<\j
			\draw (v\i) -- (v\j);
			\fi
		}
	}
	
	\node (nu) at (0,0) {$\nu$};

	\node (mu) at ({-\radius - 1}, 0) {$\mu$};

	\node[labelnode] at ($(mu)+(0,0.4)$) {(b) $\underline{G}$};

	\draw (mu) -- (nu);
	
\end{tikzpicture}

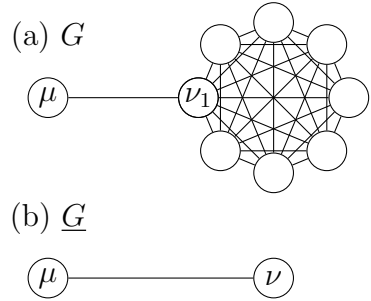
\captionof{figure}{$G$ with clique $K_8$ and reduced $\underline{G}$}
\label{fig:a-k8}
\end{minipage}

\noindent That is, the vertex set of $\underline{G}$ is
 given by $\underline{V} = \{a,\nu\}$. All node masses are set to one and the only remaining edge weight is $a(\mu, \nu) = 1$.
  Since the first non-trivial eigenvalue of an $N$-clique is given as $\lambda_1(K_N) = N$, we here hence find $\gamma(L_{\hat{E}}) = \lambda_1(K_N) = N$. Thus
\begin{align}\label{clique_implosion}
	\|  (L_N - z)^{-1} -     J_\uparrow  (\underline{L} - z)^{-1} J_\downarrow  \| \lesssim 1/N,
\end{align}
\noindent and we have generalized convergence of $L_N$ towards the Laplacian $\underline{L}$ on the graph $\underline{G}$ of Fig. \ref{fig:a-k8} (b),  as $N \rightarrow \infty$.
\end{Ex}

\begin{Rem}
We may here now note that a reasonable way to measure the connectivity in the set of clusters determined by $\hat{E}$ is to quantify the extent of the  difference '$(L_{\hat{E},\beta} -z)^{-1} - P/(-z)$'. If this difference is small, connectivity is high; and vice versa.
 Indeed, in the setting of Section \ref{undirected_strong_resolvent_convergence} this difference tends to zero strongly as edge-weights diverge. If it tends to zero in norm,  Lemma \ref{conv_to_projection} ensures, that  the traditional measure $\gamma(L_{\hat{E},\beta})$ of connectivity in the undirected graph  $G_{\beta} = (V, m, \hat{E}, a_\beta)$ diverges. Finally, as 	Example \ref{clique_connectivity_example} showcases,  the difference $\|(L_{\hat{E},\beta} -z)^{-1} - P/(-z)\| \lesssim 1/N$ also  captures  settings outside the case of divergent edge weights.
\end{Rem}

Theorem \ref{undir_appr_thm} also allows us to sharpen our heat semigroup convergence results of Corollary \ref{norm_dynamical_convergence_strong} above:
	\begin{Cor}\label{norm_dynamical_convergence}
	For any $t > 0$, we have $\|e^{-tL_\beta} - J_\uparrow e^{-t\underline{L}} J_\downarrow \| \lesssim f(\gamma(L_{\hat{E},\beta}) )$, with the function $F(\cdot)$ bounded continuous and decaying: $F(\gamma(L_{\hat{E},\beta})) \rightarrow 0$ as $\gamma(L_{\hat{E},\beta}) \rightarrow  \infty$.
	\end{Cor}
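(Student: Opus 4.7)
The plan is to combine the Dunford--Taylor representation of the semigroups with the resolvent estimate of Theorem \ref{undir_appr_thm}. Since $L_\beta$ and $\underline{L}$ are bounded positive self-adjoint operators, their resolvents are analytic on $\mathds{C}\setminus[0,\infty)$, so I would pick a single $\beta$-independent Hankel contour $\Gamma$ enclosing $[0,\infty)$ (two rays at $\Im z=\pm\epsilon$ with $\Re z\in[-\epsilon,\infty)$, joined by a small arc about the origin) and write both $e^{-tL_\beta}=\frac{1}{2\pi i}\int_\Gamma e^{-tz}(L_\beta-z)^{-1}\,dz$ and $J_\uparrow e^{-t\underline{L}}J_\downarrow=\frac{1}{2\pi i}\int_\Gamma e^{-tz}J_\uparrow(\underline{L}-z)^{-1}J_\downarrow\,dz$. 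Both representations are absolutely convergent, uniformly in $\beta$, thanks to the exponential decay of $|e^{-tz}|$ as $\Re z \to +\infty$ along $\Gamma$.

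Subtracting the two representations and applying Theorem \ref{undir_appr_thm} pointwise in $z$ would bound the integrand uniformly in $\beta$ by $|e^{-tz}|\cdot g(z)\cdot |\gamma(L_{\hat{E},\beta})-z|^{-1}$, where $g$ is a fixed rational function in $|z|$, $\dist(z,\mathds{R}_+)$ and $\|QL_{E\setminus\hat{E}}P\|$ read off from (\ref{undir_bounding_eq_I})--(\ref{undir_bounding_eq_II}). The only factor depending on $\beta$ is $|\gamma(L_{\hat{E},\beta})-z|^{-1}$, so my next step would be to split $\Gamma$ into: (i) the portion with $\Re z\le \gamma(L_{\hat{E},\beta})/2$ or $\Re z\ge 3\gamma(L_{\hat{E},\beta})/2$, where $|\gamma(L_{\hat{E},\beta})-z|\ge \gamma(L_{\hat{E},\beta})/2$ and the $z$-integration yields a contribution of order $1/\gamma(L_{\hat{E},\beta})$; and (ii) the intermediate strip $\Re z\in[\gamma(L_{\hat{E},\beta})/2,\,3\gamma(L_{\hat{E},\beta})/2]$, where the pinch $|\gamma(L_{\hat{E},\beta})-z|^{-1}\lesssim \epsilon^{-1}$ is more than compensated by the bound $|e^{-tz}|\le e^{-t\gamma(L_{\hat{E},\beta})/2}$.

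To turn this into a bound by a function $F$ that is bounded and continuous on the whole range $\gamma(L_{\hat{E},\beta})\in[0,\infty)$ (and not only for large arguments), I would combine the resulting estimate with the trivial contractivity bound $\|e^{-tL_\beta}-J_\uparrow e^{-t\underline{L}}J_\downarrow\|\le 2$, valid because both operators are contractions on $\ell^2(G)$ (the first since $L_\beta\ge0$ is self-adjoint, the second since $J_\uparrow$ is isometric and $\|J_\downarrow\|\le 1$). Setting $F(\gamma):=\min\bigl(2,\,C_1(t)/\gamma+C_2(t)\gamma^n e^{-t\gamma/2}\bigr)$, continuously extended by $F(0):=2$, then yields a bounded continuous function on $[0,\infty)$ vanishing at infinity, as required.

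The main obstacle is the contour pinch near $\Re z\sim\gamma(L_{\hat{E},\beta})$: naively $|\gamma(L_{\hat{E},\beta})-z|^{-1}$ is of size $1/\epsilon$ there and does not by itself supply the desired $1/\gamma(L_{\hat{E},\beta})$ decay. Resolving this requires exploiting the exponential decay of $|e^{-tz}|$ in that same strip, which is precisely the quantitative structure that the Hankel contour combined with Theorem \ref{undir_appr_thm} provides beyond the generalized strong resolvent convergence used in Corollary \ref{norm_dynamical_convergence_strong}.
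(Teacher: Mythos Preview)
Your proposal is correct and follows essentially the same approach as the paper: represent both semigroups via the holomorphic functional calculus along a fixed contour surrounding $[0,\infty)$, subtract, and bound the integrand pointwise by the explicit resolvent estimates (\ref{undir_bounding_eq_I})--(\ref{undir_bounding_eq_II}) from the proof of Theorem \ref{undir_appr_thm}. The only difference is in how the resulting $z$-integral is shown to decay in $\gamma(L_{\hat{E},\beta})$: the paper simply defines $F(\gamma)$ as (a constant times) the integral $\int_{-1}^\infty \frac{1+(1+x^2)^2}{\sqrt{(\gamma-x)^2+1}}\,e^{-tx}\,dx$ and invokes dominated convergence, whereas you carry out an explicit near/far splitting around $\Re z\approx\gamma$ and then cap by the trivial bound $2$ to enforce boundedness and continuity at small $\gamma$. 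Your route yields a more explicit decay profile (order $1/\gamma$ plus an exponentially small correction) at the cost of a bit more bookkeeping; the paper's route is shorter but gives no rate.
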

\begin{proof}
By means of the holomorphic functional calculus, we may write
\begin{align}
	e^{-tL} - J_\uparrow e^{-t\underline{L}} J_\downarrow = \frac{1}{2 \pi i } \oint_{\Gamma} e^{-tz} \left( 	(L_{E\setminus \hat{E}} + \beta  L_{ \hat{E}}  - z)^{-1} - J_\uparrow (\underline{L} - z)^{-1}J_\uparrow \right)dz.
\end{align}
for a suitable path $\Gamma$ encircling $[0, \infty) \subseteq \mathds{C}$. By the proof of Theorem \ref{undir_appr_thm}, we may bound $\|(L_{E\setminus \hat{E}} + \beta  L_{ \hat{E}}  - z)^{-1} - J_\uparrow (\underline{L} - z)^{-1}J_\uparrow \|$ by the sum of (\ref{undir_bounding_eq_I}) and (\ref{undir_bounding_eq_II}). Let us fix $\Gamma$ to be 
 to be a rectangle through the points $\{i,-1,-i, \infty\}$ c.f. Fig. \ref{int_und}. Then
\begin{align}
	&\|	e^{-tL} - J_\uparrow e^{-t\underline{L}} J_\downarrow\| \\
	\leq & \frac{1}{2 \pi} \oint_\Gamma \left(\frac{2 |z|^2}{\dist(z, \mathds{R}_+)} +1\right) \left(\frac{1}{\dist(z, \mathds{R}_+)} + \frac{1}{|z|} + \frac{\|QLP\|}{|z| \dist(z, \mathds{R}_+)} \right) \frac{|e^{-tz}|}{|\gamma(L_{\hat{E},\beta})  - z | }dz\\
	\lesssim & \oint_\Gamma \frac{1 + |z|^2}{|\gamma(L_{\hat{E},\beta})  - z |} e^{-t\Re(z)}dz
	\lesssim \frac{1}{\gamma(L_{\hat{E},\beta})} + 2\int_{-1}^\infty \frac{1 + (1 + x^2)^2}{\sqrt{(\gamma(L_{\hat{E},\beta}) - x )^2 +1 }} e^{-tx}dx.
\end{align}
 We may thus choose $F(\gamma(L_{\hat{E},\beta}))$ as the last expression above multiplied by a suitably large constant. By dominated convergence,  $F(\gamma(L_{\hat{E},\beta})) \rightarrow 0$ as $\gamma(L_{\hat{E},\beta}) \rightarrow  \infty$.
\end{proof}

	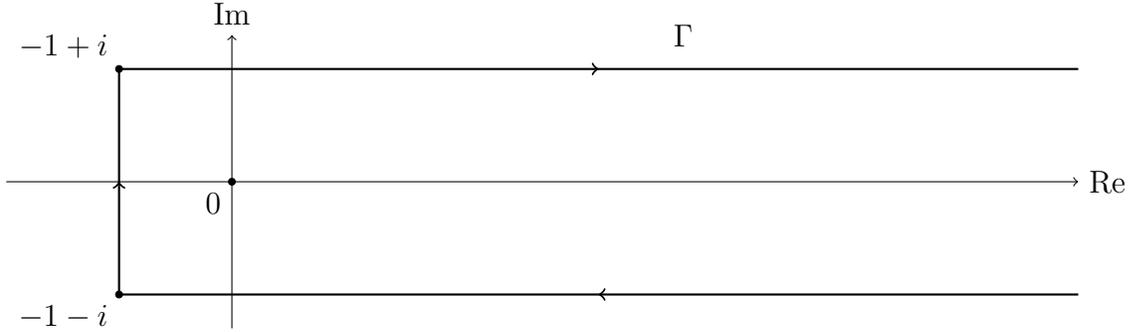
\begin{figure}[h!]
\begin{tikzpicture}[scale=1.5]
	\draw[->] (-2,0) -- (7.5,0) node[right] {Re};
	\draw[->] (0,-1.3) -- (0,1.3) node[above] {Im};
	
	\filldraw (0,0) circle (0.03) node[anchor=north east] {0};
	
	\coordinate (A) at (-1,-1);
	\coordinate (B) at (-1,1);
	\coordinate (C) at (7.5,);
	\coordinate (D) at (7.5,-1);

	\draw[thick, postaction={decorate},
	decoration={markings, mark=at position 0.5 with {\arrow{>}}}
	] (A) -- (B);
	
	\draw[thick, postaction={decorate},
	decoration={markings, mark=at position 0.5 with {\arrow{>}}}
	] (B) -- (C);

	\draw[thick, postaction={decorate},
	decoration={markings, mark=at position 0.5 with {\arrow{>}}}
	] (D) -- (A);
	
	\filldraw (A) circle (0.03) node[anchor=north east] {$-1 - i$};
	\filldraw (B) circle (0.03) node[anchor=south east] {$-1 + i$};

	\node at (4,1.3) {$\Gamma$};
\end{tikzpicture}
	\captionof{figure}{Integration path $\Gamma$}
\label{int_und}
\end{figure}

	\section{Convergence under increasing connectivity II: Directed Graphs}\label{directed_convergence}
	In the undirected setting, we saw in the proofs of Theorems \ref{undir_appr_thm_strongly} \&  \ref{undir_appr_thm}, that the sequence of Laplacians $L_{E \setminus \hat{E}} + L_{\hat{E},\beta}$ converged in a generalized resolvent sense towards the limit operator $PL_{E \setminus \hat{E}}P$. Here $P = \chi_{\{0\}}( L_{\hat{E},\beta})$ is the spectral projection onto the kernel of $ L_{\hat{E},\beta}$, which is independen of $\beta$. We then canonically decomposed this projecetion as $P  = J_\uparrow J_\downarrow$ and identified the limit operator with a Laplacian $\underline{L}$ on a coarse-grained graph as $\underline{L} =  J_\downarrow PL_{E \setminus \hat{E}}PJ_\uparrow  =  J_\downarrow L_{E \setminus \hat{E}}J_\uparrow  $.
	
	\medskip
	
		In the directed setting, the situation is similar in spirit, but technically more involved: Since (in- and out-degree) Laplacians are generically no longer even self adjoint, let alone positive semi-definite, we for example  lose access to monotone convergence theorems 
	 as generic tools to establish strong convergence under assumptions as weak as merely demanding $a_\beta(i,j) \rightarrow \infty$ for all $(i,j) \in \hat{E}$.

\subsection{Large coupling convergence and the Riesz projector}

However, the situation is not hopelessly lost:	In the simpler setting of linear scaling and under mild well-behavedness assumptions, we still have resolvent convergence  for operator families of the form $A + \beta B$,  as $\beta \rightarrow \infty$ \cite{koke2026coupling}.  
	In this setting (where   $A, B$ are no longer self-adjoint)  the orthogonal spectral projection 
 onto the kernel of the perturbing operator $B$  (i.e. onto the kernel of $L_{\hat{E},\beta}$ in the notation of the preceeding section) is no longer well-defined. Instead, it is replaced by the somewhat more general
	 \emph{Riesz projector} (c.f. e.g. \cite{HislopSigal1996, koke2026coupling}). This projection  is canonically associatied to the point  $0 \in \sigma(B)$ via
	\begin{align}\label{riesz_projector}
		P:= \frac{1}{2 \pi i} \oint_{S_\epsilon^1} (B - z I)^{-1} \, dz.
	\end{align}
 For the Riesz projector to be well defined,   $0 \in \sigma(B)$ needs to be an isolated point in the spectrum of $B$. Precisely then  there exists $\epsilon > 0$ for which the cicle $S_\epsilon^1$ of radius $\epsilon$ lies completely within the resolvent set of $B$ and exactly encircles zero  and no other points of the spectrum  $\sigma(B)$.
The projection (\ref{riesz_projector}) is generically not orthogonal. If $B$ however is self-adjoint  (and $0 \in \sigma(B)$ is isolated), it exactly coincides with the spectral projection (as e.g. calculated via the Borel functional calculus) onto onto $\ker(B)$ \cite{HislopSigal1996}.
\medskip

 In this non-self-adjoint-setting, for  $A,B $ bounded, the limit operator of the family $A + \beta B$ determined in \cite{koke2026coupling} is then given by $PAP$.
  To see how we may apply this convergence result
    to our graph  setting, we consider a directed graph, and linearly perturb edges in a finite set $\tilde{E} \ni (i,j)$:
 
 	\begin{Def}\label{directed_clusters} Given a graph $G$, a collection of directed clusters $\{\tilde E_i\}_{i \in I}$ is a finite family  of edge sets ($\tilde E_i \subseteq E$) with total set $\tilde{E} = \cup_{i \in I} \tilde{E}_i$ so that 
 	\begin{enumerate}[label=(\roman*), topsep=0pt, itemsep=-1em]
 		\item Each $\tilde E_i$ is finite.\\ 
 		\item Each $\tilde E_i$ determines a single reach of $(V,\tilde{E})$.
 	\end{enumerate}
 	Associated to each cluster $\tilde E_i$ is the corresponding set $\tilde{V}_i =  \{v| \exists a \in V: (v,a) \in \tilde E_i \lor (a,v) \in \tilde E_i \}$ of nodes with at least one edge  in the respective cluster $\tilde{E}_i$ incident at or emanating at them (i.e. the nodes in the reach determined by $\tilde E_i$).
 \end{Def}
 \begin{Rem}
 Note that given the total set $\tilde{E}$, we can always uniquely decompose it into its reaches $\tilde{E}_i$. It should also be noted that in contrast to the undirected setting, we here no longer assume that the sets $\tilde{E}_i$ are pairwise \emph{node}-disjoint (i.e. in general $V_i \cap V_j \neq \emptyset$). This difference arises because while in the undirected setting distinct connected components are necessarily disjoint, \emph{reaches} in the directed setting may share nodes among them.
  \end{Rem}

 \medskip
 
 On the set $\tilde{E}$, we then perturb edge weights as  $a_\beta(i,j) = \beta a(i,j)$, with $\beta \gg 1$.
 Hence we may write 
 $
 	\Lio_\beta = \Lio_{E \setminus \tilde{E}} + \beta \Lio_{\tilde{E}}
$.
 Thus \cite[Theorem 3.12]{koke2026coupling}
  implies\footnote{To apply this Theorem, we also need $P^\pm \Lio_{\tilde{E}}P^\pm = 0$. This holds,  as discussed in Section \ref{chap:kernels} below.} that we have the norm resolvent convergence 
 \begin{align}\label{initial_dir_conv}
 	\|(\Lio_\beta - z)^{-1} - P^\pm (P^\pm \Lio_{E \setminus \tilde{E}}P^\pm - z)^{-1}  P^\pm \| \lesssim 1/\beta \xrightarrow{\beta \rightarrow \infty} 0.
 \end{align}
Here $P^\pm$ is the Riesz projection onto the kernel of $\Lio_{\tilde{E}}$. This projection is well defined via (\ref{riesz_projector}), since we assumed that $\tilde{E}$ is a finite set, which
ensures that  $L_{\tilde{E}}$ is  finite dimensional, so
that $0\in \sigma(L^\pm_{\tilde{E}})$ is an isolated point.

\medskip 

 In order to cast (\ref{initial_dir_conv}) into graph theoretic terms and generalize it further, we have to better understand the Riesz projector $P^\pm$ and its relationship to the (left- and right-) kernels of $\Lio$. With $\tilde{V} = \cup_i V_i$, we note the following:

	\begin{Thm}\label{riesz_demystified}
			The Riesz projections $P^\pm$ onto the kernels of $\Lio_{\hat E}$ are given as
		\begin{align}
			P^- &= \chi_{V \setminus \tilde{V}} +  \sum\limits_{R \in \mathfrak{R}}       \vec \gamma^{-}_{R}         \langle \cev \gamma^{-}_{R}, \cdot \rangle_{\ell^2(G)}, \quad
						P^+ =  \chi_{V \setminus \tilde{V}} + \sum\limits_{R^{\intercal} \in \mathfrak{R}^{\intercal}}       \vec\gamma^{+}_{R^{\intercal}}    
			\langle \cev\gamma^{+}_{R^{\intercal}}, \cdot \rangle_{\ell^2(G)}.
		\end{align}
		Here we have used the notation of Section \ref{directed_kernels} and e.g. denoted by $\mathfrak{R}^\intercal$ the set of reaches in $G^\intercal$. 
		\end{Thm}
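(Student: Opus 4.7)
The plan is to prove the formula for $P^-$; the formula for $P^+$ then follows by running the identical argument with Propositions \ref{right_kernel_+} and \ref{prop:leftkernel_+} in place of Propositions \ref{right_kernel_Lin} and \ref{prop:leftkernel_-}, or alternatively by invoking Proposition \ref{in_out_transpose_prop} and taking adjoints. My strategy is to identify $P^-$ via its standard characterization for a semisimple isolated eigenvalue: once we know that $0$ is semisimple in $\sigma(\Lin_{\tilde E})$, the associated Riesz projection is the unique idempotent $P$ satisfying $\Range(P) = \ker(\Lin_{\tilde E})$ together with $\Lin_{\tilde E} P = P \Lin_{\tilde E} = 0$.

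First I observe that, since $\tilde E \subseteq \tilde V \times \tilde V$ and the edge weight vanishes outside $\tilde E$, the operator $\Lin_{\tilde E}$ annihilates $\ell^2(V \setminus \tilde V)$ and leaves $\ell^2(\tilde V)$ invariant. With respect to the orthogonal splitting $\ell^2(G) = \ell^2(V \setminus \tilde V) \oplus \ell^2(\tilde V)$ it is therefore block-diagonal, with the top block equal to zero and the bottom block finite-dimensional. The Riesz projection at $0$ decomposes accordingly: the top block contributes the identity on $\ell^2(V \setminus \tilde V)$ -- which, extended to $\ell^2(G)$, is exactly multiplication by $\chi_{V \setminus \tilde V}$, matching the leading term of the claimed formula. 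The bottom block contributes the Riesz projection of the finite-dimensional restriction $\Lin_{\tilde E}\!\restriction_{\ell^2(\tilde V)}$ at its eigenvalue $0$; by \cite[Theorem 4.6]{Veerman2020APO} the algebraic and geometric multiplicities of $0$ for this restriction coincide (both equal $|\mathfrak{R}|$), so $0$ is semisimple and the characterization above applies.

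Next I verify that
$$\tilde P := \sum_{R \in \mathfrak{R}} \vec{\gamma}^-_R \,\langle \cev{\gamma}^-_R, \cdot \rangle_{\ell^2(G)}$$
satisfies the three defining properties. Idempotence $\tilde P^2 = \tilde P$ follows at once from the biorthogonality relation $\langle \cev{\gamma}^-_R, \vec{\gamma}^-_S \rangle_{\ell^2(G)} = \delta_{RS}$ of Proposition \ref{prop:leftkernel_-}, since then $\tilde P \vec{\gamma}^-_S = \vec{\gamma}^-_S$ for each $S \in \mathfrak{R}$. The range of $\tilde P$ is contained in $\spane\{\vec{\gamma}^-_R\}_{R \in \mathfrak{R}}$ by construction; Proposition \ref{right_kernel_Lin} identifies this span with $\ker(\Lin_{\tilde E}\!\restriction_{\ell^2(\tilde V)})$, while the reverse inclusion is immediate from idempotence. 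Finally $\Lin_{\tilde E} \tilde P = 0$ since each $\vec{\gamma}^-_R$ lies in the right kernel, and $\tilde P \Lin_{\tilde E} = 0$ since each $\cev{\gamma}^-_R$ lies in the left kernel of $\Lin_{\tilde E}$ (Proposition \ref{prop:leftkernel_-}). Uniqueness of the Riesz projection then identifies $\tilde P$ with the bottom-block summand of $P^-$, and combining with the previous paragraph yields the claimed formula. The only non-bookkeeping input in the whole argument is the semisimplicity of $0 \in \sigma(\Lin_{\tilde E})$ -- the coincidence of algebraic and geometric multiplicities is not automatic for non-self-adjoint operators and represents the main obstacle, which is overcome here by the cited Veerman theorem; given semisimplicity, the remainder is a direct verification against the biorthogonal bases developed in Section \ref{chap:kernels}.
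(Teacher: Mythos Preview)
Your proof is correct and follows essentially the same route as the paper's: both arguments hinge on the semisimplicity of $0 \in \sigma(\Lin_{\tilde E})$ (the paper packages this as Lemma~\ref{vanishing_nilpotent}, which cites the same Veerman result you invoke directly), then verify idempotence of the candidate formula via the biorthogonality $\langle \cev\gamma^{-}_{R}, \vec\gamma^{-}_{S}\rangle = \delta_{RS}$ and conclude by a uniqueness/dimension argument. Your explicit block decomposition isolating the $\chi_{V\setminus\tilde V}$ summand is a small expository improvement over the paper's terser treatment, but the mathematical content is the same.
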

		Note that multiplication with the  function $ \chi_{V \setminus \tilde{V}}$ simply implements the identity operation on any node outside $\tilde{V} = \cup_i V_i$: For any $f$ completely supported on nodes outside  $\tilde{V} $, we have $P^\pm f = \chi_{V \setminus \tilde{V}} f = f$. Before providing the proof of the above theorem, we first establish
		that the nilpotent part of $\Lio_{\hat{E}}$ at $0 \in \sigma(\Lio_{\hat{E}})$ vanishes:
		\begin{Lem}\label{vanishing_nilpotent} Let $P^\pm$ be the Riesz projector onto $0 \in \sigma( \Lio_{\tilde{E}})$.
			We have $P^\pm \Lio_{\tilde{E}} =  \Lio_{\tilde{E}}P^\pm = P^\pm \Lio_{\tilde{E}}P^\pm = 0$.
		\end{Lem}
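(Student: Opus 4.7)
The plan is to reduce all three claimed identities to a single semisimplicity statement about the eigenvalue $0 \in \sigma(\Lio_{\tilde E})$. First, since $P^\pm$ is defined by a contour integral of the resolvent (equation (\ref{riesz_projector})), it is a holomorphic function of $\Lio_{\tilde E}$ and therefore commutes with it. This immediately gives $\Lio_{\tilde E} P^\pm = P^\pm \Lio_{\tilde E}$, and combined with $(P^\pm)^2 = P^\pm$ also $P^\pm \Lio_{\tilde E} P^\pm = P^\pm \Lio_{\tilde E}$, so all three expressions coincide and only $\Lio_{\tilde E} P^\pm = 0$ needs to be shown.

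Second, by the standard theory of Riesz projectors (cf.\ \cite{HislopSigal1996}), $\mathrm{Range}(P^\pm)$ equals the generalized eigenspace of $\Lio_{\tilde E}$ at $0$, and since $\sigma\bigl(\Lio_{\tilde E}{\restriction}\mathrm{Range}(P^\pm)\bigr) = \{0\}$, the restriction is nilpotent. The required identity $\Lio_{\tilde E} P^\pm = 0$ is therefore equivalent to the vanishing of this nilpotent restriction, i.e.\ to the equality $\mathrm{Range}(P^\pm) = \ker(\Lio_{\tilde E})$, which in turn is the statement that $0$ is a \emph{semisimple} eigenvalue of $\Lio_{\tilde E}$ (algebraic and geometric multiplicities coincide).

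Third, I would invoke \cite[Theorem 4.6]{Veerman2020APO}, which asserts precisely this semisimplicity for the unweighted in-degree Laplacian $L = D^- - A$ on a finite digraph (both multiplicities equaling the number of reaches $|\mathfrak{R}|$). Since $\tilde E$ is finite by Definition \ref{directed_clusters}, $\Lin_{\tilde E}$ decomposes as a direct sum of a finite-dimensional operator on $\ell^2(\tilde V)$ and the zero operator on $\ell^2(V\setminus\tilde V)$; the latter is trivially semisimple at $0$. For arbitrary node weights, $\Lin_{\tilde E} = M^{-1}(D^- - A)$ has non-positive off-diagonal entries and zero row-sums, hence is a (generalized) $M$-matrix with spectrum in $\{\Re z \geq 0\}$, for which semisimplicity of $0$ follows from classical Perron--Frobenius theory. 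The out-degree case is then handled by Proposition \ref{in_out_transpose_prop}: writing $\Lout_G = (\Lin_{G^\intercal})^\ast$ and observing that taking Hilbert-space adjoints preserves the Jordan structure at each eigenvalue, the conclusion carries over verbatim from $\Lin_{G^\intercal}$ to $\Lout_G$.

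The step I expect to be most delicate is the extension of Veerman's multiplicity count from the unweighted ($m \equiv 1$) to the weighted setting: the $M$-matrix argument sketched above dispatches it in principle, but a fully self-contained treatment could alternatively re-derive the algebraic multiplicity directly from the explicit basis constructions of Propositions \ref{right_kernel_Lin} and \ref{prop:leftkernel_-}, exploiting the biorthogonality $\langle \cev\gamma^{-}_R, \vec\gamma^{-}_S\rangle_{\ell^2(G)} = \delta_{RS}$ to manufacture a projection whose range manifestly has dimension $|\mathfrak{R}|$ and then identifying it with $P^\pm$ via the defining contour integral (\ref{riesz_projector}).
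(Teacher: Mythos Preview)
Your proposal is correct and takes essentially the same route as the paper: both reduce the three identities to the semisimplicity of $0\in\sigma(\Lio_{\tilde E})$ (equality of algebraic and geometric multiplicity) and then cite an external result. The paper's proof is terser---it invokes \cite[Theorem~3.2]{caughveer} rather than \cite[Theorem~4.6]{Veerman2020APO}, does not spell out the commutativity reduction you give, and does not separate the out-degree case via Proposition~\ref{in_out_transpose_prop}---but the logical skeleton is identical.

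Your caveat about extending from $m\equiv 1$ to general node weights is more scrupulous than the paper, which applies the cited theorem to the weighted $\Lin$ without comment (as it also does in the proof of Proposition~\ref{prop:leftkernel_-}). The semisimplicity statements in \cite{caughveer,Veerman2020APO} are in fact formulated for arbitrary matrices with non-positive off-diagonal entries and vanishing row sums; since $M^{-1}(D^{-}-A)$ retains both properties for any positive diagonal $M$, the extension is immediate and no separate $M$-matrix or Perron--Frobenius argument is needed. Your alternative of reconstructing $P^\pm$ from the biorthogonal bases of Propositions~\ref{right_kernel_Lin} and~\ref{prop:leftkernel_-} is exactly what the paper does next, in the proof of Theorem~\ref{riesz_demystified}.
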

		\begin{proof}
			This statement is equivalent to saying that the algebraic- and geometric multiplicities of $0 \in \sigma(\Lio_{\tilde{E}}{\restriction_{\tilde{V}})}$  coincide, which is true by  \cite[Theorem 3.2]{caughveer}).
			Hence
			the nilpotent part  in the Jordan Chevalley decomposition of 
			$\Lio_{\tilde{E}}{\restriction_{\tilde{V}}}$ at zero vanishes
			\cite{CoutyEsterleZarouf2011, kato1995perturbation}.
		\end{proof}

	\begin{proof}[Proof of Theorem \ref{riesz_demystified}]
		By Lemma \ref{vanishing_nilpotent} the eigenvalue $0 \in \sigma( \Lio_{\tilde{E}})$ is semi-simple, so that the Riesz projector is exatctly a projection onto the left- and right kernels of $L^\pm_{\tilde{E}}$ \cite{kato1995perturbation}. We establish that this projection is exactly given by $P^\pm$:
		Using $\langle \cev\gamma^{-}_{R}, \vec\gamma^{-}_{S}\rangle_{\ell^2(G)} = \delta_{RS}$ and  $	\langle \cev\gamma^{+}_{R^\intercal}, \vec\gamma^{+}_{S^\intercal}\rangle_{\ell^2(G)} = \delta_{R^\intercal S^\intercal}$ 
		as guaranteed by Propositions  \ref{prop:leftkernel_-} and   \ref{prop:leftkernel_+} respecively, it is easilly verified that  $P^\pm \cdot P^\pm = P^\pm$,  so that the $P^\pm$ are indeed projections. Similarly one verifies that $P^\pm$ preseves the corresponding left- and right-kernels of $\Lio_{\tilde{E}}$. 		 Clearly the dimension of the range of $P^\pm$ agrees with the dimension of the (left- and right-) kernel of $\Lio_{\hat{E}}$, so that the claim is proved.
		\end{proof}

	\subsection{Norm-resolvent convergence to Laplacians on reduced graphs}
	
Given a collection of clusters as in Definition \ref{directed_clusters}, we are now interested in establishing norm resolvent convergence  of the familiy $
	\Lio_\beta = \Lio_{E \setminus \tilde{E}} + \beta \Lio_{\tilde{E}}$ towards reduced  Laplacians $\underline{\Lio}$ on reduced graphs $\underline{G^\pm}$ as $\beta \rightarrow \infty$. We separate the discussion into results concerning the in-degree  ($-$) setting and the out-degree ($+$) setting.

\subsubsection{In-degree Laplacian}
For families of the form $	\Lin_\beta = \Lin_{E \setminus \tilde{E}} + \beta \Lin_{\tilde{E}}$, we will establish convergene towards a limit Laplacian defined on a reduced graph $\underline{G}^-$ as $\beta \rightarrow \infty$. We begin by defining this reduced limit-graph in the in-degree setting:

	\begin{Def}\label{indeg_coarse_G_def} Given a graph $G = (V, m, E, a)$ and a countable collection $\{\tilde{E}_i\}_{i \in I}$ of clustered edges,
		we define a reduced graph $\underline{G}^- = \{\underline{V}^-, \underline{m}^-, \underline{E}^-,\underline{a}^-\}$ as follows:
		Define an equivalence relation "$\sim_{\tilde{E}}$" on the node-set $V$ by setting $a \sim_{\hat{E}} b $ iff
		$a$ and $b$ are contained in the same reach of $(V, \tilde{E})$.
		\begin{enumerate}[label=(\roman*), topsep=0pt, itemsep=-.2em]
			\item 	The reduced node set $\underline{V}^-$ is defined as the set of reaches of $(V, \tilde{E})$:
			\begin{align}
				\underline{V}^- = V/ \sim_{\tilde{E}}.
			\end{align}
			\item The weight of a node $R \in \underline{V}$ is given as
			\begin{align}\label{in_deg_mass}
				\underline{m}^-(R) := \langle  \chi_R ,P^-  \chi_R  \rangle_{\ell^2(G)}
			\end{align}
			\item The edge-weight function $\underline{a}:\underline{V} \times \underline{V} \rightarrow [0, \infty)$ is defined similarly as:
			\begin{align}\label{weight_def_-}
				\underline{a^-}(R, S) := \underline{m}^-(R)\cdot\sum\limits_{r \in R, s \in S} 
				[
				\cev{\gamma}^{-}_{R}(r)
				a(r,s)
				\vec{\gamma}_S^{-}(s)
				]
			\end{align}
			\item The set of edges $\underline{E}$ is given as the support of $\underline{a}$ on $\underline{V} \times \underline{V}$:
			\begin{equation}
				\underline{E}^- = \{(R,S)\in\underline{V}\times\underline{V}: \underline{a}^-(R,S) >0 \}.
			\end{equation}
		\end{enumerate}
	\end{Def}
	
	\medskip
	
A few comments are in order: To understand the mass definition (\ref{in_deg_mass}) better, we note
 that we have
$
\underline{m}^-(R) := \langle  \chi_R ,P^-  \chi_R  \rangle_{\ell^2(G)}
			=	\langle  \chi_R , \vec\gamma^{-}_{R}   \rangle_{\ell^2(G)}  
			\cdot
				\langle \cev\gamma^{-}_{R}, \chi_R \rangle_{\ell^2(G)}
			=	\langle  \chi_R , \vec\gamma^{-}_{R}   \rangle_{\ell^2(G)}  
$.
In the undirected case, we have $\vec\gamma^{-}_{R} = \chi_R$, so that $	\langle  \chi_R , \vec\gamma^{-}_{R}   \rangle_{\ell^2(G)}  = \|\chi_R\|^2_{\ell^2(G)} =  \sum_{v \in R} m(v)$. Hence  (\ref{in_deg_mass}) coincides with our earlier definition  (\ref{mass_def_undirected}) of aggregated mass in the undirected setting (c.f. also Rem \ref{naturality_remark_undirected}). 
 In the generic directed setting, we have,  with the exclusive ($H(R)$) and common ($C(R)$) parts of $R$, that  
	\begin{align}
		\underline{m}(R) = \sum\limits_{r \in R} m(r)  \vec\gamma^{-}_{R}(r) = \sum\limits_{r \in H(R)} m(r) + \sum\limits_{r \in C(R)} m(r)   \vec\gamma^{-}_{R}(r).
	\end{align}
Thus (\ref{in_deg_mass}) extends our earlier undirected definition (\ref{mass_def_undirected}) by accounting for the fact that nodes may now belong to more than one reach. Indeed, (\ref{in_deg_mass}) avoids an 'overcounting' of node masses when aggregating, and ensures that mass is conserved when going from $G$ to $\underline{G}^-$: If the mass $M = \sum_{v \in V} m(v)$ of our original graph $G$ is finite, we have with $\underline{M}^- = \sum_{R \in \underline{V}} \underline{m}^-(R)$ the total mass of $\underline{G}$, that
	\begin{equation}
		\underline{M}^- = \sum\limits_{R \in \underline{V}} \underline{m}^-(R) 
		=
		 \sum\limits_{R \in \underline{V}}
		 \left(
		 \sum\limits_{r \in H(R)} m(r) + \sum\limits_{r \in C(R)} m(r)   \vec\gamma^{-}_{R}(r)
		 \right) 
 		= \sum\limits_{r \in V} m(r)
		= M.
	\end{equation}
	The third equality follows from the fact that the $\{\vec\gamma^{-}_{R}\}_{R \in \mathfrak{R}}$ form a partition of unity (i.e. $	\sum_{R \in \mathfrak{R}}\,	\vec\gamma^{-}_{R}(j) = 1$ $\forall i \in V$),
	as ensured by Proposition \ref{right_kernel_Lin}.	
	
		\medskip

For the definition (\ref{weight_def_-}) of aggregated edge-weigths, we also note that in the undirected setting it exactly reduces to our earlier definition (\ref{edge_def_undir}): In the undirected setting, we have $	\vec{\gamma}^{-}_{S} = \chi_S$, $	\cev{\gamma}_R^{-}= \chi_R/(\sum_{v \in R} m(r))$, and $\underline{m}^{-}(R) = \sum_{v \in R} m(r)$. Thus
		\begin{align}
	\underline{a^-}(R, S) 
	= \frac{ \underline{m}^-(R)}{ \underline{m}^-(R)}	\sum\limits_{r \in R, s \in S} 
\chi_{R}(r)
	a(r,s)
\chi_S(s) 
= 
	\sum\limits_{r \in R, s \in S} 
a(r,s).
\end{align}

\medskip

	We next decompose the Riesz projection of Theorem \ref{riesz_demystified}   as $
	 	P^- = J^-_{\uparrow}J^-_\downarrow$,
	 with $J^-_\downarrow$ mapping to the Hilbert space $\ell^2(\underline{G}^-)$ and $J^-_{\uparrow}$ mapping back up to $\ell^2(G)$:
	 \begin{Def}\label{proj_interp_indeg}
	 	At the level of Hilbert spaces, we define the bounded projection $J^-_\downarrow: \ell^2(G) \rightarrow \ell^2(\underline{G}^-)$ and interpolation $J^-_\uparrow: \ell^2(\underline{G}^-) \rightarrow \ell^2(G)$ operators as 
	 	\begin{align}
	 		[J^-_\downarrow f](R) = \langle \cev\gamma^{-}_{R} , f \rangle_{\ell^2(G)},\quad \text{and} \quad J^-_\uparrow \underline{f} = \sum_{R \in \mathfrak{R}} \underline{f}(R)  \vec\gamma^{-}_{R}.
	 	\end{align}
	 In comparison to Definition \ref{proj_interp_undirected}  we have here absorbed the mass $\underline{m}^-(R)$ into  $\cev\gamma^{-}_{R}$, so that it no longer appears explicitly, as in it did in the undirected setting (\ref{undirected_Js}). 
	 \end{Def}
	 
	 As in the undirected setting, there in principle remains one scalar degree of freedom per node in Definition \ref{proj_interp_indeg}, as $P^-$ is preserved under the simultaneous changes 
	 $[J^-_\downarrow f](R) \mapsto b_R \langle \cev\gamma^{-}_{R} , f \rangle_{\ell^2(G)}$, $J^-_\uparrow \underline{f} \mapsto \sum_{R \in \mathfrak{R}} \underline{f}(R)  \vec\gamma^{-}_{R}/b_R$ for all $b_R\neq 0$ Setting $b_R \equiv 1$ as in Definition \ref{proj_interp_indeg} above however is the natural choice:
	 \begin{Rem}\label{norm_preservation_rem_-}
	Here in the directed setting, $J^-_\uparrow$ as in Definition \ref{proj_interp_indeg} generically no longer is an isometry. However, it still maps probability distributions to probability distributions: Indeed, let $f \in \ell^1(G) \cap \ell^2(G)$ be such that $f: V \rightarrow   [0,\infty )$ and 
	\begin{align}
	\sum_{v \in V} f(v) m(v)  =	\sum_{v \in V} \langle f, e_v \rangle_{\ell^2(G)} =  1.
	\end{align}
Define a new function $\check{f}: \underline{G}^- \rightarrow [0, \infty)$ via
$\langle \check{f},\cdot \rangle_{\ell^2(\underline{G}^-)} =  \langle f, J_\uparrow^- \cdot \rangle_{\ell^2(G)}$.\footnote{While this definition might seem counter-intuitive at first, it is related to the fact that $e^{-t \Lin}$ preserves probabilty distributions "$p$" only if it acts as $\langle p, \cdot \rangle \mapsto \langle p,e^{-t \Lin} \cdot \rangle $ \cite[Section 6]{Veerman2020APO}.} Then
\begin{align}
	\sum\limits_{R \in \underline{V}^-} \check{f}(R) \underline{m}^-(R) = 	\sum\limits_{R \in \underline{V}^-}  \langle \check{f}, e_R \rangle_{\ell^2(\underline{G}^-)}
=
\sum\limits_{R \in \underline{V}^-}  \langle f, J_\uparrow e_R \rangle_{\ell^2(G)}
= \sum\limits_{R \in \underline{V}^-}  \langle f, \vec\gamma^{-}_{R} \rangle_{\ell^2(G)} 
\end{align}
By Proposition  \ref{right_kernel_Lin},  $\sum_{R \in \mathfrak{R}}\,	\vec\gamma^{-}_{R}(j) = 1$, so that $\sum_{R \in \underline{V}^-}  \langle f, \vec\gamma^{-}_{R} \rangle_{\ell^2(G)}  = \sum_{v \in V} f(v) m(v)$.

\medskip A similar statement is true for $J^-_\downarrow$: Let $\underline{f} \in \ell^2(\underline{G}^-)\cap \ell^1(\underline{G}^-) $ be a probability distribution (i.e. $\underline{f}: \underline{G} \rightarrow [0, \infty)$ and $\sum_{R \in \underline{V}^-} \underline{f}(R) \underline{m}^-(R)  = 1$). Then define a function $\hat{\underline{f}}$ on $G$ via 
$\langle \hat{\underline{f}} , \cdot \rangle_{\ell^2(G)}  
=   \langle \underline{f}, J^-_\downarrow   \cdot \rangle_{\ell^2(\underline{G}^-)} $. Then $\hat{\underline{f}}$ is again a probability distribution:
\begin{align}
&\sum_{v \in V}	\hat{\underline{f}}(v) m(v) = \sum_{v \in V}	\langle \hat{\underline{f}}(v), e_v \rangle_{\ell^2(G)} 
= 
\sum_{v \in V} \langle \underline{f}, J^-_\downarrow   e_v \rangle_{\ell^2(\underline{G}^-)}\\
 =
&\sum_{R \in \underline{V}^-} 
 \langle \underline{f}, e_R \rangle_{\ell^2(\underline{G}^-)}
 \cdot 
 \sum_{v \in V} 
\langle e_R, J^-_\downarrow   e_v \rangle_{\ell^2(\underline{G}^-)}/\underline{m}^-(R) 
= \sum_{R \in \underline{V}^-} 
\langle \underline{f}, e_R \rangle_{\ell^2(\underline{G}^-)} = 1
\end{align}
Here we have noted $ \sum_{v \in V} 
\langle e_R, J^-_\downarrow   e_v \rangle_{\ell^2(\underline{G}^-)}/\underline{m}^-(R) = 1$, since by Prop. \ref{prop:leftkernel_-} we have $\sum_{j}\, \langle \cev \gamma^{-}_{R}, e_j \rangle =1 $ and by Definition \ref{proj_interp_indeg}, we have $J^-_\downarrow  = \sum_{R \in \underline{V}} e_R \langle \cev\gamma^{-}_{R} , \cdot \rangle_{\ell^2(V)}$.
\end{Rem}

	 \medskip 
	To the graph $\underline{G}^-$ of Definition \ref{indeg_coarse_G_def} we can then canonically associate a graph Lapalcian $\underline{L}^-$ via  Definition \ref{laplacian_definition}. As we show now, this is the limit Laplacian, to which the family $\Lin_\beta = \Lin_{E \setminus \tilde{E}} + \beta \Lin_{\tilde{E}}$ converges:
\begin{Thm}\label{-_conv_theorem}
	We have
$
		\| (\Lin_\beta - z)^{-1} - J^-_{\uparrow} (\underline{L}^- -z)^{-1}  J^-_\downarrow \|
		\lesssim 1/\beta \xrightarrow{\beta \rightarrow \infty} 0.
$
\end{Thm}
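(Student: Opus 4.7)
The starting point is the large-coupling convergence (\ref{initial_dir_conv}), which is Theorem~3.12 of \cite{koke2026coupling} applied to $A = \Lin_{E\setminus\tilde{E}}$ and $B = \Lin_{\tilde{E}}$. The hypothesis $P^- \Lin_{\tilde{E}} P^- = 0$ required by that theorem is furnished by Lemma~\ref{vanishing_nilpotent}, and finiteness of $\tilde{E}$ ensures $0 \in \sigma(\Lin_{\tilde{E}})$ is isolated so the Riesz projector $P^-$ is well-defined. Consequently, it suffices to identify the limit operator: we must show
\begin{equation}\label{plan_target}
P^- \bigl(P^- \Lin_{E\setminus\tilde{E}} P^- - z\bigr)^{-1} P^- \;=\; J^-_\uparrow (\underline{L}^- - z)^{-1} J^-_\downarrow.
\end{equation}

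The plan is to split this identification into a purely algebraic step and a purely combinatorial step. For the algebraic step, Theorem~\ref{riesz_demystified} together with Definition~\ref{proj_interp_indeg} gives the factorization $J^-_\uparrow J^-_\downarrow = P^-$, while the biorthogonality relation $\langle \cev\gamma^{-}_R, \vec\gamma^{-}_S\rangle_{\ell^2(G)} = \delta_{RS}$ from Proposition~\ref{prop:leftkernel_-} immediately yields $J^-_\downarrow J^-_\uparrow = I_{\ell^2(\underline{G}^-)}$. Writing $M := \Lin_{E\setminus\tilde{E}}$ and provisionally $\tilde{L} := J^-_\downarrow M J^-_\uparrow$, these two relations give the intertwining $(P^- M P^- - z)\, J^-_\uparrow = J^-_\uparrow (\tilde{L} - z)$, since $P^- J^-_\uparrow = J^-_\uparrow J^-_\downarrow J^-_\uparrow = J^-_\uparrow$. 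As $P^- M P^- - z$ is invertible on $\mathrm{Ran}(P^-)$ and $J^-_\uparrow$ maps into $\mathrm{Ran}(P^-)$ (being an isomorphism onto it, with inverse $J^-_\downarrow\!\!\restriction_{\mathrm{Ran}(P^-)}$), inverting and multiplying on the right by $J^-_\downarrow$ gives
$$J^-_\uparrow (\tilde{L} - z)^{-1} J^-_\downarrow = P^- (P^- M P^- - z)^{-1} J^-_\uparrow J^-_\downarrow = P^- (P^- M P^- - z)^{-1} P^-,$$
so (\ref{plan_target}) reduces to the claim $\tilde{L} = \underline{L}^-$.

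The combinatorial step is the verification of this last equality and constitutes the main technical content. I would compute matrix elements: with respect to the canonical basis $\{e_R\}_{R \in \underline{V}^-}$ of $\ell^2(\underline{G}^-)$, unpacking Definition~\ref{proj_interp_indeg} and the $\ell^2(\underline{G}^-)$-inner product gives
$$\underline{m}^-(R)\,\langle e_R, \tilde{L}\, e_S \rangle_{\ell^2(\underline{G}^-)} \;=\; \langle \cev\gamma^{-}_R, \, M \vec\gamma^{-}_S \rangle_{\ell^2(G)}.$$
Using that $\vec\gamma^{-}_S$ lies in the right-kernel and $\cev\gamma^{-}_R$ in the left-kernel of $\Lin_{\tilde{E}}$, the right-hand side equals $\langle \cev\gamma^{-}_R, \Lin \vec\gamma^{-}_S \rangle_{\ell^2(G)}$, where $\Lin = \Lin_{E\setminus\tilde{E}} + \Lin_{\tilde{E}}$ is the full (unperturbed) in-degree Laplacian. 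Expanding the action of $\Lin$ via Definition~\ref{laplacian_definition} and comparing to Definition~\ref{indeg_coarse_G_def} then shows: (i) for $R \neq S$, the off-diagonal matrix element recovers exactly $-\underline{a}^-(R,S)/\underline{m}^-(R)$ as defined in (\ref{weight_def_-}); (ii) for $R = S$, the diagonal entry is $\underline{d}^-(R)/\underline{m}^-(R) = \sum_{S \neq R} \underline{a}^-(R,S)/\underline{m}^-(R)$, which follows from the conservation identity $\sum_S \vec\gamma^{-}_S = \1$ of Proposition~\ref{right_kernel_Lin}.

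The hard part is precisely step~(i)--(ii): carrying out the combinatorial bookkeeping between the values of $\vec\gamma^{-}_R$ on exclusive versus common parts, the weight-vector structure of $\cev\gamma^{-}_R$ supported on the cabal $B(R)$, and the edge weights of $E \setminus \tilde{E}$ connecting distinct reaches. The key role played by the mass normalization $\underline{m}^-(R) = \langle \chi_R, P^- \chi_R \rangle_{\ell^2(G)}$ in (\ref{in_deg_mass}) is to absorb the asymmetry between the $m$-weighted inner product on $\ell^2(G)$ and the $\underline{m}^-$-weighted inner product on $\ell^2(\underline{G}^-)$, thereby cancelling denominators consistently. Once $\tilde{L} = \underline{L}^-$ is established, combining the algebraic identity above with (\ref{initial_dir_conv}) yields the claimed bound $\lesssim 1/\beta$.
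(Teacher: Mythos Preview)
Your proof plan is correct, and the identification step---reducing (\ref{plan_target}) to the equality $J^-_\downarrow \Lin_{E\setminus\tilde{E}} J^-_\uparrow = \underline{L}^-$ and then verifying the latter via matrix elements, the partition-of-unity identity $\sum_S \vec\gamma^-_S = \1$ from Proposition~\ref{right_kernel_Lin}, and the support properties of $\cev\gamma^-_R$---is exactly what the paper does. One minor slip: the mass factor in your displayed matrix-element formula sits on the wrong side; unpacking the definitions gives $\langle e_R, \tilde{L}\, e_S\rangle_{\ell^2(\underline{G}^-)} = \underline{m}^-(R)\,\langle \cev\gamma^-_R, M\vec\gamma^-_S\rangle_{\ell^2(G)}$, not the reverse. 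This does not affect the rest of your argument.

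Where you diverge from the paper is in the convergence step. You invoke (\ref{initial_dir_conv}) as a black box from \cite{koke2026coupling}; the paper instead re-derives the resolvent estimate in a self-contained way. It first proves an auxiliary lemma showing $\|(\beta\Lin_{\tilde{E}} - z)^{-1} + P^-/z\| \lesssim 1/\beta$ via block inversion (using $P^-\Lin_{\tilde{E}}P^- = 0$ from Lemma~\ref{vanishing_nilpotent}), and then runs a Neumann-series argument modelled on the proof of the undirected Theorem~\ref{undir_appr_thm} to obtain the full bound. Your shortcut is perfectly legitimate and cleaner, given that (\ref{initial_dir_conv}) has already been stated; the paper's longer route buys two things: the argument is self-contained, and it immediately yields the subsequent corollary that linear scaling $\beta\Lin_{\tilde{E}}$ may be replaced by any family $\Lin_{\tilde{E},\beta}$ satisfying $\|(\Lin_{\tilde{E},\beta} - z)^{-1} + P^-/z\| \to 0$.
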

In order to prove this result we first establish an auxiliary Lemma:
\begin{Lem}\label{aux_lemma_-}
Let $\Lin_{\hat{E}}$	be an in-degree Laplacian with finite range. Denote by $P^- = J^-_\uparrow J^-_\downarrow$ the Riesz projection onto $0 \in \sigma(L_{\hat{E}})$. We have
$
	\|  (\beta \Lin_{\tilde{E}} - z)^{-1} - P^-/(-z)  \| \lesssim 1/\beta 
$
\end{Lem}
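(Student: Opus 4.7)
The plan is to exploit the Riesz projection structure provided by Theorem \ref{riesz_demystified} together with the vanishing of the nilpotent part (Lemma \ref{vanishing_nilpotent}). Since $\tilde{E}$ is finite, $\Lin_{\tilde{E}}$ has finite-dimensional range and acts as zero on $\ell^2(V\setminus \tilde{V})$; in particular its spectrum consists of $\{0\}$ together with at most finitely many nonzero eigenvalues, separated from zero by some gap $\delta > 0$. The goal is to reduce the estimate to a Neumann series bound on the complement of the kernel.

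First, setting $Q^- := I_{\ell^2(G)} - P^-$, I would use $\Lin_{\tilde{E}} P^- = P^- \Lin_{\tilde{E}} = 0$ (Lemma \ref{vanishing_nilpotent}) to decompose the pencil with respect to the direct sum $\ell^2(G) = P^-\ell^2(G) \oplus Q^-\ell^2(G)$, which is in general only a direct (not orthogonal) sum. Writing
\begin{align}
\beta \Lin_{\tilde{E}} - z = -z P^- + (\beta \Lin_{\tilde{E}} - z)Q^-,
\end{align}
where the first summand annihilates $Q^-\ell^2(G)$ and the second annihilates $P^-\ell^2(G)$, block-wise inversion yields
\begin{align}
(\beta \Lin_{\tilde{E}} - z)^{-1} - P^-/(-z) = Q^-(\beta \Lin_{\tilde{E}}\restriction_{Q^-\ell^2(G)} - z)^{-1} Q^-.
\end{align}
This identity reduces the estimate to bounding the right-hand side in operator norm.

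Next, since $A := \Lin_{\tilde{E}}\restriction_{Q^-\ell^2(G)}$ has spectrum $\sigma(\Lin_{\tilde{E}})\setminus\{0\}$ by the standard Riesz-decomposition machinery, it is invertible on $Q^-\ell^2(G)$, with $\|A^{-1}\| \leq C$ for some finite constant $C$ determined by the finite-dimensional structure of $\Lin_{\tilde{E}}$ (the relevant eigenvalues lie outside a disk of radius $\delta$ and the Jordan structure is semisimple by Lemma \ref{vanishing_nilpotent}). A Neumann series expansion then gives
\begin{align}
(\beta A - z)^{-1} = \frac{1}{\beta}\, A^{-1} \sum_{k=0}^{\infty} \left(\frac{z}{\beta}\, A^{-1}\right)^{k},
\end{align}
which converges for $\beta > |z|\cdot\|A^{-1}\|$ and has operator norm $\lesssim 1/\beta$. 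Together with the boundedness of $Q^-$ (which follows from boundedness of $P^-$, itself guaranteed by the finite rank of $\Lin_{\tilde{E}}$), this yields the claimed $1/\beta$ decay.

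I do not anticipate a serious obstacle. The only point requiring care is that $P^-$ is generically not self-adjoint, so all "block-diagonal" manipulations have to be justified via the direct (rather than orthogonal) decomposition $\ell^2(G) = P^-\ell^2(G) \oplus Q^-\ell^2(G)$; this is legitimate because $P^-$ and $Q^-$ commute with $\Lin_{\tilde{E}}$ and satisfy $P^- \Lin_{\tilde{E}} = 0$, so the cross-terms in the block form of $\Lin_{\tilde{E}}$ vanish identically.
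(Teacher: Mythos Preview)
Your proof is correct and follows essentially the same route as the paper: block-diagonalize $\beta\Lin_{\tilde{E}}-z$ with respect to the Riesz decomposition $P^-\oplus Q^-$ using Lemma~\ref{vanishing_nilpotent}, then bound the $Q^-$-block by a Neumann series to extract the $1/\beta$ factor (the paper phrases the inverse of the $Q^-$-block as a Penrose pseudo-inverse, but the content is identical). One minor point: Lemma~\ref{vanishing_nilpotent} only asserts semisimplicity at the eigenvalue $0$, not on all of $Q^-\ell^2(G)$, but this is harmless since $Q^-\ell^2(G)$ is finite-dimensional and $A$ is invertible there, so $\|A^{-1}\|<\infty$ regardless of the Jordan structure at the nonzero eigenvalues.
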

\begin{proof} 
	Since $\tilde{E}$ is a finite set, $0 \in \sigma(\Lin)$ is isolated, and the Riesz Projector onto this point in the spectrum is well-defined.
	With $Q^- = Id - P^-$, we may thus write
	$L_{E \setminus \hat{E}} = P^- L_{E \setminus \hat{E}} P^- + Q^- L_{E \setminus \hat{E}} Q^-$.  By
	Lemma \ref{vanishing_nilpotent} we have
	$P^- L_{E \setminus \hat{E}} P^- = 0$. Thus block inversion, yields
	\begin{align}
		(\beta L_{E \setminus \hat{E}}  -z I_{\ell^2(G)})^{-1}
		=
		\begin{pmatrix}
			- \frac{1}{z} P^-  & 0 \\
			0 & (\beta Q^- \Lin_{E \setminus \hat{E}}  Q^- - zQ^-)^{-1} 
		\end{pmatrix}.
	\end{align}
	Thus we have 
	\begin{align}
		\|  (\beta \Lin_{\tilde{E}} - z)^{-1} - P^-/(-z)  \| = \|Q^-( Q^- L_{E \setminus \hat{E}}  Q^- - zQ^-/\beta)^{-1}Q^- \|/\beta \leq 2\| [Q^- L_{E \setminus \hat{E}}  Q^-]^+\|/\beta.
	\end{align}
	Here '$^+$' denotes the Penrose pseudo-inverse, and the last estimate follows by a Neumann argument similar to the one in the proof of Theorem \ref{undir_appr_thm}.
\end{proof}

\begin{proof}[Proof of Theorem \ref{-_conv_theorem}]
	Using Lemma \ref{aux_lemma_-}, it suffices to establish 
	\begin{align}
		\| (\Lin_\beta - z)^{-1} - J^-_{\uparrow} (\underline{L}^- -z)^{-1}  J^-_{\downarrow} \|
		\leq C \| (\beta \Lin_{\tilde{E}} -z)^{-1} - P^-/(-z)\| 
	\end{align}
	to prove Theorem  \ref{-_conv_theorem}.
	We again first establish resolvent norm-closeness to $P^- \Lin_{E \setminus \tilde{E}} P^-$, and then show $P^- \Lin_{E \setminus \tilde{E}} P^- = J^-_{\uparrow} \underline{L}^- J^-_{\downarrow}$.

	\medskip

	By the same line of reasoning as in the proof of Theorem \ref{undir_appr_thm}, we find:	 
	\begin{align}
		&\left\| (\Lin_{E \setminus \tilde{E}} + \beta \Lin_{\tilde{E}} - z I)^{-1} - P^- ( \Lin_{E \setminus \tilde{E}} P^- - z I)^{-1} P^- \right\| \\
		=& \left\| [-z(\beta \Lin_{\tilde{E}} -z)^{-1} (-z \Lin_{E \setminus \tilde{E}} (\beta \Lin_{\tilde{E}} -z)^{-1} - z)^{-1}] -  P^- ( \Lin_{E \setminus \tilde{E}} P^- - z I)^{-1} \right\| \\
		\leq &     
		\left\| [-z(\beta \Lin_{\tilde{E}} -z)^{-1} (-z \Lin_{E \setminus \tilde{E}} (\beta \Lin_{\tilde{E}} -z)^{-1} - z)^{-1}] -  [-z(\beta \Lin_{\tilde{E}} -z)^{-1}( \Lin_{E \setminus \tilde{E}} P^- - z I)^{-1} ] \right\| \\
		+& \left\| [-z(\beta \Lin_{\tilde{E}} -z)^{-1}( \Lin_{E \setminus \tilde{E}} P^- - z I)^{-1} ] -  P^- ( \Lin_{E \setminus \tilde{E}} P^- - z I)^{-1} \right\|
	\end{align}
	For the first term, we find 
	\begin{align}
		& 	\left\| \left[ -z(\beta \Lin_{\tilde{E}} -z)^{-1} \left(-z \Lin_{E \setminus \tilde{E}} (\beta \Lin_{\tilde{E}} -z)^{-1} - z \right)^{-1} \right] -  \left[ -z(\beta \Lin_{\tilde{E}} -z)^{-1}( \Lin_{E \setminus \tilde{E}} P^- - z I)^{-1} \right] \right\| \\
		\leq & \, |z| \cdot \left\| (\beta \Lin_{\tilde{E}} -z)^{-1} \right\| \cdot \left\|
		\left(-z \Lin_{E \setminus \tilde{E}} (\beta \Lin_{\tilde{E}} -z)^{-1} - z\right)^{-1} - ( \Lin_{E \setminus \tilde{E}} P^- - z I)^{-1} 
		\right\| \\
		\leq & \,|z|\left( \|P^-\|/|z|   +   2\|[Q\Lin_{\hat{E}}Q]^+\|/\beta   \right) \\
		\cdot&
		\left\|(\Lin_{E \setminus \tilde{E}} P^- -z )^{-1} \sum\limits_{k = 1}^{\infty} \left[-(\Lin_{E \setminus \tilde{E}} P^- -z )^{-1} \Lin_{E \setminus \tilde{E}}  \left[-z (\beta \Lin_{\tilde{E}} -z)^{-1}- P^-\right]\right]^k\right\| \\
		\lesssim & 	 \left\|-z (\beta \Lin_{\tilde{E}} -z)^{-1}- P^-\right\|.
	\end{align}
	For the second term, we find 
	\begin{align}
		&	\|  -z(\beta \Lin_{\tilde{E}} -z)^{-1}( \Lin_{E \setminus \tilde{E}} P^- - z I)^{-1}  
		-  P^- ( \Lin_{E \setminus \tilde{E}} P^- - z I)^{-1}    \| \\
		\leq &
		|z|	\cdot \|( \Lin_{E \setminus \tilde{E}} P^- - z I)^{-1}  \| \cdot \|  (\beta \Lin_{\tilde{E}} -z)^{-1}
		-  P^-   \|.
	\end{align}	 
	Thus convergence of $\mathcal{O}(1/\beta)$ towards $P^- \Lin P^-$ is proved.
	
	\medskip		 
	It remains to establish	that
	$P^- \Lin_{E \setminus \tilde{E}} P^- = J^-_{\uparrow} \underline{L}^- J^-_{\downarrow}$. 	This follows after noting $\underline{L}^- = J^-_{\downarrow} \Lin_{E\setminus \tilde{E}}  J^-_\uparrow$.	 Indeed:
	$
	\langle e_R, J^-_{\downarrow} \Lin_{E\setminus \tilde{E}}  J^-_\uparrow e_S\rangle_{\ell^2(\underline{G}^-)} = \langle e_R, e_R\rangle_{\ell^2(\underline{G}^-)}\cdot
	\langle\cev{\gamma}^-_R,  \Lin_{E\setminus \tilde{E}} \vec{\gamma}^-_S \rangle_{\ell^2(G)}$.
	We have $ \langle e_R, e_R\rangle_{\ell^2(\underline{G}^-)} =  \underline{m}^-(R) $.  For $\langle\cev{\gamma}^-_R,  \Lin_{E\setminus \tilde{E}} \vec{\gamma}^-_S \rangle_{\ell^2(G)}$, we find, that if $R \neq S$:
	\begin{align}
		\langle\cev{\gamma}^-_R,  \Lin_{E\setminus \tilde{E}} \vec{\gamma}^-_S \rangle_{\ell^2(G)} = - 	\sum\limits_{r \in R, s \in S} 
		[
		\cev{\gamma}^{-}_{R}(r)
		a(r,s)
		\vec{\gamma}_S^{-}(s)
		].
	\end{align}
	Thus the off diagonal elements of $J^-_{\downarrow} \Lin_{E\setminus \tilde{E}}  J^-_\uparrow $ agree with those of $\underline{\Lin}$ as associated via Definition \ref{laplacian_definition} to the graph $\underline{G}^-$ determined by Definition \ref{indeg_coarse_G_def}.
	
	\medskip
	To see that also the diagonal entries of the two operators agree, we note that for fixed $R$, we have (since by Proposition \ref{right_kernel_Lin} the $\{\vec{\gamma}_S\}_{S \in \mathfrak{R}}$ are a  partition of unity) 
	$
		\sum_{S \in \underline{V}} \langle e_R, J^-_{\downarrow} \Lin_{E\setminus \tilde{E}}  J^-_\uparrow e_S\rangle_{\ell^2(\underline{G}^-)} =  \underline{m}^-(R) \cdot \sum_{S \in \underline{V}} \langle\cev{\gamma}^-_R,  \Lin_{E\setminus \tilde{E}} \vec{\gamma}^-_S \rangle_{\ell^2(G)} = 0
$.
	Thus we indeed find $J^-_{\downarrow} \Lin_{E\setminus \tilde{E}}  J^-_\uparrow = \underline{L}^-$, since
	\begin{align}
		-	\langle e_R, J^-_{\downarrow} \Lin_{E \setminus \tilde{E}} J^-_\uparrow e_R \rangle_{\ell^2(\underline{G}^-)}  
		=  \sum_{\substack{ S \in \underline{V} \\ S \neq R}} \langle e_R, J^-_{\downarrow} \Lin_{E \setminus \tilde{E}} J^-_\uparrow e_S \rangle_{\ell^2(\underline{G}^-)} 
		=  \sum_{\substack{ S \in \underline{V} \\ S \neq R}} \langle e_R, \underline{L}^- e_S \rangle_{\ell^2(\underline{G}^-)}.
	\end{align}
\end{proof}

From the proof of the Theorem \ref{-_conv_theorem}, we may also immediately find that 
the Laplacian on $\tilde{E}$ need not diverge linearly:

\begin{Cor}
	Instead of linear scaling, consider a modification $a \mapsto a_\beta$ on $\tilde{E}$ so that for some projection $P$ onto the kernel of a fixed auxiliary Laplacian on $(V,\tilde{E})$. Suppose  the family of Laplacians $\Lin_{\tilde{E}, \beta}$ on $\tilde{E}$ satisfies $\| (\Lin_{\tilde{E},\beta} -z)^{-1} - P^-/(-z)\| \rightarrow 0$. Then
	$
	\| (\Lin_\beta - z)^{-1} - J^-_{\uparrow} (\underline{L}^- -z)^{-1}  J^-_\downarrow \|\leq C \| (\Lin_{\tilde{E},\beta} -z)^{-1} - P^-/(-z)\| \xrightarrow{\beta \rightarrow \infty} 0.
	$
\end{Cor}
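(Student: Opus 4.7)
The plan is to revisit the proof of Theorem \ref{-_conv_theorem} and observe that the linear structure $\Lin_\beta = \Lin_{E \setminus \tilde{E}} + \beta \Lin_{\tilde{E}}$ entered there only through Lemma \ref{aux_lemma_-}, namely the rate $\| (\beta \Lin_{\tilde{E}} - z)^{-1} - P^-/(-z)\| \lesssim 1/\beta$. Replacing this single quantitative input by the hypothesis of the corollary and rerunning the same argument should yield the claimed bound.

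Since $P^-$ is by assumption the Riesz projection onto the kernel of the \emph{same} auxiliary Laplacian on $(V,\tilde{E})$ used to construct $\underline{G}^-$, Definitions \ref{indeg_coarse_G_def} and \ref{proj_interp_indeg} produce the same reduced graph, projector, and interpolator as before. In particular the factorization $P^- = J^-_{\uparrow} J^-_{\downarrow}$ and the identification $P^- \Lin_{E\setminus\tilde{E}} P^- = J^-_\uparrow \underline{L}^- J^-_\downarrow$ remain valid verbatim, so I would reduce matters to establishing
\[
\bigl\| (\Lin_\beta - z)^{-1} - P^- (\Lin_{E\setminus\tilde{E}} P^- - z)^{-1} P^- \bigr\| \;\lesssim\; \bigl\| (\Lin_{\tilde{E},\beta} - z)^{-1} - P^-/(-z) \bigr\|.
\]

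Writing $\Lin_\beta = \Lin_{E\setminus\tilde{E}} + \Lin_{\tilde{E},\beta}$ and factoring the resolvent exactly as in the proof of Theorem \ref{-_conv_theorem}, I would insert the intermediate quantity $-z(\Lin_{\tilde{E},\beta}-z)^{-1}(\Lin_{E\setminus\tilde{E}}P^- - z)^{-1}$ and split the norm via a triangle inequality into two pieces. The first piece is controlled by a Neumann expansion for $(-z\Lin_{E\setminus\tilde{E}}(\Lin_{\tilde{E},\beta}-z)^{-1} - z)^{-1}$ and contributes a factor $\|(\Lin_{\tilde{E},\beta}-z)^{-1} - P^-/(-z)\|$; the second is bounded directly by $|z|\cdot\|(\Lin_{E\setminus\tilde{E}}P^- - z)^{-1}\| \cdot \|(\Lin_{\tilde{E},\beta}-z)^{-1} - P^-/(-z)\|$. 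Both prefactors are uniform in $\beta$, since the convergence hypothesis combined with a triangle inequality yields a uniform upper bound on $\|(\Lin_{\tilde{E},\beta}-z)^{-1}\|$.

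The only real obstacle is ensuring the Neumann expansion converges in this more general setting. In Theorem \ref{-_conv_theorem} convergence was guaranteed by taking $\beta \gg 1$; here the substitute is that $\|(\Lin_{\tilde{E},\beta}-z)^{-1} - P^-/(-z)\|$ becomes arbitrarily small as $\beta \to \infty$, which is precisely the hypothesis. For $\beta$ large enough that this quantity is below the convergence threshold of the Neumann series, the argument of Theorem \ref{-_conv_theorem} then transfers line by line, and the claimed estimate follows.
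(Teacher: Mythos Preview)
Your proposal is correct and follows exactly the paper's approach: the paper itself merely remarks that the corollary is immediate from the proof of Theorem \ref{-_conv_theorem}, since the linear scaling entered only through Lemma \ref{aux_lemma_-} and can be replaced by the assumed resolvent convergence of $\Lin_{\tilde{E},\beta}$. Your observations about the Neumann series threshold and the uniform bound on $\|(\Lin_{\tilde{E},\beta}-z)^{-1}\|$ via the triangle inequality are precisely the details needed to make this transfer rigorous.
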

It should however be noted that the condition $\| (\Lin_{\tilde{E},\beta} -z)^{-1} - P^-/(-z)\| \rightarrow 0$ is much stronger than merely demanding $a^\beta(i,j) \rightarrow \infty$ for edges $(i,j) \in \tilde{E}$, as was considered e.g. in the undirected setting of Theorem \ref{undir_appr_thm_strongly}. The underlying reason is that  the left- and right- kernels of di-graph Laplacians depend not only on the reaches of $G$ as sets, but also the specific connectivity structure inside these reaches.

Let us now consider some examples to which Theorem \ref{-_conv_theorem} applies:
\begin{Ex}
In maybe the most straightforward general setting, the reaches of $ (V, \tilde{E})$ are all node-disjoint (i.e. no reaches share common nodes). In this case, the basis of  the right-kernel of $L^-$  of Proposition \ref{right_kernel_Lin} is simply given as $\{\chi_R\}_{R \in \mathfrak{R}}$. Similarly the corresponding basis of the left-kernel of Proposition \ref{prop:leftkernel_-} is simply given as $\{\omega_R/\langle\omega_R,\chi_R\rangle\}_{R \in \mathfrak{R}}$. Masses 
 of the individual reaches are simply given as $\underline{m}^-(R) = \sum_{r \in R} m(r)$. Edge weights are accumulated as $
		\underline{a^-}(R, S) := \frac{\underline{m}^-(R)}{\langle \omega_R, \chi_R\rangle}\cdot\sum_{r \in R, s \in S} 
	\omega_R(r)
	a(r,s)
$. 
\end{Ex}

	\noindent		\begin{minipage}[b]{0.7\textwidth}
			\begin{Ex}\label{dir_two_edges_div}	Let us revisit the setting of Example \ref{one_edge_div}:
	We again consider an original graph $G$ on three nodes. However, as depicted in Fig. \ref{fig:graphs_dir}, we now split the originally undirected edges between nodes 
	into two directed edges with different weights. 
		The Laplacian $\beta \Lin_{\tilde{E}}$ is then given as 
	\begin{align}
\beta	\Lin_{\tilde{E}} =   \beta  M^{-1}		\begin{pmatrix}
			0 & 0 & 0 \\
			0&\eta & -\eta \\
			0 & -\rho & \rho
		\end{pmatrix}.
	\end{align}
	The right kernel of this operator is spanned by  $ \vec{\gamma}^{-}_{R_1} = (1,0,0)^\intercal$, $\vec{\gamma}^{-}_{R_2} =  (0,1,1)^\intercal$. A basis of the left kernel is given 
\end{Ex}
	\end{minipage}
\begin{minipage}[b]{0.4\textwidth}
	\centering
	\begin{minipage}[b]{0.6\textwidth}
		\raggedleft
		\begin{tikzpicture}
			\node[draw, circle, minimum size=12pt, inner sep=0pt] (a) at (1,1.73) {$1$};
			\node[draw, circle, minimum size=12pt, inner sep=0pt] (b) at (0,0) {$2$};
			\node[draw, circle, minimum size=12pt, inner sep=0pt] (c) at (2,0) {$3$};

			\draw[->] (a) to node[right] {} (b); 
			\draw[->, bend left] (b) to node[left] {} (a); 
			\draw[red,->, bend left] (b) to node[black, above] {$\rho\beta$} (c); 
			\draw[red,->, bend left] (c) to node[black, below] {$\eta \beta$} (b); 

			\draw[->, bend left] (a) to node[right] {} (c); 
			\draw[->] (c) to node[left] {} (a); 
		\end{tikzpicture}
		\par\vspace{0.0ex}
		\centering{(a) $G$}
	\end{minipage}
	\hfill
	\begin{minipage}[b]{0.35\textwidth}
		\raggedright
		\begin{tikzpicture}
			\node[draw, circle, minimum size=12pt, inner sep=0pt] (a) at (1,1.73) {$R_1$};
			\node[draw, circle, minimum size=12pt, inner sep=0pt] (bc) at (1,0) {$R_2$};  
			
			\draw[->, bend left] (a) to node[right] {} (bc); 
			\draw[->, bend left] (bc) to node[left] {} (a); 
		\end{tikzpicture}
		\par\vspace{3.5ex}
		(b) $\underline{G}^-$
	\end{minipage}
	
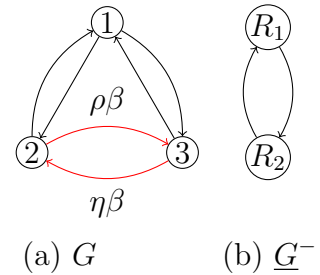
\captionof{figure}{Original graph $G$ and reduced graph $\underline{G}$.}
	\label{fig:graphs_dir}
\end{minipage}

\noindent by $\cev{\gamma}^-_{R_1} = (1/m(1), 0 , 0)^\intercal$ and $\cev{\gamma}^-_{R_2} =  (0, \rho, \eta)/[m(2)\rho + m(3)\eta ]  $. For the masses of the respective reaches we easily find $\underline{m}^-(R_1) = m(a)$ and $\underline{m}^-(R_2) = m(b) + m(c)$. For the edge weights of the graph $\underline{G}^-$, we find:
\begin{align}
	\underline{a}^-(R_1,R_2) &= \underline{m}^-(R_1)\sum\limits_{r \in R_1, s \in R_2} 
	[
	\cev{\gamma}^{-}_{R_1}(r)
	a(r,s)
	\vec{\gamma}_{R_2}^{-}(s)
	] 
	= 
a(1,2) + a(1,3)\\
 	\underline{a}^-(R_2,R_1) &= \underline{m}^-(R_2)\sum\limits_{r \in R_2, s \in R_1} 
 [
 \cev{\gamma}^{-}_{R_1}(r)
 a(r,s)
 \vec{\gamma}_{R_2}^{-}(s)
 ] 
 = \frac{m(2) + m(3)}{m(2)\rho + m(3)\eta}(\rho a(2,1) + \eta a(3,1)).
\end{align}

\noindent 		\begin{minipage}[b]{0.7\textwidth}
	\begin{Ex}\label{counter_intuitive_example} Finally we want to showcase how we may lose connectivity between 
	reaches in the limit $\beta \rightarrow \infty$. Consider again the setting of a three node graph, as in Example \ref{dir_two_edges_div}. However, now assume no incoming edges at node $1$ (i.e.  $a(1,j) = 0$ for $j=1,2$). Furthermore assume no edge from node $1$ to node $2$ (i.e. $a(2,1) = 0$) and no edge from node $3$ to node $2$ (i.e. $\eta = 0$); c.f. Fig. \ref{fig:graphs_dir_II} (a). For this configuration, we find $\underline{a}^-(R_1,R_2) = \underline{a}^-(R_2,R_1) = 0 $. Thus there is no connection between the two nodes in $\underline{G}^-$. We can understand this from the perspective of 'heat flow'; i.e. via the associated 'heat' semigroup $e^{-t \Lin_\beta}$:	
\end{Ex}
	\end{minipage}
	\begin{minipage}[b]{0.4\textwidth}
		\centering
		\begin{minipage}[b]{0.6\textwidth}
			\raggedleft
			\begin{tikzpicture}
				\node[draw, circle, minimum size=12pt, inner sep=0pt] (a) at (1,1.73) {$1$};
				\node[draw, circle, minimum size=12pt, inner sep=0pt] (b) at (0,0) {$2$};
				\node[draw, circle, minimum size=12pt, inner sep=0pt] (c) at (2,0) {$3$};

				\draw[->] (a) to node[right] {} (c); 
				\draw[red,->] (b) to node[black, above] {$\beta$} (c); 
			\end{tikzpicture}
			\par\vspace{.5ex}
			\centering{(a) $G$}
		\end{minipage}
		\hfill
		\begin{minipage}[b]{0.35\textwidth}
			\raggedright
			\begin{tikzpicture}
				\node[draw, circle, minimum size=12pt, inner sep=0pt] (a) at (1,1.73) {$R_1$};
				\node[draw, circle, minimum size=12pt, inner sep=0pt] (bc) at (1,0) {$R_2$};  
			\end{tikzpicture}
			\par\vspace{0ex}
			(b) $\underline{G}^-$
		\end{minipage}
		
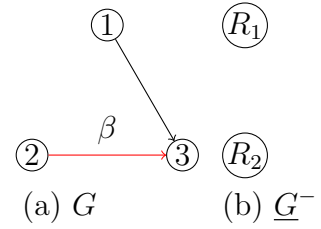
\captionof{figure}{Original graph $G$ and reduced graph $\underline{G}^-$.}
		\label{fig:graphs_dir_II}
	\end{minipage}	

	\noindent	In the setting of Example \ref{counter_intuitive_example}, let us for simplicity set $m(i) = 1$ for all nodes $i = 1,2,3$. With $a(2,1) =1$, $a(3,2) = \beta$ and other edge-weights zero, an explicit calculation shows
		\begin{align}\label{explicit_rep}
	J^-_\uparrow e^{-t\underline{L}^-} J^-_\downarrow &= J^-_\uparrow J^-_\downarrow = \begin{pmatrix}
	1 & 0 & 0 \\
	0 & 1 & 0 \\
	0 & 1 & 0
\end{pmatrix},
\quad	
e^{-t \Lin_\beta}
=
\begin{pmatrix}
	1 & 0 & 0 \\
	0 & 1 & 0 \\
	\frac{1 - e^{-(\beta + 1)t}}{\beta + 1} & -\frac{\beta (e^{-(\beta + 1)t} - 1)}{\beta + 1} & e^{-(\beta + 1)t}
\end{pmatrix}.
		\end{align}
Thus we have the convergence $e^{-t\Lin_\beta} \rightarrow J^-_\uparrow  e^{-t\underline{L}^-} J^-_\downarrow$ as $\beta \rightarrow \infty$.
 The 'heat-flow' from node $1$ to node $3$ at time $t$ is given by $[e^{-t\Lin_\beta}]_{31} = 	(1 - e^{-(\beta + 1)t})/(\beta + 1) = \mathcal{O}(1/\beta)$. As $\beta \gg 1$ increases, the flow from node $1$ to node $3$ becomes increasingly suppressed, until it vanishes at $\beta = \infty$. 
We may intuitively understand this from the perspective of probability flows on the graph $G$: As argued in  \cite[Section 6]{Veerman2020APO}, the 'heat kernel' $e^{-t\Lin}$ generates a probability flow \emph{counter} to directed edges. In the graph $G$, probability may hence flow only \emph{from} node $3$ \emph{to} node $1$ or $2$. In the setting $\beta \gg 1$, the connection to node $2$ is much stronger than to node $1$. Hence most probability mass flows to node $2$, where it gets trapped. By probability mass conservation, this implies less and les probability mass arrives at node $1$, until the connection is completely severed when $\beta = \infty$. This behaviour is reflected in the disconnected  limit graph structure of $\underline{G}^-$.

We note that in the directed setting, the semigroup $t \mapsto e^{-t \Lin_\beta}$ thus behaves somewhat counter-intuitively, and an interpretation in terms of 'heat -flow' becomes challenging. Nevertheless, the fact that $e^{-t\Lin_\beta} \rightarrow J^-_\uparrow  e^{-t\underline{L}^-} J^-_\downarrow$ as $\beta \rightarrow \infty$ in Example \ref{counter_intuitive_example} could be used to shed light on the limit graph structure $\underline{G}^-$. Here we hence establish that this convergence behaviour of the semigroup persists for all finite graphs.\footnote{Our proof may be extended to directed sectorial (c.f. e.g.  \cite{anne2018sectoriality})  graph Laplacians that are also trace-class (or more generally 
Schatten class for finite $p$). 
The author is  
unaware of any
method to extend Theorem \ref{directed_semi_group_convergence} beyond this: Sectoriality is crucial in establishing uniform boundedness of $z \mapsto e^{-z\Lin_\beta} $. This together with a Schatten norm estimate \cite{Bandtlow2003} then allows to apply results of \cite{arendt2001approximation} to translate the resolvent convergence of Theorem \ref{-_conv_theorem} into the semigroup convergence 
of Theorem \ref{directed_semi_group_convergence}. 
}

\begin{Thm}\label{directed_semi_group_convergence}
In the setting of Theorem \ref{-_conv_theorem} with $G$ a finite graph, we have for any $t  > 0$ the convergence 
 $\|e^{-t\Lin_\beta} - J^-_\uparrow  e^{-t\underline{L}^-} J^-_\downarrow\| \rightarrow 0$ as $\beta \rightarrow \infty$.
\end{Thm}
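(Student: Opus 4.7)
The plan is to reduce the semigroup convergence to the resolvent convergence of Theorem \ref{-_conv_theorem} via the Dunford--Riesz functional calculus, exploiting that for finite $G$ the space $\ell^2(G)$ is finite dimensional. A Gershgorin-disk calculation on the matrix representation of $\Lin_\beta$ shows that $\sigma(\Lin_\beta)$ lies in a union of closed disks tangent to the imaginary axis at the origin, hence inside a fixed sector $\Sigma_\theta := \{z : |\arg z| \leq \theta\}$ with $\theta < \pi/2$ that does not depend on $\beta$. In particular the family $\{\Lin_\beta\}_\beta$ is uniformly sectorial, and applied to the perturbation $\Lin_{\tilde E}$ the same bound guarantees that $\sigma(\Lin_{\tilde E}) \setminus \{0\}$ lies strictly in the open right half plane. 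Combined with the semisimplicity of $0 \in \sigma(\Lin_{\tilde E})$ from Lemma \ref{vanishing_nilpotent}, standard matrix perturbation theory applied to $\Lin_\beta/\beta = \beta^{-1}\Lin_{E\setminus\tilde E} + \Lin_{\tilde E}$ splits $\sigma(\Lin_\beta)$, for all large $\beta$, into a slow family clustering near $\sigma(\underline{L}^-)$ and a fast family whose real parts are bounded below by $c\beta$ for some $c > 0$ independent of $\beta$.

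Fix now a bounded contour $\Gamma \subset \mathds{C} \setminus [0,\infty)$ enclosing $\sigma(\underline{L}^-)$ and, for all large $\beta$, separating the slow and fast parts of $\sigma(\Lin_\beta)$. Set $P_\beta^s := (2\pi i)^{-1}\oint_\Gamma (z - \Lin_\beta)^{-1}\,dz$; by Theorem \ref{-_conv_theorem} we have $\|P_\beta^s - P^-\| \lesssim 1/\beta$. Decomposing
\begin{equation*}
e^{-t\Lin_\beta} \;=\; \frac{1}{2\pi i}\oint_\Gamma e^{-tz}(z - \Lin_\beta)^{-1}\,dz \;+\; e^{-t\Lin_\beta}(I - P_\beta^s),
\end{equation*}
the first summand converges in operator norm to $J^-_\uparrow e^{-t\underline{L}^-} J^-_\downarrow$ at rate $1/\beta$: Theorem \ref{-_conv_theorem} gives uniform resolvent convergence on the compact set $\Gamma$, and $(2\pi i)^{-1}\oint_\Gamma e^{-tz}J^-_\uparrow(z - \underline{L}^-)^{-1} J^-_\downarrow\, dz = J^-_\uparrow e^{-t\underline{L}^-} J^-_\downarrow$ by the Dunford--Riesz formula for $\underline{L}^-$.

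The main remaining task --- and the main obstacle --- is the estimate $\|e^{-t\Lin_\beta}(I - P_\beta^s)\| \to 0$ for every $t > 0$. I would represent this term as a Cauchy integral over a contour $\Gamma_f$ tightly enclosing the fast eigenvalues at distance $\gtrsim \beta$ from the origin, so that $|e^{-tz}| \leq e^{-ct\beta}$ uniformly on $\Gamma_f$. The non-normality of $\Lin_\beta$ could \emph{a priori} destroy this decay through large Jordan-block prefactors in $\|(z - \Lin_\beta)^{-1}\|$ along $\Gamma_f$, but the uniform sectoriality established in the first paragraph rules this out: it furnishes a $\beta$-independent resolvent bound $\|(z - \Lin_\beta)^{-1}\| \lesssim 1/\dist(z, \sigma(\Lin_\beta))$ on $\Gamma_f$, yielding $\|e^{-t\Lin_\beta}(I - P_\beta^s)\| \lesssim e^{-ct\beta}$ up to at most polynomial factors in $\beta$, which vanishes as $\beta \to \infty$. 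Equivalently, once uniform sectoriality is in hand, $\{e^{-z\Lin_\beta}\}_\beta$ is a uniformly bounded holomorphic semigroup on $\Sigma_\theta$, and one may appeal directly to \cite[Theorem 5.2]{arendt2001approximation} to promote the norm-resolvent convergence of Theorem \ref{-_conv_theorem} to the claimed norm convergence of semigroups.
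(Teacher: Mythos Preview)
Your overall architecture—splitting the spectrum into slow and fast parts via a Dunford contour and invoking Theorem~\ref{-_conv_theorem} on the slow piece—is the same as the paper's. The gap is exactly at the step you flag as ``the main obstacle'': the uniform resolvent control on the fast part is not actually supplied by your argument.

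First, the Gershgorin claim is geometrically false. Row $x$ of $\Lin_\beta$ gives a disk $\{|z-r|\leq r\}$ with $r=\din_\beta(x)/m(x)$, and such disks are \emph{not} contained in any sector $\Sigma_\theta$ with $\theta<\pi/2$: parametrizing the boundary as $z=r(1+e^{i\phi})$, one has $\arg z\to\pm\pi/2$ as $\phi\to\pi$. So Gershgorin only recovers the closed right half-plane, which was already known from \cite{Veerman2020APO}.

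Second, and more fundamentally, even if $\sigma(\Lin_\beta)\subset\Sigma_\theta$ held uniformly, this would \emph{not} furnish the estimate $\|(z-\Lin_\beta)^{-1}\|\lesssim 1/\dist(z,\sigma(\Lin_\beta))$ you use on $\Gamma_f$. For non-normal matrices this is precisely what fails—the pseudospectrum can extend far beyond any neighbourhood of the spectrum—and spectral location alone says nothing about it. ``Uniform sectoriality'' in the semigroup sense requires a uniform resolvent bound $\|(z-\Lin_\beta)^{-1}\|\leq M/|z|$ outside a larger sector; that is a separate analytic input, not a consequence of eigenvalue location.

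The paper closes this gap as follows. It rescales to $\Lin_{\tilde E}+\delta\,\Lin_{E\setminus\tilde E}$ with $\delta=1/\beta$, uses finiteness to place the nonzero eigenvalues of $\Lin_{\tilde E}$ (and, by continuity, of the perturbed operator for $\delta\ll1$) strictly inside a sector $\Sigma_\phi$ with $\phi<\pi/2$, and then bounds the resolvent along the fast contour via Bandtlow's trace-class estimate
\[
\|(A-\zeta)^{-1}\|\leq \dist(\zeta,\sigma(A))^{-1}\exp\!\bigl(\|A\|_1/\dist(\zeta,\sigma(A))\bigr)
\]
from \cite[Theorem~4.1]{Bandtlow2003}. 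Since $\|\Lin_{\tilde E}+\delta\,\Lin_{E\setminus\tilde E}\|_1$ stays bounded as $\delta\downarrow0$, this yields a $\beta$-uniform bound on the contour integral and hence $\|e^{-z\Lin_\beta}\|\leq M$ on a sector; the conclusion then follows from \cite[Theorem~1.6]{arendt2001approximation}. Your fast-part estimate would go through if you replaced the unsupported resolvent bound by Bandtlow's inequality applied to $\Lin_\beta/\beta$—that is the missing ingredient.
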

\begin{proof}
Assume we had established that there is a sector $\Sigma_\theta \subset \mathds{C}$ of opening angle $2\theta$ and a constant $M>0$ so that $\|e^{-t\Lin_\beta}\| \leq M$ uniformly in $z \in \Sigma_\theta$ and  $\beta \uparrow \infty$. Then by \cite[Theorem 1.6]{arendt2001approximation} we have that $e^{-z\Lin_\beta}$ converges  towards a  limit operator, with the convergence uniform on compact subsets of 
$\Sigma_\theta $. Denote the limit operator by $T(z)$. By norm-convergence, the operator family $\Sigma_\theta \ni z \mapsto T(z)$ inherits the semigroup property $T(z)T(z') = T(z + z')$ as well as boundedness $\|T(z)\| \leq M, \forall z \in \Sigma_\theta$.
Since we are in finite dimensions, Bolzano Weierstrass guarantees that any sequence $\{t_n\}_{n \in \mathds{N}}$ with $t_{n-1} > t_{n} \rightarrow 0 $ has a strictly decreasing subsequence $\{t_{n_i}\}_{i \in \mathds{N}}$ so that $T(t_{n_i})$ converges. By holomorphicity and the semigroup property, this can be extended to the convergence $\lim_{t \rightarrow 0} T(t) = T(0)$, with a unique limit $T(0)$. By \cite[Section 2]{arendt2001approximation}, $T(0)$ is a projection and $t \mapsto T(t)$ is a $C_0$ semigroup on $T(0)\ell^2(G)$. On the complement of this space,  $T(t)$ vanishes identically: $T(t){\restriction_{(I - T(0))\ell^2(G)}} \equiv 0$. Associated to the semigroup $T(t){\restriction_{T(0)\ell^2(G)}} $ is a generator $A$ whose resolvent at $-\lambda$  may be calculated as
$
	(A + \lambda)^{-1} = \int_0^\infty T(t)e^{-\lambda} dt, \quad \text{for $\Re(\lambda) > 0$}.
$
By dominated convergence for Bochner integrals, we have as $\beta \rightarrow \infty$, that
$
\lim\limits_{\beta \rightarrow \infty}\int_0^\infty e^{-t\Lin_\beta}e^{-\lambda} dt \rightarrow	\int_0^\infty T(t)e^{-\lambda} dt.
$
However, we also have
$
	\lim\limits_{\beta \rightarrow \infty}\int_0^\infty e^{-t\Lin_\beta}e^{-\lambda} dt 
	 = 	\lim\limits_{\beta \rightarrow \infty} (\Lin_\beta - \lambda)^{-1}
	=J^-_\uparrow  (\underline{L}^- - \lambda)^{-1} J^-_\downarrow ,
$
which yields $A  = J^-_\uparrow \underline{L}^- J^-_\downarrow$.
\medskip

		To establish the claim, it thus remains to verify our initial assumption and establish that there is a sector $\Sigma_\theta \subset \mathds{C}$ of opening angle $2\theta$ and a constant $M>0$ so that $\|e^{-t\Lin_\beta}\| \leq M$ uniformly in $z \in \Sigma_\theta$ and  $\delta \downarrow 0$. We first note that upon setting $y = z/\beta$ and $\delta = 1/\beta$, we have the following equivalence:
	\begin{align}
		\left\|e^{-z(\Lin_{E\setminus \tilde{E}} + \beta \Lin_{\tilde{E}})}\right\| \leq M\ (\forall z \in \Sigma_\theta,\ \beta \uparrow \infty)
		\quad \Leftrightarrow \quad 
		\left\|e^{-y(\Lin_{\tilde{E}} + \delta \Lin_{E\setminus \tilde{E}})}\right\| \leq M\ (\forall y \in \Sigma_\theta,\ \delta \downarrow 0)
	\end{align}

	All eigenvalues of $\Lin_{\tilde{E}}$ and $\Lin_{\tilde{E}} + \delta \Lin_{E\setminus \tilde{E}}$ have non-negative real part \cite{Veerman2020APO}. By finiteness, there thus exists $0< \phi < \pi/2$ so that all eigenvalues $\lambda \in \sigma(\Lin_{\tilde{E}})$ are contained in the interior of the wedge  $\mathring{\Sigma}_\phi \supseteq \sigma(\Lin_{\tilde{E}})$.  Since roots of (characteristic) polynomials depend continuously on polynomial coefficients,  eventually  also  all eigenvalues of $\Lin_{\tilde{E}} + \delta \Lin_{E\setminus \tilde{E}}$ lie within $\mathring{\Sigma}_\phi$; as soon as $\delta \downarrow 0$ is sufficiently small. 
	Using contour integration, we have
	\begin{align}
			e^{-y(\Lin_{\tilde{E}} + \delta \Lin_{E\setminus \tilde{E}})} = P^-_{\Lin_{\tilde{E}} + \delta \Lin_{E\setminus \tilde{E}}} + \frac{1}{2 \pi i}\oint_\Gamma e^{-y\zeta}\left(\Lin_{\tilde{E}} + \delta \Lin_{E\setminus \tilde{E}} - \zeta\right)^{-1} d\zeta
	\end{align}
with $P^-_{\Lin_{\tilde{E}} + \delta \Lin_{E\setminus \tilde{E}}}$ the Riesz projection onto $0 \in \sigma(\Lin_{\tilde{E}} + \delta \Lin_{E\setminus \tilde{E}})$ and the path $\Gamma$ chosen as in Fig. \ref{integration_path}. By analytic perturbation theory  \cite[Chapter II]{kato1995perturbation}
	we find that $P^-_{\Lin_{\tilde{E}} + \delta \Lin_{E\setminus \tilde{E}}}$ depends continuously on $\delta$, and we have $P^-_{\Lin_{\tilde{E}} + \delta \Lin_{E\setminus \tilde{E}}} \rightarrow P^-_{\Lin_{\tilde{E}}}$ as $\delta \downarrow 0$.

		\noindent		\begin{minipage}[b]{0.7\textwidth}
 Thus for any $\epsilon > 0$  we find $\|P^-_{\Lin_{\tilde{E}} + \delta \Lin_{E\setminus \tilde{E}}}\| \leq  \|P^-_{\Lin_{\tilde{E}} }\|  + \epsilon$ for $\delta \ll 1$ sufficiently small.  Again by continuous dependence of eigenvalues on matrix entries, we note that there exists an $ \epsilon > 0$ so that 
$
\dist(\Gamma, \sigma(\Lin_{\tilde{E}} + \delta \Lin_{E\setminus \tilde{E}})) 
\geq \epsilon
$ uniformly in $\delta \downarrow 0$ (c.f. Fig \ref{integration_path}).
Thus by \cite[Theorem 4.1]{Bandtlow2003},  for
$\zeta \in \Gamma$:

\begin{align}
&\left\| (  \Lin_{\tilde{E}} + \delta \Lin_{E\setminus \tilde{E}}     - \zeta)^{-1} \right\| \\
	\leq& 
	\frac{1}{\dist(\zeta, \sigma(\Lin_{\tilde{E}} + \delta \Lin_{E\setminus \tilde{E}}))} \exp\left(  \frac{\|\Lin_{\tilde{E}} + \delta \Lin_{E\setminus \tilde{E}}\|_1}{\dist(\zeta, \sigma(\Lin_{\tilde{E}} + \delta \Lin_{E\setminus \tilde{E}}))} \right)\\
	\leq&
	e^{(\|\Lin_{\tilde{E}}\|_1 + \epsilon)/\epsilon}/\epsilon.
\end{align}

With this we find
\begin{align}
	\left\| \frac{1}{2 \pi i}\oint_\Gamma e^{-y\zeta}\left(\Lin_{\tilde{E}} + \delta \Lin_{E\setminus \tilde{E}} - \zeta\right)^{-1} d\zeta\right\|
	\leq 
	\frac{e^{\|\Lin_{\tilde{E}}\|_1/\epsilon + 1}}{2\pi \epsilon} \oint_\Gamma |e^{-y\zeta}|d\zeta.
\end{align}
	Next we note that there is a sector $\Sigma_\alpha \subseteq \mathds{C}$ so that the entire path is contained within this sector. Upon setting $\theta = \pi/2 - \alpha $, we have that for any $y \in \Sigma_\phi$ and $\zeta \in \Gamma \subseteq \Sigma_\alpha$ the argument $\arg(y\zeta)$ satisfies $\arg(y\zeta) \leq |\arg(y)| + |\arg(\zeta)| < \alpha + (\pi/2 - \alpha) < \pi/2 $. Thus $\Re(y\zeta) > 0$ and hence $|e^{-y\zeta}| \leq 1$. 
	\end{minipage}
	\begin{minipage}[b]{0.4\textwidth}
		\centering

\begin{tikzpicture}[scale=2]

	\draw[->] (-0.15,0) -- (2.05,0) node[right] {Re};
	\draw[->] (0,-2) -- (0,2) node[above] {Im};

	\filldraw (0,0) circle (0.03) node[anchor=north east] {0};

	\def\thetaarg{22.5} 
	\def\alphaarg{45} 
	\pgfmathsetmacro{\r}{2 / cos(\thetaarg)}
	\pgfmathsetmacro{\rp}{2 / cos(\alphaarg)}
	\pgfmathsetmacro{\s}{1.0}

	\coordinate (A) at ({\r*cos(\thetaarg)},{\r*sin(\thetaarg)});
	\coordinate (B) at ({\r*cos(-\thetaarg)},{\r*sin(-\thetaarg)});

	\coordinate (C) at ({\rp*cos(\alphaarg)},{\rp*sin(\alphaarg)});
	\coordinate (D) at ({\rp*cos(-\alphaarg)},{\rp*sin(-\alphaarg)});

	\draw[blue, thick] (0,0) -- (A);
	\draw[blue, thick] (0,0) -- (B);

	\draw[red, thick] (0,0) -- (C);
	\draw[red, thick] (0,0) -- (D);

	\draw[blue, thick] (0.6,0) arc (0:\thetaarg:0.6);
	\node at ({0.5*cos(0.5*\thetaarg)}, {0.5*sin(0.5*\thetaarg)}) {$\phi$};

	\draw[red, thick] (0.6,0) arc (0:-\alphaarg:0.6);
	\node at ({0.5*cos(-0.5*\alphaarg)}, {0.5*sin(-0.5*\alphaarg)}) {$\alpha$};

	\node[blue] at ({2.3*cos(0.85*\thetaarg)}, {2.3*sin(0.85*\thetaarg)}) {$\Sigma_{\phi}$};
	\node[red] at ({2.3*cos(0.85*\alphaarg)}, {2.3*sin(0.85*\alphaarg)}) {$\Sigma_{\alpha}$};

	\coordinate (P) at (0.8,0.5);
	\coordinate (Pprime) at (0.8,-0.5);

	\pgfmathsetmacro{\stepX}{\s*cos(\thetaarg)}
	\pgfmathsetmacro{\stepY}{\s*sin(\thetaarg)}
	\pgfmathsetmacro{\stepYprime}{-\s*sin(\thetaarg)}

	\coordinate (Q) at ({0.8+\stepX},{0.5+\stepY});
	\coordinate (Qprime) at ({0.8+\stepX},{-0.5+\stepYprime});

	\node at ($(Q)!0.75!(Qprime) + (0.2,0)$) {$\Gamma$};

	\draw[black, thick,
	postaction={decorate},
	decoration={markings, mark=at position 0.5 with {\arrow{>}}}
	] (Q) -- (P);

	\draw[black, thick,
	postaction={decorate},
	decoration={markings, mark=at position 0.5 with {\arrow{>}}}
	] (P) -- (Pprime);

	\draw[black, thick,
	postaction={decorate},
	decoration={markings, mark=at position 0.5 with {\arrow{>}}}
	] (Pprime) -- (Qprime);

	\draw[black, thick,
	postaction={decorate},
	decoration={markings, mark=at position 0.5 with {\arrow{>}}}
	] (Qprime) -- (Q);

	\filldraw[black] (1.2, 0.25) circle (0.03);
	\filldraw[black] (1.5, 0.15) circle (0.03);
	\filldraw[black] (1.2, -0.25) circle (0.03);
	\filldraw[black] (1.5, -0.15) circle (0.03);

	\node at (1.2, 0.36) {$\lambda_2$};
	\node at (1.5, 0.26) {$\lambda_3$};
	\node at (1.2, -0.36) {${\lambda}_4$};
	\node at (1.5, -0.26) {${\lambda}_5$};

	\draw[black, thick] (0,0.6) arc (90:\alphaarg:0.6);
	\node at ({0.15*cos(0.5*\alphaarg)}, {0.15*sin(0.5*\alphaarg) +0.3}) {$\theta$};

\end{tikzpicture}

		
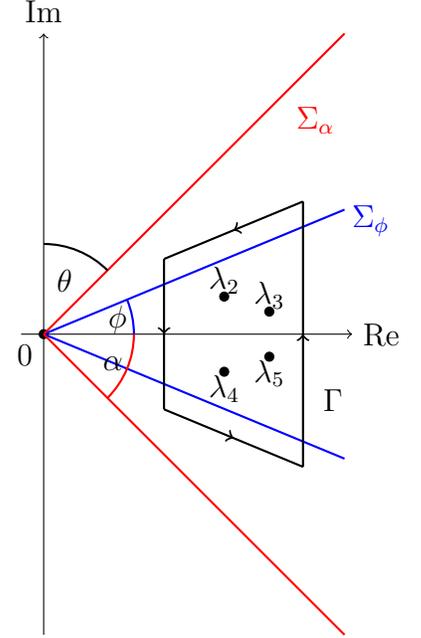
\captionof{figure}{Path $\Gamma$ around eigenvalues 
			$ \sigma(L^-_{ \tilde{E}} + \delta L^-_{E\setminus \tilde{E}}  )\setminus \{0\}$.}
		\label{integration_path}
	\end{minipage}

 \noindent In total we  have thus indeed found that as desired $\forall z \in \Sigma_\theta,\ \beta \uparrow \infty$
\begin{align}
	\left\|e^{-z(\Lin_{E\setminus \tilde{E}} + \beta \Lin_{\tilde{E}})}\right\|  \leq \left(\|P^-_{\Lin_{\tilde{E}} + \delta \Lin_{E\setminus \tilde{E}}}\|  + \epsilon +  \frac{e^{\|\Lin_{\tilde{E}}\|_1/\epsilon + 1}}{2\pi \epsilon} \oint_\Gamma 1 d\zeta \right)=: M.
\end{align} 
\end{proof}

\subsubsection{Out-degree Laplacian} Having understood the in-degree setting, let us now consider the out-degree Laplacian. As above, we first specify the coarsified  graph $\underline{G}^+$:

\begin{Def}\label{deg_coarse_G_def} Given a graph $G = (V, m, E, a)$ and a countable collection $\{\tilde{E}_i\}_{i \in I}$ of clustered edges,
	we define a reduced graph $\underline{G}^+ = \{\underline{V}^+, \underline{m}^+, \underline{E}^+,\underline{a}\}^+$ as follows:
	Define an equivalence relation "$\sim_{\tilde{E}^\intercal}$" on the node-set $V$ by setting $a \sim_{\hat{E}^\intercal} b $ iff
	$a$ and $b$ are contained in the same reach of $(V, \tilde{E}^\intercal)$.
	\begin{enumerate}[label=(\roman*), topsep=0pt, itemsep=-.2em]
		\item 	The reduced node set $\underline{V}^+$, is defined as the set of reaches of ($V, \tilde{E}^\intercal$):
		\begin{align}
			\underline{V}^+ = V/ \sim_{\tilde{E}^\intercal}.
		\end{align}
		\item The weight of a node $S^\intercal \in \underline{V}^+$ is given as
		\begin{align}\label{out_deg_mass}
			\underline{m}^+(S^\intercal) := 	\langle  \chi_{S^\intercal} ,P^+  \chi_{S^\intercal}  \rangle_{\ell^2(G)}
		\end{align}
		\item The edge-weight function $\underline{a}:\underline{V} \times \underline{V} \rightarrow [0, \infty)$ is defined similarly as:
		\begin{align}\label{weight_def_+}
			\underline{a}^+(R^\intercal, S^\intercal) := \left(\sum\limits_{r \in R^\intercal, s \in S^\intercal} 
			[
			\cev{\gamma}^{+}_{R^\intercal}(r)
			a(r,s)
			\vec{\gamma}_{S^\intercal}^{+}(s)
			]\right) \cdot \underline{m}^+(S^\intercal)
		\end{align}
		\item The set of edges $\underline{E}^+$ is given as the support of $\underline{a}^+$ on $\underline{V}^+ \times \underline{V}^+$:
		\begin{equation}
			\underline{E}^+ = \{(R,S)\in\underline{V}\times\underline{V}: \underline{a}^+(R,S) >0 \}.
		\end{equation}
	\end{enumerate}
\end{Def}

\medskip

Also in the out-degree setting, the notion of 'mass associated to a reach' (\ref{out_deg_mass}) reduces to the definition of (\ref{mass_def_undirected}) in the undirected setting. Furthermore, since $\langle \chi_{S^\intercal}, \vec{\gamma}^{+}_{S^{\intercal}}\rangle =1$:
\begin{align}
	\underline{m}^+(S^\intercal) = \langle \cev\gamma^{+}_{S^{\intercal}}, \chi_{S^\intercal} \rangle_{\ell^2(G)}
	=
	\sum\limits_{s \in S} \cev\gamma^{+}_{S^\intercal}(s) m(s) = \sum\limits_{s \in H(S^\intercal)} m(s) + \sum\limits_{s \in C(S^\intercal)} \cev\gamma^{+}_{S}(s) m(s).
\end{align}
With this, we find in the finite mass setting exactly as for the in-degree setting, that
\begin{equation}
	\underline{M}^+ = \sum\limits_{S^\intercal \in \underline{V}} \underline{m}^+(S^\intercal)
	=
	\sum\limits_{S^\intercal \in \underline{V}}
	\left(
	\sum\limits_{s \in H(S^\intercal)} m(s) + \sum\limits_{s \in C(S^\intercal)} \vec\gamma^{+}{S^\intercal}(s) m(s)
	\right)
	= \sum\limits_{s \in V} m(s)
	= M.
\end{equation}

Regarding Definition (\ref{weight_def_+}) pertaining to aggregated edge-weigths, we  note that --- as in the in-degree setting --- our definition exactly reduces to our earlier definition (\ref{edge_def_undir}): In the undirected setting, we have $	\vec{\gamma}^{+}_{S^\intercal} = \chi_{S^\intercal}/(\sum_{v \in R} m(r))$ and $	\cev{\gamma}_{S^\intercal}^{+}= \chi_{S^\intercal}$, and $\underline{m}^{+}(S^\intercal) = \sum_{v \in S^\intercal} m(s)$. Thus
\begin{align}
	\underline{a^+}(R^\intercal, S^\intercal) 
	= 	\left(\sum\limits_{r \in R^\intercal, s \in S^\intercal} 
	\chi_{R^\intercal}(r)
	a(r,s)
	\chi_{S^\intercal}(s) \right) \cdot \frac{\underline{m}^{+}(S^\intercal)}{\underline{m}^{+}(S^\intercal)}
	= 
	\sum\limits_{r \in R, s \in S} 
	a(r,s).
\end{align}

		\medskip

	Again, we we now decompose the Riesz projector onto $0 \in \sigma(\Lout_{\tilde{E}})$ as $P^+ = J^+_\uparrow J^+_\downarrow$. Here $J^+_\downarrow$ maps to  $\ell^2(\underline{G}^+)$, and $J^+_\uparrow$ maps in the opposite direction; back to $\ell^2(G)$.
	\begin{Def}\label{proj_interp_outdeg}
		At the level of Hilbert spaces, we define the bounded projection $J^+_\downarrow: \ell^2(G) \rightarrow \ell^2(\underline{G}^+)$ and interpolation $J^+_\uparrow: \ell^2(\underline{G}^+) \rightarrow \ell^2(G)$ operators as 
		\begin{align}
			[J^+_\downarrow f](R^\intercal) = \langle \cev\gamma^{+}_{R^\intercal} , f \rangle_{\ell^2(V)}/m^+(R^\intercal),\quad \text{and} \quad J^+_\uparrow \underline{f} = \sum_{R^\intercal \in \mathfrak{R}^\intercal} \underline{m}^+(R^\intercal)  \underline{f}(R^\intercal)  \vec\gamma^{+}_{R^\intercal}.
		\end{align}
	\end{Def}

\begin{Rem}	
The masses in
Definition \ref{proj_interp_outdeg} above
  are placed exactly so that  $J_\uparrow^+, J_\downarrow^+$ preserve probability distributions in the following sense:
	\begin{align}
		\sum\limits_{R^\intercal \in \underline{V}^+} [J^+_\downarrow f(R^\intercal)] \underline{m}^+(R^\intercal) 
		=
				\sum\limits_{R^\intercal \in \underline{V}^+}	\langle \cev\gamma^{+}_{R^\intercal} , f \rangle_{\ell^2(V)} \frac{\underline{m}^+(R^\intercal)}{ \underline{m}^+(R^\intercal) } 
		=
			\sum\limits_{R^\intercal \in \underline{V}^+}	\langle \cev\gamma^{+}_{R^\intercal} , f \rangle_{\ell^2(V)}.
		\end{align}
But since the family $\{\cev\gamma^{+}_{R^\intercal}\}_{R^\intercal \in \mathfrak{R}^\intercal}$ forms a partition of unity on $\ell^2(G)$ (i.e. $\sum_{R^\intercal \in \underline{V}^+} \cev\gamma^{+}_{R^\intercal}(j) = 1 $), we have '$\sum\limits_{R^\intercal \in \underline{V}^+}	 \cev\gamma^{+}_{R^\intercal} = \chi_G$', and hence $\sum\limits_{R^\intercal \in \underline{V}^+}	\langle \cev\gamma^{+}_{R^\intercal} , f \rangle_{\ell^2(V)} 
= \sum\limits_{v\in V} f(v) m(v)$.

Similarly, if $\underline{f} \in \ell^2(\underline{G}^-) \cap \ell^1(\underline{G}^-)$ is a probability distribution, we have
\begin{align}
	\sum_{v \in V} [J^+_\uparrow \underline{f}](v) m(v) =&  	\sum_{v \in V} \langle e_v , [J^+_\uparrow \underline{f}]\rangle_{\ell^2(G)}
	=  	\sum_{v \in V}
	 \langle e_v ,	
	\sum_{R^\intercal \in \mathfrak{R}^\intercal} \underline{m}^+(R^\intercal)  \underline{f}(R^\intercal)  \vec\gamma^{+}_{R^\intercal}
	\rangle_{\ell^2(G)}\\
	=& \sum_{R^\intercal \in \mathfrak{R}^\intercal} \underline{m}^+(R^\intercal)      \underline{f}(R^\intercal)         \left( \sum_{v \in V} \langle e_v ,	 \vec\gamma^{+}_{R^\intercal}
	\rangle_{\ell^2(G)} \right).
\end{align}
By Proposition 
\ref{right_kernel_+}  $\sum_{j \in V}\, \langle e_j, \vec\gamma^{+}_{R^\intercal}\rangle =1$, so that the factor in brackets is equal to unity. 
\end{Rem}

	To the graph $\underline{G}^+$ of Definition \ref{deg_coarse_G_def} we can then canonically associate a graph Lapalcian $\underline{L}^+$ via  Definition \ref{laplacian_definition}. As we show now, this is the limit Laplacian, to which the family $\Lout_\beta = \Lout_{E \setminus \tilde{E}} + \beta \Lout_{\tilde{E}}$ converges:
	\begin{Thm}\label{+_conv_theorem}
		We have
$
			\| (\Lout_\beta - z)^{-1} - J^+_{\uparrow} (\underline{L}^+ -z)^{-1}  J^+_\downarrow \|
			\lesssim 1/\beta \xrightarrow{\beta \rightarrow \infty} 0.
$
	\end{Thm}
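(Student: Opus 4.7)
The strategy is to follow the proof of Theorem \ref{-_conv_theorem} verbatim, replacing each in-degree object ($\Lin$, $P^-$, $\vec\gamma^-_R$, $\cev\gamma^-_R$, $J^-_{\uparrow\downarrow}$, $\underline{G}^-$) by its out-degree counterpart. Since $\tilde{E}$ is finite, $0\in\sigma(\Lout_{\tilde{E}})$ is isolated, Lemma \ref{vanishing_nilpotent} applies equally to $\Lout_{\tilde{E}}$, and the Riesz projector $P^+$ is well-defined. The proof splits into the same three ingredients as in the in-degree case, each with an obvious out-degree analogue.

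First I would establish the out-degree analogue of Lemma \ref{aux_lemma_-},
$$\|(\beta\Lout_{\tilde{E}}-z)^{-1}-P^+/(-z)\|\lesssim 1/\beta,$$
whose block-inversion proof uses only $P^+\Lout_{\tilde{E}}=\Lout_{\tilde{E}}P^+=0$ from Lemma \ref{vanishing_nilpotent} and carries over without modification. Second, the Neumann-series sandwich in the proof of Theorem \ref{-_conv_theorem} depends only on boundedness of $\Lout_{E\setminus\tilde{E}}$ and on this resolvent estimate; no self-adjointness is used. The same chain of estimates therefore yields
$$\|(\Lout_\beta-z)^{-1}-P^+(\Lout_{E\setminus\tilde{E}}P^+-z)^{-1}P^+\|\lesssim 1/\beta.$$

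The remaining task is the identification $P^+\Lout_{E\setminus\tilde{E}}P^+=J^+_\uparrow\underline{L}^+J^+_\downarrow$, which reduces to checking $\underline{L}^+=J^+_\downarrow\Lout_{E\setminus\tilde{E}}J^+_\uparrow$. A direct matrix-element computation using Definition \ref{proj_interp_outdeg} gives
$$\langle e_{R^\intercal},J^+_\downarrow\Lout_{E\setminus\tilde{E}}J^+_\uparrow e_{S^\intercal}\rangle_{\ell^2(\underline{G}^+)}=\underline{m}^+(S^\intercal)\,\langle\cev\gamma^+_{R^\intercal},\Lout_{E\setminus\tilde{E}}\vec\gamma^+_{S^\intercal}\rangle_{\ell^2(G)},$$
and for $R^\intercal\neq S^\intercal$ this evaluates to $-\underline{a}^+(R^\intercal,S^\intercal)$ by \eqref{weight_def_+}, matching the off-diagonal entries of $\underline{L}^+$.

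The main obstacle is the diagonal entries, since the row-sums-zero trick from the in-degree proof does not transfer: constants lie in the \emph{left} rather than the right kernel of an out-degree Laplacian. The fix is to use Proposition \ref{prop:leftkernel_+}, which gives the left partition of unity $\sum_{R^\intercal}\cev\gamma^+_{R^\intercal}=\chi_V$; summing the identity above over $R^\intercal$ yields
$$\sum_{R^\intercal}\langle e_{R^\intercal},J^+_\downarrow\Lout_{E\setminus\tilde{E}}J^+_\uparrow e_{S^\intercal}\rangle_{\ell^2(\underline{G}^+)}=\underline{m}^+(S^\intercal)\,\langle\chi_V,\Lout_{E\setminus\tilde{E}}\vec\gamma^+_{S^\intercal}\rangle_{\ell^2(G)}=0,$$
since $\chi_V$ lies in the left kernel of any out-degree Laplacian. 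The same column-sum identity holds for $\underline{L}^+$, so matching off-diagonals plus matching column sums force matching diagonals. The delicate point in all of this is tracking where factors $\underline{m}^+(R^\intercal)$ versus $\underline{m}^+(S^\intercal)$ appear: Definition \ref{proj_interp_outdeg} places the mass inside $J^+_\uparrow$ rather than inside $J^+_\downarrow$ (the reverse of the in-degree Definition \ref{proj_interp_indeg}), which is precisely what lets the weights in \eqref{weight_def_+} cancel cleanly and reflects the in-/out-asymmetry at the level of the effective graph.
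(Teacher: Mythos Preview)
Your proposal is correct, and the identification step $\underline{L}^+=J^+_\downarrow\Lout_{E\setminus\tilde{E}}J^+_\uparrow$ via column sums using the left partition of unity $\sum_{R^\intercal}\cev\gamma^+_{R^\intercal}=\chi_V$ is exactly what the paper does. There is, however, a genuine difference in how the resolvent estimate itself is obtained.

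You redo the entire Neumann-series argument from the proof of Theorem \ref{-_conv_theorem} with $\Lout$ in place of $\Lin$; this works, since nothing in that argument used self-adjointness. The paper instead takes a shortcut via Proposition \ref{in_out_transpose_prop}: since $\Lout_{G,\beta}=[\Lin_{G^\intercal,\beta}]^\star$ and $P^+=[P^-_{G^\intercal}]^\star$, applying the already-proved Theorem \ref{-_conv_theorem} to the transposed graph $G^\intercal$ and then taking adjoints (which preserves operator norm) immediately gives
\[
\bigl\|(\Lout_{G,\beta}-z)^{-1}-P^+(P^+\Lout_{E\setminus\tilde{E}}P^+-z)^{-1}P^+\bigr\|\lesssim 1/\beta.
\]
Your route is self-contained and makes explicit that the perturbative machinery is orientation-agnostic; the paper's route is shorter and highlights the structural duality between the $+$ and $-$ settings. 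Both arrive at the same place, and the second half of the proof (the matrix-element identification, including your careful observation about where the factor $\underline{m}^+(S^\intercal)$ versus $\underline{m}^+(R^\intercal)$ lands) coincides with the paper's.
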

	In order to prove this result we first again establish an auxiliary Lemma:
	\begin{Lem}\label{aux_lemma_+}
		Let $\Lout_{\hat{E}}$	be an out-degree Laplacian with finite range. Denote by $P^+ = J^+_\uparrow J^+_\downarrow$ the Riesz projection onto $0 \in \sigma(\Lout_{\hat{E}})$. We have
$
			\|  (\beta \Lout_{\tilde{E}} - z)^{-1} - P^+/(-z)  \| \lesssim 1/\beta.
	$
	\end{Lem}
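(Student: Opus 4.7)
The plan is to mirror the proof of Lemma \ref{aux_lemma_-} step by step, replacing in-degree by out-degree ingredients throughout. Two structural inputs are required: that $0\in\sigma(\Lout_{\tilde E})$ is an isolated point of the spectrum (so the Riesz projector $P^+$ of (\ref{riesz_projector}) is well defined), and that the nilpotent part of $\Lout_{\tilde E}$ at $0$ vanishes. The first is immediate, since $\tilde E$ is finite by Definition \ref{directed_clusters}, so $\Lout_{\tilde E}$ has essentially finite-dimensional range and $0$ is a fortiori an isolated spectral point. The second follows directly from Lemma \ref{vanishing_nilpotent} applied to the out-degree Laplacian, yielding $P^+\Lout_{\tilde E}=\Lout_{\tilde E}P^+=P^+\Lout_{\tilde E}P^+=0$.

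Setting $Q^+=I-P^+$, these identities imply that both $\Range(P^+)$ and $\Range(Q^+)$ are invariant under $\Lout_{\tilde E}$, on which $\Lout_{\tilde E}$ acts as zero and as $Q^+\Lout_{\tilde E}Q^+$ respectively. Relative to the direct-sum decomposition $\ell^2(G)=\Range(P^+)\oplus\Range(Q^+)$, the operator $\beta\Lout_{\tilde E}-z$ is therefore block-diagonal, and block inversion gives
\begin{align}
(\beta\Lout_{\tilde E}-z)^{-1}-P^+/(-z) \;=\; Q^+\bigl(\beta Q^+\Lout_{\tilde E}Q^+-zQ^+\bigr)^{-1}Q^+ .
\end{align}
A Neumann-series argument identical to the one at the end of the proof of Lemma \ref{aux_lemma_-} then bounds the norm of the right-hand side by $2\,\|[Q^+\Lout_{\tilde E}Q^+]^+\|/\beta$ for $\beta\gg 1$ sufficiently large, with $[\,\cdot\,]^+$ the Penrose pseudo-inverse. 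This delivers the claimed $1/\beta$ estimate.

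The only point requiring genuine care, compared with the undirected case, is that $P^+$ is generically \emph{not} an orthogonal projection, so no self-adjointness argument justifies the block decomposition. This is precisely where Lemma \ref{vanishing_nilpotent} does the heavy lifting: the vanishing of the nilpotent part at $0$ ensures that no cross-terms $P^+\Lout_{\tilde E}Q^+$ or $Q^+\Lout_{\tilde E}P^+$ survive, so the decomposition is truly block-diagonal rather than merely triangular. Once this algebraic structure is in place, the remainder of the argument is a routine spectral estimate on the finite-dimensional subspace $\Range(Q^+)\cap\ell^2(\tilde V)$, where $Q^+\Lout_{\tilde E}Q^+$ is invertible by construction.
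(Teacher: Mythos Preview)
Your proof is correct and follows exactly the approach the paper intends: the paper's own proof simply states that it ``proceeds in complete analogy to that of Lemma~\ref{aux_lemma_-}'', and you have faithfully spelled out that analogy, including the crucial reliance on Lemma~\ref{vanishing_nilpotent} to kill the off-diagonal blocks.
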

	\begin{proof} The proof proceeds in complete analogy to that of Lemma \ref{aux_lemma_-}.
	\end{proof}

	\begin{proof}[Proof of Theorem \ref{-_conv_theorem}]
		Let us use the notation  $\Lout_{G, \beta} =  \Lout_{E \setminus \tilde{E}} +\beta \Lout_{\tilde{E}}$ and $\Lin_{G^\intercal, \beta} =  \Lin_{E^\intercal  \setminus \tilde{E}^\intercal } +\beta \Lin_{\tilde{E}^\intercal }$. Furthermore note that for $P^+$ the Riesz projector onto $0 \in \sigma(\Lout_{\tilde{E}})$
		and $P^-$ the Riesz projector onto $0 \in \sigma(\Lin_{\tilde{E}^\intercal})$, we have the adjoint-ness relation $P^- = [P^+]^\star$. This follows from the defining equation (\ref{riesz_projector}) together with Proposition 
		\ref{in_out_transpose_prop}.
		
By Proposition \ref{in_out_transpose_prop}, we then have $
	[(\Lout_{G, \beta} - z )^{-1}]^\star = (\Lin_{G^\intercal,\beta } - \overline{z})^{-1}$, and by the proof of Theorem \ref{-_conv_theorem}, we have
	$\|(\Lin_{G^\intercal,\beta } - \overline{z})^{-1} -  P^-  ( P^-  \Lin_{G^\intercal,\beta } P^- - \overline{z})^{-1}     P^-     \|  \lesssim 1/\beta$. Since $\|\cdot\|$ is invariant under taking adjoints, also $\|(\Lout_{G,\beta } - z)^{-1} -  P^+  ( P^+  \Lout_{G^+,\beta } P^+ - z)^{-1}     P^+     \|  \lesssim 1/\beta$.

Since $P  \Lout_{G,\beta } P = P\Lout_{E \setminus \tilde{E}} P $, it remains to establish that
$P\Lout_{E \setminus \tilde{E}} P = J^+_{\uparrow} \underline{L}^+ J^+_{\downarrow}$. 	
This follows after noting $\underline{L}^+ = J^+_{\downarrow} \Lout_{E\setminus \tilde{E}}  J^+_\uparrow$. Indeed:
\begin{align}
\langle e_{R^\intercal}, J^+_{\downarrow} \Lout_{E\setminus \tilde{E}}  J^+_\uparrow e_{S^\intercal}\rangle_{\ell^2(\underline{G}^+)} = \langle e_{R^\intercal}, e_{R^\intercal}\rangle_{\ell^2(\underline{G}^+)}\cdot
\langle\cev{\gamma}^+_{R^\intercal},  \Lout_{E\setminus \tilde{E}} \vec{\gamma}^+_{S^\intercal} \rangle_{\ell^2(G)}.
\end{align}
We have $ \langle e_{R^\intercal}, e_{R^\intercal}\rangle_{\ell^2(\underline{G}^+)} =  \underline{m}^+(R^\intercal) $.  
For $\langle\cev{\gamma}^+_{R^\intercal},  \Lout_{E\setminus \tilde{E}} \vec{\gamma}^+_{S^\intercal} \rangle_{\ell^2(G)}$, we find that if $R^\intercal \neq S^\intercal$:
\begin{align}
	\langle\cev{\gamma}^+_{R^\intercal},  \Lout_{E\setminus \tilde{E}} \vec{\gamma}^+_{S^\intercal} \rangle_{\ell^2(G)} 
	= - 	\frac{\underline{m}^+(S^\intercal)}{\underline{m}^+(R^\intercal)}\sum\limits_{r \in R^\intercal, s \in S^\intercal} 
	[
	\cev{\gamma}^{+}_{R^\intercal}(r)
	a(r,s)
	\vec{\gamma}_{S^\intercal}^{+}(s)
	].
\end{align}
Thus the off-diagonal elements of $J^+_{\downarrow} \Lout_{E\setminus \tilde{E}}  J^+_\uparrow $ agree with those of $\underline{\Lout}$ as associated via Definition \ref{laplacian_definition} to the graph $\underline{G}^+$ determined by Definition \ref{indeg_coarse_G_def}.

To see that also the diagonal entries of the two operators agree, we note that for fixed $R^\intercal$, we have (since by Proposition \ref{prop:leftkernel_+} the $\{\cev{\gamma}_{R^\intercal}\}_{R^\intercal \in \mathfrak{R}}$ are a  partition of unity)
\begin{align}
	\sum_{R^\intercal \in \underline{V}} \langle e_{R^\intercal}, J^+_{\downarrow} \Lout_{E\setminus \tilde{E}}  J^+_\uparrow e_{S^\intercal}\rangle_{\ell^2(\underline{G}^+)} 
	=   \sum_{R^\intercal \in \underline{V}} \langle\cev{\gamma}^+_{R^\intercal},  \Lout_{E\setminus \tilde{E}} \vec{\gamma}^+_{S^\intercal} \rangle_{\ell^2(G)}  \underline{m}^+(S^\intercal)
	= 0
\end{align}
Thus again, we indeed find $J^+_{\downarrow} \Lout_{E\setminus \tilde{E}}  J^+_\uparrow = \underline{L}^+$, since
\begin{align}
	-\langle e_{S^\intercal}, J^+_{\downarrow} \Lout_{E\setminus \tilde{E}}  J^+_\uparrow e_{S^\intercal}\rangle_{\ell^2(\underline{G}^+)}  
	= \sum_{\substack{R^\intercal \in \underline{V} \\ R^\intercal \neq S^\intercal}} \langle e_{R^\intercal}, J^+_{\downarrow} \Lout_{E\setminus \tilde{E}}  J^+_\uparrow e_{S^\intercal}\rangle_{\ell^2(\underline{G}^+)}  
	= \sum_{\substack{R^\intercal \in \underline{V} \\ R^\intercal \neq S^\intercal}} \langle e_{R^\intercal}, \underline{L}^+ e_{S^\intercal}\rangle_{\ell^2(\underline{G}^+)}.
\end{align}
	\end{proof}

	From the proof of the above theorem, we immediately find again as in in the in-degree setting that 
	the Laplacian on $\tilde{E}$ need not diverge linearly and find:
	
	\begin{Cor}
		Instead of linear scaling, consider a modification $a \mapsto a_\beta$ on $\tilde{E}$ so that for some projection $P$ onto the kernel of a fixed auxiliary Laplacian on $(V,\tilde{E})$. Then  the  Laplacian $\Lout_{\tilde{E}, \beta}$ on $\tilde{E}$ satisfies $\| (\Lout_{\tilde{E},\beta} -z)^{-1} - P^+/(-z)\| \rightarrow 0$. Then
		\begin{align}
			\| (\Lout_\beta - z)^{-1} - J^-_{\uparrow} (\underline{L}^+ -z)^{-1}  J^+_\downarrow \|\leq C \| (\Lin_{\tilde{E},\beta} -z)^{-1} - P^+/(-z)\| \xrightarrow{\beta \rightarrow \infty} 0.
		\end{align}
	\end{Cor}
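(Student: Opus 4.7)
The plan is to mirror the proof of Theorem \ref{+_conv_theorem} and identify the single place where the linear-scaling hypothesis actually entered. In that proof, linearity in $\beta$ was used only through Lemma \ref{aux_lemma_+}, which supplied the bound $\|(\beta\Lout_{\tilde E}-z)^{-1} - P^+/(-z)\| \lesssim 1/\beta$. Under the hypothesis of the present corollary, this bound is replaced \emph{verbatim} by the assumed convergence $\|(\Lout_{\tilde E,\beta} - z)^{-1} - P^+/(-z)\|\to 0$. All other steps, being purely algebraic and independent of the detailed $\beta$-dependence, should transfer unchanged.

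I would execute this either directly or via duality. The duality route is cleanest: since $[(\Lout_{G,\beta}-z)^{-1}]^\star = (\Lin_{G^\intercal,\beta}-\bar z)^{-1}$ by Proposition \ref{in_out_transpose_prop}, and since $[P^+]^\star$ equals the Riesz projector $P^-$ for $\Lin_{\tilde E^\intercal,\beta}$ (again by \eqref{riesz_projector} and Proposition \ref{in_out_transpose_prop}), the hypothesis on $\Lout_{\tilde E,\beta}$ translates under $\star$ into the hypothesis on $\Lin_{\tilde E^\intercal,\beta}$ needed to apply the in-degree corollary immediately preceding Theorem \ref{+_conv_theorem}, now on the transposed graph $G^\intercal$. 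That corollary yields
\begin{equation}
\left\|(\Lin_{G^\intercal,\beta}-\bar z)^{-1} - J^-_\uparrow(\underline{L}^- - \bar z)^{-1}J^-_\downarrow\right\| \;\leq\; C\,\left\|(\Lin_{\tilde E^\intercal,\beta}-\bar z)^{-1} - P^-/(-\bar z)\right\|.
\end{equation}
Taking $\star$ on both sides and using that $\|\cdot\|$ is $\star$-invariant produces the claim, once we match the pieces on the left.

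The remaining verification is an identification of adjoint structures: $[J^-_\downarrow]^\star$ and $[J^-_\uparrow]^\star$ (computed on $G^\intercal$) must coincide, modulo the mass normalizations in Definitions \ref{proj_interp_indeg} and \ref{proj_interp_outdeg}, with $J^+_\uparrow$ and $J^+_\downarrow$ on $G$; and the transpose of the reduced in-degree Laplacian $\underline{L}^-$ on $\underline{G^\intercal}$ must coincide with $\underline{L}^+$ on $\underline{G}^+$. Both identifications are straightforward bookkeeping consequences of Definitions \ref{indeg_coarse_G_def}, \ref{deg_coarse_G_def}, the kernel characterizations of Section \ref{directed_kernels}, and the mass formulas \eqref{in_deg_mass}, \eqref{out_deg_mass}.

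The main obstacle I anticipate is precisely this last normalization bookkeeping: the in-degree side places $\underline{m}^-(R)$ implicitly inside $\cev{\gamma}^-_R$ (as noted in Definition \ref{proj_interp_indeg}), whereas the out-degree side makes $\underline{m}^+(R^\intercal)$ appear explicitly in both $J^+_\downarrow$ and $J^+_\uparrow$. Checking that these factors cancel correctly under $\star$ is the only genuinely non-automatic step. If one prefers to sidestep duality entirely, the alternative is to rerun the Neumann-series / block-inverse manipulations from the proof of Theorem \ref{-_conv_theorem} directly for $\Lout_\beta$, substituting $\|(\Lout_{\tilde E,\beta}-z)^{-1}-P^+/(-z)\|$ for the $1/\beta$ estimate wherever Lemma \ref{aux_lemma_+} was invoked; in that presentation the identification $P^+\Lout_{E\setminus\tilde E}P^+ = J^+_\uparrow \underline{L}^+ J^+_\downarrow$ is already done at the end of the proof of Theorem \ref{+_conv_theorem}, so nothing further needs to be checked.
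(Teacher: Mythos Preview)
Your proposal is correct and matches the paper's reasoning: the paper states this corollary follows immediately ``from the proof of the above theorem,'' and you have correctly identified that linearity enters only through Lemma \ref{aux_lemma_+}, which is replaced wholesale by the hypothesis. Your direct route at the end---rerunning the Neumann-series estimates with the assumed convergence in place of the $1/\beta$ bound, and using the already-established identification $P^+\Lout_{E\setminus\tilde E}P^+ = J^+_\uparrow \underline{L}^+ J^+_\downarrow$---is exactly what the paper intends, and avoids the normalization bookkeeping you flag as the main obstacle in the duality route.
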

\noindent The same caveat on generality of this result as in the in-degree setting applies.
	
	\medskip

\noindent		\begin{minipage}[b]{0.7\textwidth}
	\begin{Ex}\label{dir_two_edges_div_out}	Let us revisit the setting of Examples \ref{one_edge_div} \& \ref{dir_two_edges_div}:
		We again consider an original graph $G$ on three nodes. 
		The out-degree Laplacian $\beta \Lout_{\tilde{E}}$ is then given as 
		\begin{align}
			\beta	\Lout_{\tilde{E}} =   \beta  M^{-1}		\begin{pmatrix}
				0 & 0 & 0 \\
				0& \rho & -\eta \\
				0 & -\rho & \eta
			\end{pmatrix}.
		\end{align}
		The right kernel of $	\Lout_{\tilde{E}} $ is spanned by  $ \vec{\gamma}^{+}_{R^\intercal_1} = (1/m(1),0,0)^\intercal$, $\vec{\gamma}^{+}_{R^\intercal_2} =  (0,\eta,\rho)^\intercal/[m(2) \eta + m(3) \rho]$. A basis of the left kernel is given by $\cev{\gamma}^+_{R\intercal_1} = (1, 0 , 0)^\intercal$  and $\cev{\gamma}^+_{R^\intercal_2} = (0, 1 , 1)^\intercal$. \noindent For the masses of the respective reaches we find $\underline{m}^-(R^\intercal_1) = m(a)$ and $\underline{m}^-(R^\intercal_2) = m(b) + m(c)$. 
	\end{Ex}
\end{minipage}
\begin{minipage}[b]{0.4\textwidth}
	\centering
	\begin{minipage}[b]{0.6\textwidth}
		\raggedleft
		\begin{tikzpicture}
			\node[draw, circle, minimum size=12pt, inner sep=0pt] (a) at (1,1.73) {$1$};
			\node[draw, circle, minimum size=12pt, inner sep=0pt] (b) at (0,0) {$2$};
			\node[draw, circle, minimum size=12pt, inner sep=0pt] (c) at (2,0) {$3$};

			\draw[->] (a) to node[right] {} (b); 
			\draw[->, bend left] (b) to node[left] {} (a); 
			\draw[red,->, bend right] (b) to node[black, below] {$\rho\beta$} (c); 
			\draw[red,->, bend right] (c) to node[black, above] {$\eta \beta$} (b); 
			\draw[->, bend left] (a) to node[right] {} (c); 
			\draw[->] (c) to node[left] {} (a); 
		\end{tikzpicture}
		\par\vspace{0.0ex}
		\centering{(a) $G^\intercal$}
	\end{minipage}
	\hfill
	\begin{minipage}[b]{0.35\textwidth}
		\raggedright
		\begin{tikzpicture}
			\node[draw, circle, minimum size=12pt, inner sep=0pt] (a) at (1,1.73) {$R_1$};
			\node[draw, circle, minimum size=12pt, inner sep=0pt] (bc) at (1,0) {$R_2$};  
			
			\draw[->, bend left] (a) to node[right] {} (bc); 
			\draw[->, bend left] (bc) to node[left] {} (a); 
		\end{tikzpicture}
		\par\vspace{3.5ex}
		(b) $\underline{G}^-$
	\end{minipage}
	
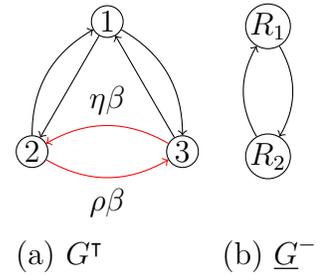
\captionof{figure}{Graph $G$ transposed, and reduced graph $\underline{G}^+$.}
	\label{fig:graphs_dir_gen}
\end{minipage}
\noindent For the edge weights of the graph $\underline{G}^+$, we find:
\begin{align}
	\underline{a}^+(R^\intercal_1, R^\intercal_2) :=& \sum\limits_{r \in R^\intercal_1, s \in R^\intercal_2} 
\cev{\gamma}^{+}_{R^\intercal_1}(r)
a(r,s)
\vec{\gamma}_{R^\intercal_2}^{+}(s)
 \cdot \underline{m}^+(R^\intercal_2)\\
  =& (a(1,2) \eta + a(1,3) \rho) \frac{m(2) + m(3)}{\eta m(2) + \rho m(3)},\\
  	\underline{a}^+(R^\intercal_2, R^\intercal_1) :=& \sum\limits_{r \in R^\intercal_2, s \in R^\intercal_1} 
  \cev{\gamma}^{+}_{R^\intercal_2}(r)
  a(r,s)
  \vec{\gamma}_{R^\intercal_1}^{+}(s)
  = a(2,1) + a(3,1).
\end{align}
Comparing 
 with
Example \ref{dir_two_edges_div}, we find that generically $\underline{G}^- \neq \underline{G}^+$.

\medskip

As in the in-degree setting, one finally establishes:
\begin{Thm}\label{directed_semi_group_convergence_+}
	In the setting of Theorem \ref{+_conv_theorem} with $G$ a finite graph, we have for any $t  > 0$ the convergence 
	$\|e^{-t\Lout_\beta} - J^+_\uparrow  e^{-t\underline{L}^+} J^+_\downarrow\| \rightarrow 0$ as $\beta \rightarrow \infty$.
\end{Thm}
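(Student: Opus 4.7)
The plan is to reduce Theorem \ref{directed_semi_group_convergence_+} to the already-proved in-degree version (Theorem \ref{directed_semi_group_convergence}) by taking adjoints. The key observation is that, by Proposition \ref{in_out_transpose_prop}, the operator families $\Lout_{G,\beta}$ and $\Lin_{G^\intercal,\beta}$ are Hilbert-space adjoints of one another on the common space $\ell^2(G)=\ell^2(G^\intercal)$, so $e^{-t\Lout_{G,\beta}} = [e^{-t\Lin_{G^\intercal,\beta}}]^\star$ for every $t\ge 0$.

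First, I would apply Theorem \ref{directed_semi_group_convergence} to the (finite) transposed graph $G^\intercal$ equipped with the transposed cluster family $\{\tilde E_i^\intercal\}_{i\in I}$, obtaining
\begin{align*}
\big\| e^{-t\Lin_{G^\intercal,\beta}} - J^-_{\uparrow,G^\intercal}\, e^{-t\,\underline{L}^-_{G^\intercal}}\, J^-_{\downarrow,G^\intercal} \big\| \longrightarrow 0 \qquad (\beta\to\infty),
\end{align*}
where the reduced graph $\underline{G^\intercal}^-$ is built from the reaches of $(V,\tilde E^\intercal)$ -- exactly the index set that defines $\underline{G}^+$ in Definition \ref{deg_coarse_G_def}. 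Passing to adjoints and using that the operator norm is isometric under adjunction then yields
\begin{align*}
\big\| e^{-t\Lout_{G,\beta}} - \bigl[J^-_{\uparrow,G^\intercal}\, e^{-t\,\underline{L}^-_{G^\intercal}}\, J^-_{\downarrow,G^\intercal}\bigr]^\star \big\| \longrightarrow 0.
\end{align*}

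The remaining task is to identify this adjoint with $J^+_\uparrow e^{-t\underline{L}^+} J^+_\downarrow$. Comparing the normalizations of Propositions \ref{right_kernel_Lin}, \ref{prop:leftkernel_-}, \ref{right_kernel_+}, and \ref{prop:leftkernel_+} (each normalized so that certain row/column sums equal one), one obtains the pointwise equalities $\vec\gamma^-_{G^\intercal,R^\intercal}=\cev\gamma^+_{R^\intercal}$ and $\cev\gamma^-_{G^\intercal,R^\intercal}=\vec\gamma^+_{R^\intercal}$ as elements of $\ell^2(G)$, and consequently $\underline{m}^-_{G^\intercal}(R^\intercal)=\underline{m}^+(R^\intercal)$, so that $\ell^2(\underline{G^\intercal}^-)$ and $\ell^2(\underline{G}^+)$ coincide as weighted Hilbert spaces. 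A direct inner-product calculation then gives the adjoint relations $[J^-_{\downarrow,G^\intercal}]^\star = J^+_\uparrow$ and $[J^-_{\uparrow,G^\intercal}]^\star = J^+_\downarrow$. Taking adjoints of the factorization $\underline{L}^-_{G^\intercal} = J^-_{\downarrow,G^\intercal}\,\Lin_{G^\intercal,E^\intercal\setminus\tilde E^\intercal}\,J^-_{\uparrow,G^\intercal}$ established in the proof of Theorem \ref{-_conv_theorem} yields $[\underline{L}^-_{G^\intercal}]^\star = J^+_\downarrow\,\Lout_{E\setminus\tilde E}\,J^+_\uparrow = \underline{L}^+$ (using the factorization from the proof of Theorem \ref{+_conv_theorem}), and hence $[e^{-t\underline{L}^-_{G^\intercal}}]^\star = e^{-t\underline{L}^+}$.

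The hard part is essentially the bookkeeping in the previous paragraph: the mass factors $\underline{m}^+(R^\intercal)$ appear asymmetrically in Definitions \ref{proj_interp_indeg} and \ref{proj_interp_outdeg} -- present in $J^+_\uparrow$ and $J^+_\downarrow$ but absent in $J^-_\uparrow$ and $J^-_\downarrow$ -- but they are placed precisely so as to serve as the Jacobians relating the two weighted inner products, so the stated adjoint relations hold without any residual constant. Once this verification is complete, combining the three pieces gives $[J^-_{\uparrow,G^\intercal}\, e^{-t\underline{L}^-_{G^\intercal}}\, J^-_{\downarrow,G^\intercal}]^\star = J^+_\uparrow\, e^{-t\underline{L}^+}\, J^+_\downarrow$, which together with the adjoint-norm identity above completes the proof.
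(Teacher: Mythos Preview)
Your proposal is correct and takes a genuinely different route from the paper. The paper does not give an explicit proof; it simply states ``As in the in-degree setting, one finally establishes'' the theorem, meaning the intended argument is to re-run the entire proof of Theorem~\ref{directed_semi_group_convergence} with $\Lin$ replaced by $\Lout$ and reaches of $(V,\tilde E)$ replaced by reaches of $(V,\tilde E^\intercal)$.

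Your approach instead reduces the out-degree statement to the already-proved in-degree one by applying Theorem~\ref{directed_semi_group_convergence} to $G^\intercal$ and then taking adjoints via Proposition~\ref{in_out_transpose_prop}. This is more economical: rather than repeating a fairly long analytic argument (sectoriality, uniform holomorphic bounds, contour estimates, the degenerate-semigroup machinery of \cite{arendt2001approximation}), you only need to verify the purely algebraic adjoint identities $[J^-_{\downarrow,G^\intercal}]^\star=J^+_\uparrow$, $[J^-_{\uparrow,G^\intercal}]^\star=J^+_\downarrow$, and $[\underline L^-_{G^\intercal}]^\star=\underline L^+$, all of which follow from the normalization conventions of Propositions~\ref{right_kernel_Lin}--\ref{prop:leftkernel_+} and the identifications $\vec\gamma^-_{G^\intercal,R^\intercal}=\cev\gamma^+_{R^\intercal}$, $\cev\gamma^-_{G^\intercal,R^\intercal}=\vec\gamma^+_{R^\intercal}$. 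One small point: the transposed family $\{\tilde E_i^\intercal\}$ need not itself satisfy condition~(ii) of Definition~\ref{directed_clusters} (reaches are not stable under transposition), but this is harmless since only the total set $\tilde E^\intercal$ enters, and the Remark after Definition~\ref{directed_clusters} guarantees it can be re-decomposed into its own reaches --- which is exactly the node set of $\underline G^+$. The paper's route is more self-contained; yours avoids duplication and makes the $\pm$-duality structurally transparent.
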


	\bibliographystyle{unsrt}
	\bibliography{bibfile}
\end{document}